\def\lplus{\,\rotatebox[]{-90}{$\pm$}\,}
\def\lminus{\vdash}
\def\lvtimes{\vec{\ltimes}}
\def\lvminus{\vec{\vdash}}
\def\lvplus{\vec{\lplus}}
\def\J{{\bf 1}}
\DeclareMathOperator{\Col}{Col}
\DeclareMathOperator{\Row}{Row}
\DeclareMathOperator{\argmin}{argmin}
\DeclareMathOperator{\lcm}{lcm}
\def\cal{\mathcal}
\def\pa{\partial}
\def\ra{\rightarrow}
\def\lra{\leftrightarrow}
\def\a{\alpha}
\def\b{\beta}
\def\D{\Delta}
\def\0{{\bf 0}}
\newcommand{\R}{{\mathbb R}}
\newcommand{\N}{{\mathbb N}}
\def\dsum{\mathop{\sum}\limits}
\newtheorem{thm}{Theorem}[section]
\newtheorem{dfn}[thm]{Definition}
\newtheorem{prp}[thm]{Proposition}
\newtheorem{exa}[thm]{Example}
\newtheorem{lem}[thm]{Lemma}
\newtheorem{cor}[thm]{Corollary}
\newtheorem{rem}[thm]{Remark}
\newtheorem{proof}[thm]{Proof}
\begin{document}

\title{Equivalence-Based Model of Dimension-Varying Linear Systems}

\author{Daizhan Cheng,~\IEEEmembership{Fellow,~IEEE},~~Zhenhui Xu,~~and Tielong Shen,~\IEEEmembership{Member,~IEEE}
\thanks{This work is supported partly by the National Natural Science Foundation of China (NSFC) under Grants 61733018, 61333001, and 61773371.}
\thanks{Daizhan Cheng is with the Key Laboratory of Systems and Control, Academy of Mathematics and Systems Sciences, Chinese Academy of Sciences,
		Beijing 100190, P. R. China (e-mail: dcheng@iss.ac.cn).}
\thanks{Zhenhui Xu and Tielong Shen are with Dept. of Mechanics, Sophia University, Japan  (e-mail: tetu-sin@sophia.ac.jp;~xuzhenhui@eagle.sophia.ac.jp).}
    \thanks{Corresponding author: Tielong Shen. Tel.: +81-3-32383308.}
}

\markboth{IEEE Transactions On Automatic Control, Vol. XX, No. Y, Month, 201Z}
{Cheng \MakeLowercase{\textit{et al.}}:  Dimension-Varying Linear Systems}

\maketitle

\begin{abstract}
 Dimension-varying linear systems are investigated. First, a dimension-free state space is proposed. A cross dimensional distance is constructed to glue vectors of different dimensions together to form a cross-dimensional topological space. This distance leads to projections over different dimensional Euclidean spaces and the corresponding linear systems on them, which provide a connection among linear systems with different dimensions. Based on these projections, an equivalence of vectors and an equivalence of matrices over different dimensions are proposed. It follows that the dynamics on quotient space is obtained, which provides a proper model for cross-dimensional systems. Finally, using lifts of dynamic systems on quotient space to Euclidean spaces of different dimensions, a cross-dimensional model is proposed to deal with the dynamics of dimension-varying process of linear systems. On the cross-dimensional model a control is designed to realize the transfer between models on Euclidean spaces of different dimensions.
\end{abstract}

\begin{IEEEkeywords}
Dimension-varying linear (control) system, dimension-free state space,  s-system, quotient space, dimension transient process.
\end{IEEEkeywords}

\IEEEpeerreviewmaketitle

\section{Preliminaries}

Dimension-varying systems (DVSs) appear in various complex systems. Roughly speaking, there are two different kinds of DVSs. First kind of  DVSs have continuously time-varying dimensions. For example, (i) large-scale networks such as internet, where users are joining and withdrawing from time to time  \cite{pas04}; (ii) cellular networks, where cells are birthing and dying any time \cite{vid11}; modeling of biological systems \cite{xu81,hua95}, etc. are of this kind of DVSs.
Second kind of DVSs have ``short periods" of time-varying process. Examples are (i) docking, undocking, departure, and joining of spacecrafts \cite{yan14,jia14}, (ii) vehicle clutch system, etc. A classical way to deal with dimension-varying systems is switching \cite{yan14}. Recently, a development in optimal control of hybrid systems also involves state space and control space dimension changes, where the control design is also based on switching \cite{pak17,pak17b}. This approach ignores the dynamics of the system during the dimension-varying process. It is obviously not applicable to first kind of DVSs. Even for second kind of DVSs, the transient period may be long enough so that the dynamics during this process can not be ignored.  For instance, automobile clutch takes about $1$ second to complete a connection or separation, docking/undocking of spacecrafts takes even longer. Investigating the dynamics and designing control for transient process of dimension-varying systems can improve the performance of mechanical or other systems.

To our best knowledge, there is no proper theory or technique to model the dimension-varying dynamic process.  This paper attempts to explore the dynamic and control of dimension-varying process. It is applicable to the first kind of DVSs and the transient process of second kind of DVSs.  First of all, the dimension-free state space is introduced. A hybrid vector space structure is posed to it, and an inner product is obtained, which is then used to deduce norm and distance of the state space.  The distance makes  vectors of different dimensions (i.e., different dimensional Euclidean spaces) into a connected  topological space. As a consequence, this connection also connected linear systems with state spaces of different dimensions.

The cross-dimensional distance glues some vectors of different dimensions together (i.e., vectors with zero distance in-between), which leads to an equivalence relation on the cross-dimensional topological space. Based on this equivalence relation, a quotient space is obtained.
Linear (control) systems on quotient space is also defined and discussed. Finally, by lifting dynamic systems on quotient space to different dimensional Euclidean spaces, the dynamics for dimension-varying process of cross-dimensional systems is modeled. Then a technique is proposed to design a control to realize the required transient process.

The rest of this paper is organized as follows: Section 2 proposes a dimension free state space. First, a pseudo-vector space structure and a distance are proposed to make Euclidean spaces of different dimensions a path-wise connected topological space. Then a projection among different dimensional Euclidean spaces is discussed. Third, the vector space projection is used to deduce a projection of linear systems on different dimensional spaces. Section 3 considers cross-dynamic linear dynamic systems. First, a general model of dynamic systems on dimension free state space is discussed. Then an equivalence relation on dimension free state space is proposed. It is essentially deduced from the distance. an equivalence of matrices of different dimensions is also proposed, which is motivated by the projection of linear systems. Using these two equivalences, the corresponding quotient space is obtained, which is a standard vector space and Hausdarff topological space. Then linear (control) systems on quotient space are properly defined. In Section 4, by lifting a linear system on quotient space to Euclidean spaces of different dimensions, the dynamics of dimension-varying process of linear control systems is modeled. A technique is proposed to design required controls to realize the dimension transient process. Finally, some examples are presented to illustrate the proposed theory and related design technique.

Before ending this section we list some notations:

\begin{enumerate}

\item $\R$: Field of real numbers;
\item ${\cal M}_{m\times n}$: set of $m\times n$ dimensional real matrices.

\item $\Col(A)$ ($\Row(A)$): the set of columns (rows) of ~$A$; $\Col_i(A)$ ($\Row_i(A)$): the $i$-th column (row) of ~$A$.

\item One-entry vector: $\J_n=[\underbrace{1,\cdots,1}_n]^T$.

\item One-entry matrix: $\J_{n\times n}=[a_{i,j}]\in {\cal M}_{n\times n}$, where $a_{i,j}=1$, $\forall i,j$.

\item
$m\wedge n=gcd(m,n)$: The greatest common divisor of $m$ and $n$.

\item
$m\vee n=lcm(m,n)$: The least common multiple of $m$ and $n$.

\item
$\left<x,y\right>$, $x,y\in \R^n$: The standard  inner product on $\R^n$.

\item
$\left<x,y\right>_{{\cal V}}$, $x,y\in {\cal V}$: The dimension-free inner product on ${\cal V}$.

\item
$\|x\|$, $x\in \R^n$: The standard norm on $\R^n$.

\item
$\|x\|_{{\cal V}}$: A norm on dimension-free vector space ${\cal V}$, or an operator norm of linear operators on ${\cal V}$.

\item $\ltimes$: The first semi-tensor product (STP) of matrices.

\item $\circ$: The second semi-tensor product (STP) of matrices.

\item $\lvtimes$: The first vector product (or MV-product ) of matrix with vector.

\item $\vec{\circ}$: The second vector product of matrix with vector.

\item $\lplus$ ($\lminus$): The left M-addition (M-subtraction) of matrices.

\item $\lvplus$ ($\lvminus$): The left V-addition (V-subtraction) of vectors.

\item $x\lra y$: Vector equivalence (V-equivalence).

\item $\bar{x}$: Vector equivalence class.

\item $A\approx B$: Matrix equivalence (M-equivalence).

\item $\hat{A}$: Matrix equivalence class.

\end{enumerate}

\vskip 2mm

\section{Dimension Free State Space}

\subsection{Vector Space Structure and Distance}

Consider a dimension-varying dynamic system, its state space should be a dimension free vector space. We construct such a state space as follows:
\begin{align}\label{2.1}
{\cal V}=\bigcup_{n=1}^{\infty} {\cal V}_n,
\end{align}
where ${\cal V}_n$ is an $n$-dimensional vector space. For simplicity, we may identify ${\cal V}_n=\R^n$. A vector $x\in {\cal V}$ could be any finite dimensional vector. A dynamic system with the state $x(t)$ evolving on ${\cal V}$ is called a cross-dimensional dynamic system. As the state space of a dynamic system, ${\cal V}$ needs (i) a vector space structure; and (ii) a topological structure.

\begin{rem}\label{r2.0} Up to now, a standard way to deal with dimension-varying system is to assume that the state space is a disjoint union of finite Euclidean spaces \cite{pak17}
$$
H:= \coprod_{i=1}^k\R^{n_i},
$$
where  $\R^{n_i}$, $i=1,\cdots,k$ are considered as clopen components of $H$. So the dimension-varying can happen only by switching (or jumping) from one to another. Such a model is not available to describe the dimension-varying process.
\end{rem}

We first propose a vector space structure on ${\cal V}$:

\begin{dfn}\label{d2.1} Let $x,~y\in {\cal V}$, say, $x\in {\cal V}_m$ and  $y\in {\cal V}_n$, and $t=m\vee n$. Then an addition of $x$ and $y$, called the V-addition, is defined as follows:

\begin{align}\label{2.2}
x\lvplus y:=\left(x\otimes \J_{t/m}\right)+\left(y\otimes \J_{t/n}\right)\in {\cal V}_t.
\end{align}

Correspondingly, the V-subtraction is defined as
\begin{align}\label{2.3}
x\lvminus y:=x\lvplus (-y).
\end{align}
\end{dfn}

Recall that a set $V$ with addition and scalar product on $\R$ is a vector space if the following are satisfied: (1) $x+y=y+x$; (2)$(x+y)+z=x+(y+z)$; (3) there exists a unique $0\in V$, such that $x+0=x$, and for each $x$ there is a unique $-x$ such that $x+(-x)=0$; (4) $1\cdot x=x$; (5) $(ab)x=a(bx)$, $a,b\in \R$; (6) $(a+b)x=ax+bx$, $a,b\in \R$; (7) $a(x+y)=ax+ay$, $a\in \R$.

If only  the uniqueness of $0$, and then the uniqueness of $-x$ for each $x$, is excluded, $V$ is called a pseudo-vector space \cite{abr78}.

The following result is evident from one by one verification:

\begin{prp}\label{p2.2} ${\cal V}$ with addition, subtraction defined by (\ref{2.2}) and (\ref{2.3}) respectively, and conventional scalar product is a pseudo-vector space.
\end{prp}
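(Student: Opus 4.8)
The plan is to verify the pseudo-vector space axioms (1)--(7) directly, exploiting properties of the Kronecker product $\otimes$ and of the one-entry vectors $\J_k$. The only subtlety is that the V-addition $x \lvplus y$ first lifts each summand to the common dimension $t = m \vee n$ via $x \mapsto x \otimes \J_{t/m}$, so essentially every axiom reduces, after lifting all vectors involved to a suitable common dimension, to the corresponding axiom in an ordinary Euclidean space $\R^t$, together with the bookkeeping fact that the lifting map $x \mapsto x \otimes \J_{t/m}$ is linear and compatible with further lifting (i.e. $(x \otimes \J_p) \otimes \J_q = x \otimes \J_{pq}$, and $\J_p \otimes \J_q = \J_{pq}$). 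I would state this compatibility as the one computational lemma underpinning everything, then dispatch the axioms.

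Concretely: for commutativity (1), if $x \in {\cal V}_m$, $y \in {\cal V}_n$ then $x \lvplus y$ and $y \lvplus x$ both live in ${\cal V}_{m \vee n}$ and equal $(x \otimes \J_{t/m}) + (y \otimes \J_{t/n})$ up to the commutativity of $+$ in $\R^t$. For associativity (2) with $x \in {\cal V}_m$, $y \in {\cal V}_n$, $z \in {\cal V}_p$, set $t = m \vee n \vee p$; using the compatibility lemma, both $(x \lvplus y) \lvplus z$ and $x \lvplus (y \lvplus z)$ lift all three vectors to $\R^t$ as $x \otimes \J_{t/m}$, $y \otimes \J_{t/n}$, $z \otimes \J_{t/p}$ and sum them there, so they agree. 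For the scalar-product axioms (4)--(7), note scalar multiplication does not change dimension, so $a(x \lvplus y) = a\big((x \otimes \J_{t/m}) + (y \otimes \J_{t/n})\big) = (ax)\otimes \J_{t/m} + (ay)\otimes \J_{t/n} = (ax) \lvplus (ay)$, using bilinearity of $\otimes$; axioms (5) and (6) are immediate since they involve a single vector of fixed dimension; (4) is trivial. For the zero and negatives in axiom (3), one checks that $\0_n$ (the zero of ${\cal V}_n$, for any $n$) satisfies $x \lvplus \0_n = x \otimes \J_{(m \vee n)/m} + \0_{m \vee n}$, which lies in ${\cal V}_{m \vee n}$ and equals $x$ only when $n \mid m$; so there is no \emph{unique} additive identity, but each $\0_n$ acts as an identity on ${\cal V}_m$ whenever $m$ is a multiple of $n$ — in particular $\0_1 = 0 \in \R$ is a (non-unique) global identity — and $-x := (-1)\cdot x$ is a corresponding negative. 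This non-uniqueness is exactly why ${\cal V}$ is only a \emph{pseudo}-vector space, matching Definition following the list of axioms.

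The main obstacle is not any single axiom — each is a short computation — but rather packaging the "lift to a common dimension" step cleanly so the verification of associativity (and of the distributive law $a(x \lvplus y) = ax \lvplus ay$ when more than two vectors or mixed dimensions are in play) does not become a morass of lcm arithmetic. I would isolate the single fact that for any $x \in {\cal V}_m$ and any positive integer $k$, the map $x \mapsto x \otimes \J_k$ is $\R$-linear from ${\cal V}_m$ to ${\cal V}_{km}$ and satisfies $(x \otimes \J_j) \otimes \J_k = x \otimes \J_{jk}$; once this is recorded, every axiom follows by reducing to the already-known vector space axioms of $\R^t$ for the appropriate $t$, exactly as the paper indicates with the phrase ``evident from one by one verification.'' Hence the proof is genuinely routine, and I would present it as such, emphasizing only the common-dimension reduction and the failure of uniqueness of $0$.
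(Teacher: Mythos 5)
Your proposal is correct and follows essentially the same route as the paper, which offers no written argument at all beyond declaring the result ``evident from one by one verification'': you carry out exactly that verification, with the Kronecker-product compatibility $(x\otimes \J_j)\otimes \J_k = x\otimes \J_{jk}$ reducing each axiom to the corresponding axiom in $\R^t$ for a common dimension $t$. Your observation that the zeros $\0_n$ (and hence negatives, with $x\lvminus x=\0_m$) are non-unique is precisely the point the paper intends when it weakens ``vector space'' to ``pseudo-vector space.''
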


\begin{dfn}\label{d2.3} Let $x,~y\in {\cal V}$, say, $x\in {\cal V}_m$ and  $y\in {\cal V}_n$, and $t=m\vee n$. The inner product of $x$ and $y$  is defined as follows:
\begin{align}\label{2.4}
\left<x\;,\;y\right>_{{\cal V}}:=\frac{1}{t}\left<x\otimes \J_{t/m}\;,\;y\otimes \J_{t/n}\right>.
\end{align}
\end{dfn}

Using this inner product, we can define a norm on ${\cal V}$.

\begin{dfn}\label{d2.4} The norm on ${\cal V}$ is defined as
\begin{align}\label{2.5}
\|x\|_{{\cal V}}:=\sqrt{\left<x\;,\;x\right>_{{\cal V}}}.
\end{align}
\end{dfn}

Finally, we define a distance on ${\cal V}$.

\begin{dfn}\label{d2.5} Let $x,~y\in {\cal V}$. The distance between $x$ and $y$ is defined as
\begin{align}\label{2.6}
d_{{\cal V}}(x,y):=\|x\lvminus y\|_{{\cal V}}.
\end{align}
\end{dfn}

\begin{rem}\label{r2.6}
\begin{itemize}
\item[(i)]~~Precisely speaking, in previous three definitions, the inner product, the norm, and the distance should be pseudo-inner product, pseudo-norm, and pseudo-distance. Because the inner product does not satisfy: $\left<x,x\right>_{{\cal V}}=0~\Rightarrow~x=0$ is unique;  correspondingly, the norm does not satisfy: $\|x\|_{{\cal V}}=0~\Rightarrow~x=0$ is unique, and the distance does not satisfy:  $d_{{\cal V}}(x,y)=0~\Rightarrow~x=y$. For statement ease, the ``pseudo-" is omitted.
\item[(ii)]~~ The metric topology deduced by the distance, denoted by ${\cal T}_d$, makes $({\cal V}, {\cal T}_d)$ a topological space  \cite{jan84}.
\end{itemize}
\end{rem}

A topological space $X$ is a Hausdorff space if for any two points $x,~y\in X$ there exist two open sets $U_x$ and $U_y$, $U_x\cap U_y=\emptyset$, such that  $x\in U_x$ and $y\in U_y$ \cite{jan84}. Unfortunately,  $({\cal V}, {\cal T}_d)$ is not a Hausdorff space.

\begin{prp}\label{p2.601}
$({\cal V}, {\cal T}_d)$ is not a Hausdorff space.
\end{prp}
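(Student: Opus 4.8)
The plan is to exploit the fact, flagged in Remark~\ref{r2.6}(i), that $d_{{\cal V}}$ is only a pseudo-distance, i.e. $d_{{\cal V}}(x,y)=0$ need not force $x=y$. Once I exhibit a concrete pair of \emph{distinct} points at zero distance, non-Hausdorffness follows from the elementary point-set fact that in a (pseudo-)metric topology two points at distance zero cannot be separated by disjoint open sets: every open neighbourhood of one contains the other. So the proof splits into (a) producing the witnesses and (b) the standard separation argument.

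For step (a) I would take the simplest cross-dimensional pair, for instance $x=1\in{\cal V}_1$ and $y=\J_2=[1,1]^T\in{\cal V}_2$ (more generally $x=a\in{\cal V}_m$ and $y=a\otimes\J_2\in{\cal V}_{2m}$). Here $t=1\vee 2=2$, so by (\ref{2.2})--(\ref{2.3})
\[
x\lvminus y=\bigl(x\otimes\J_{2}\bigr)+\bigl((-y)\otimes\J_{1}\bigr)=\J_2-\J_2=[0,0]^T\in{\cal V}_2,
\]
and hence by (\ref{2.5})--(\ref{2.6}), $d_{{\cal V}}(x,y)=\|[0,0]^T\|_{{\cal V}}=0$. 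On the other hand $x\neq y$: they lie in ${\cal V}_1$ and ${\cal V}_2$ respectively, and the union (\ref{2.1}) does not identify vectors of different length, so they are genuinely different elements of ${\cal V}$. (If one prefers not to cite Remark~\ref{r2.6}(i), the vanishing $\|[0,0]^T\|_{{\cal V}}=0$ is immediate directly from Definitions~\ref{d2.3}--\ref{d2.4}.)

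For step (b), suppose for contradiction that $({\cal V},{\cal T}_d)$ is Hausdorff and pick disjoint open sets $U_x\ni x$, $U_y\ni y$. Since ${\cal T}_d$ is the metric topology of $d_{{\cal V}}$ (Remark~\ref{r2.6}(ii)), there is $r>0$ with $B(x,r):=\{z\in{\cal V}: d_{{\cal V}}(x,z)<r\}\subseteq U_x$. But $d_{{\cal V}}(x,y)=0<r$, so $y\in B(x,r)\subseteq U_x$, whence $y\in U_x\cap U_y=\emptyset$, a contradiction. I do not anticipate a real technical obstacle here; the only ``content'' is recognizing that the separating points should be exactly those that witness the degeneracy of the pseudo-norm, after which the argument is routine. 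It is worth remarking, when writing the proof up, that this failure of Hausdorffness is precisely the motivation for passing to the quotient space in Section~3.
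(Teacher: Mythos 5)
Your proposal is correct and takes essentially the same route as the paper: the paper's proof also exhibits a pair $x$ and $y=x\otimes\J_k$ with $d_{{\cal V}}(x,y)=0$ and concludes that no open set of the metric topology can contain $x$ without containing $y$. Your version merely makes the witnesses concrete ($1$ and $[1,1]^T$) and spells out the standard ball/separation argument that the paper leaves implicit.
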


\begin{proof}
Consider $x$ and $y=x\otimes \J_k$, $k>1$. Then $d_{{\cal V}}(x,y)=0$. So there is no open set $U_x$ such that $x\in U_x$ but $y\not\in U_x$.
\end{proof}

\begin{prp}\label{p2.602}
$({\cal V}, {\cal T}_d)$ is a path-wise connected topological space.
\end{prp}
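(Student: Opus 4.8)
The plan is to exhibit, for any two vectors $x\in {\cal V}_m$ and $y\in {\cal V}_n$, an explicit continuous path in $({\cal V},{\cal T}_d)$ joining them. The key observation is that $({\cal V},{\cal T}_d)$ contains a copy of every $\R^k$, and within each single $\R^k$ the metric $d_{{\cal V}}$ restricts to a scalar multiple of the Euclidean metric (since for $x,y\in{\cal V}_k$ we have $t=k\vee k=k$ and $d_{{\cal V}}(x,y)=\frac{1}{\sqrt{k}}\|x-y\|$), so straight-line segments inside a fixed $\R^k$ are continuous paths. Thus it suffices to connect $x$ to $y$ by passing through a common dimension.

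First I would pick $t=m\vee n$ and note that $x$ is ``linked'' to its inflation $x\otimes \J_{t/m}\in {\cal V}_t$ at zero distance: by Proposition~\ref{p2.601}'s computation, $d_{{\cal V}}(x, x\otimes \J_{t/m})=0$. Hence the constant-speed segment in $\R^t$ from $x\otimes\J_{t/m}$ to $y\otimes\J_{t/n}$, prepended and appended with these zero-length ``jumps,'' is the desired path; more carefully, I would define $\gamma:[0,1]\to{\cal V}$ by
\begin{align}\label{p2602path}
\gamma(s)=(1-s)\,(x\otimes\J_{t/m})+s\,(y\otimes\J_{t/n})\in {\cal V}_t,
\end{align}
and check $\gamma(0)$ is at distance $0$ from $x$ and $\gamma(1)$ is at distance $0$ from $y$, while $\gamma$ is continuous because $d_{{\cal V}}(\gamma(s),\gamma(s'))=\frac{1}{\sqrt t}\,|s-s'|\,\|(x\otimes\J_{t/m})-(y\otimes\J_{t/n})\|$. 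Since points at distance $0$ are topologically indistinguishable, concatenating the (trivial) paths $x\leadsto \gamma(0)$ and $\gamma(1)\leadsto y$ with $\gamma$ yields a continuous path from $x$ to $y$; equivalently, one simply observes $\gamma$ itself is a path whose endpoints lie in the closures of $\{x\}$ and $\{y\}$, which is enough for path-connectedness in a non-Hausdorff (indeed non-$T_1$) space. Alternatively, and perhaps cleanest for the write-up, extend $\gamma$ to $[-1,2]$ by setting $\gamma(s)=x$ for $s\in[-1,0]$ and $\gamma(s)=y$ for $s\in[1,2]$, and verify continuity at the seams $s=0,1$ using the zero-distance identities.

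The only genuine point requiring care — the ``hard part,'' though it is mild — is the continuity of $\gamma$ at the gluing points $s=0$ and $s=1$, where the ambient Euclidean dimension formally changes from $m$ (resp. $n$) to $t$. Here one must compute the cross-dimensional distance $d_{{\cal V}}(x,\gamma(s))$ for small $s>0$ directly from Definition~\ref{d2.5}, using that $m\vee t=t$, and confirm it tends to $0$ as $s\to 0^+$; this reduces to the estimate $d_{{\cal V}}(x,\gamma(s))=\|x\lvminus\gamma(s)\|_{{\cal V}}=\frac{1}{\sqrt t}\|(x\otimes\J_{t/m})-\gamma(s)\|\to 0$, which holds since $\gamma(0)=x\otimes\J_{t/m}$. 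Everything else is the routine verification that $d_{{\cal V}}$ restricted to ${\cal V}_t$ is (a constant times) the Euclidean metric, so segments there are continuous, which I would state as a one-line lemma and not belabor.
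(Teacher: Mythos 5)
Your proposal is correct and essentially coincides with the paper's proof: the paper's path $\pi(\lambda)=\lambda x\lvplus(1-\lambda)y$ expands to exactly your segment between the inflated copies $x\otimes\J_{t/m}$ and $y\otimes\J_{t/n}$ in ${\cal V}_t$, with continuity obtained from the same Lipschitz-type estimate for $d_{{\cal V}}$. Your only addition is the explicit constant-extension and seam-continuity check identifying the endpoints with $x$ and $y$ (which are at zero distance from the inflated copies), a point the paper's proof glosses over by treating them as the same point.
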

\begin{proof} For any two points $x,~y\in {\cal V}$ define
a mapping $\pi:I\ra {\cal V}$ (where $I=[0,1]\subset \R$) as
\begin{align}\label{2.601}
\pi(\lambda):=\lambda x\lvplus (1-\lambda)y,\quad \lambda\in I.
\end{align}
Let $O\subset {\cal V}$ be an open set and $O\cap \pi(I)\neq\emptyset$. Then consider any $\lambda_0$ such that $\pi(\lambda_0)\in O$.
Since
$$
\begin{array}{l}
d_{{\cal V}}\left( (\lambda x\lvplus (1-\lambda)y), (\lambda_0 x\lvplus (1-\lambda_0)y)\right)\\
\leq |\lambda-\lambda_0|\left(\|x\|_{{\cal V}}+\|y\|_{{\cal V}}\right),
\end{array}
$$
as $|\lambda-\lambda_0|$ small enough, $\pi(\lambda)\in O$. That is,
$$
\left(\lambda_0-|\lambda-\lambda_0|, \lambda_0+|\lambda-\lambda_0|\right)\in \pi^{-1}(O),
$$
which implies $\pi$ is continuous. It follows that ${\cal V}$ is path-wise connected.
\end{proof}

The above proposition shows that  the distance $d_{{\cal V}}$ glues all Euclidean spaces $\R^n$, $n=1,2,\cdots$ together to form the dimension free state space ${\cal V}$. This is a key point for further discussion.

\subsection{Projection From ${\cal V}_m$ To ${\cal V}_n$}

\begin{dfn}\label{d3.1} Let $\xi\in {\cal V}_m$. The projection of $\xi$ on ${\cal V}_n$, denoted by $\pi^m_n(\xi)$, is defined as
\begin{align}\label{3.1}
\pi^m_n(\xi):=\argmin_{x\in {\cal V}_n}\|\xi\lvminus x\|_{{\cal V}}.
\end{align}
\end{dfn}

Let $m\vee n=t$ and set $\a:=t/m$, $\b:=t/n$.  Then the square error is
$$
\D:=\|\xi\lvminus x\|^2_{{\cal V}}=\frac{1}{t}\|\xi\otimes \J_{\a}-x\otimes \J_{\b}\|^2.
$$

Denote
$$
\xi\otimes \J_{\a}:=(\eta_1,\eta_2,\cdots,\eta_t)^T,
$$
where
$$
\eta_j=\xi_i,\quad (i-1)\a+1\leq j\leq i\a;\; i=1,\cdots,m.
$$

Then
\begin{align}\label{3.2}
\D=\frac{1}{t}\dsum_{i=1}^n\dsum_{j=1}^{\b}\left(\eta_{(i-1)\b+j}-x_i\right)^2
\end{align}
Setting
$$
\frac{\pa \D}{\pa x_i}=0,\quad i=1,\cdots, n
$$
yields
\begin{align}\label{3.3}
x_i=\frac{1}{\b}\left(\dsum_{j=1}^{\b} \eta_{(i-1)\b+j}\right),\quad i=1,\cdots,n.
\end{align}
That is, $\pi^m_n(\xi)=x$. Moreover, it is easy to verify that
$$\left<\xi\lvminus x, x\right>_{{\cal V}}=0.
$$
Summarizing the above argument, we have the following result.

\begin{prp} \label{p3.2} Let $\xi\in {\cal V}_m$. The projection of $\xi$ on ${\cal V}_n$, denoted by $x$, is determined by (\ref{3.3}). Moreover, $\xi\lvminus x$ is orthogonal to $x$. That is,
\begin{align}\label{3.4}
\left[\xi-\pi^m_n(\xi)\right]\perp \pi^m_n(\xi),\quad \xi\in {\cal V}_m.
\end{align}
\end{prp}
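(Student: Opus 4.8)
The plan is to derive the minimizer formula \eqref{3.3} directly from the first-order optimality condition for the quadratic $\D$, and then verify the orthogonality \eqref{3.4} by a short computation. Since $\D$ in \eqref{3.2} is a nonnegative quadratic form in the variables $x_1,\dots,x_n$ that separates as a sum over $i$, each $x_i$ appears only in the $i$-th inner sum; so $\D$ is a sum of $n$ independent one-dimensional convex parabolas in $x_i$, and the stationarity condition $\pa\D/\pa x_i=0$ is both necessary and sufficient for the global minimum. Carrying out the differentiation of $\frac{1}{t}\sum_{j=1}^{\b}(\eta_{(i-1)\b+j}-x_i)^2$ with respect to $x_i$ gives $-\frac{2}{t}\sum_{j=1}^{\b}(\eta_{(i-1)\b+j}-x_i)=0$, i.e. $\b x_i=\sum_{j=1}^{\b}\eta_{(i-1)\b+j}$, which is exactly \eqref{3.3}. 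This establishes $\pi^m_n(\xi)=x$.

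For the orthogonality claim, first I would note that both $\xi\lvminus x$ and $x$ live in ${\cal V}$, so $\left<\xi\lvminus x,x\right>_{{\cal V}}$ is computed via \eqref{2.4}: lift $\xi$ to ${\cal V}_t$ by $\otimes\J_\a$ and $x$ to ${\cal V}_t$ by $\otimes\J_\b$, and take $\frac{1}{t}$ times the standard inner product in $\R^t$. Writing $x\otimes\J_\b=(\zeta_1,\dots,\zeta_t)^T$ with $\zeta_{(i-1)\b+j}=x_i$ for $1\le j\le\b$, the inner product becomes $\frac{1}{t}\sum_{i=1}^{n}\sum_{j=1}^{\b}(\eta_{(i-1)\b+j}-x_i)x_i$. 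Pulling $x_i$ out of the inner sum, this equals $\frac{1}{t}\sum_{i=1}^{n}x_i\big(\sum_{j=1}^{\b}\eta_{(i-1)\b+j}-\b x_i\big)$, and each parenthesized factor vanishes precisely by the optimality relation \eqref{3.3}. Hence $\left<\xi\lvminus x,x\right>_{{\cal V}}=0$, which is the statement \eqref{3.4}.

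I do not anticipate a serious obstacle here; the result is essentially the standard fact that an $L^2$-projection onto a subspace is characterized by the residual being orthogonal to the subspace, transported through the block-averaging structure of the $\ot\J$ embeddings. The one point that deserves a word of care is justifying that the common refinement dimension for $\xi\lvminus x$ and $x$ is again $t=m\vee n$ — this holds because $\xi\lvminus x\in{\cal V}_t$ (as $\xi\in{\cal V}_m$, $x\in{\cal V}_n$, and $m\vee t=t$, $n\vee t=t$), so no further inflation is needed when forming $\left<\xi\lvminus x,x\right>_{{\cal V}}$ beyond lifting $x$ by $\ot\J_\b$; a sentence to that effect keeps the computation honest. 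A second minor point is that the stationary point is genuinely a minimizer and not merely a critical point, which is immediate from convexity (the Hessian of $\D$ is $\frac{2\b}{t}I_n\succ0$), so the $\argmin$ in Definition \ref{d3.1} is well defined and equals $x$.
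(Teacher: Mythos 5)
Your proposal is correct and follows essentially the same route as the paper: derive (\ref{3.3}) from the first-order condition on the separable quadratic $\D$ and then verify $\left<\xi\lvminus x, x\right>_{{\cal V}}=0$ by direct computation. The extra care you add (convexity of $\D$ so the stationary point is the minimizer, and the observation that the common lifting dimension stays $t=m\vee n$) only makes explicit what the paper leaves as ``easy to verify.''
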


Next, we try to find the matrix expression of $\pi^m_n$, denoted by $\Pi^m_n$, such that
\begin{align}\label{3.5}
\pi^m_n(\xi)=\Pi^m_n\xi,\quad \xi\in {\cal V}_m.
\end{align}
Then, we have
$$
\begin{array}{ccl}
\eta&=&\xi\otimes \J_{\a}=\left(I_m\otimes \J_{\a}\right)\xi\\
x&=&\frac{1}{\b}\left(I_n\otimes \J_{\b}^T\right)\eta\\
~&=&\frac{1}{\b}\left(I_n\otimes \J_{\b}^T\right)\left(I_m\otimes \J_{\a}\right)\xi.
\end{array}
$$
Hence, we have
\begin{align}\label{3.6}
\Pi^m_n=\frac{1}{\b}\left(I_n\otimes \J_{\b}^T\right)\left(I_m\otimes \J_{\a}\right).
\end{align}

Using this structure, we can prove the following result.

\begin{lem}\label{l3.3}
\begin{enumerate}
\item Assume $m \geq n$, then $\Pi^m_n$ is of full row rank, and hence $\Pi^m_n(\Pi^m_n)^T$ is non-singular.
\item Assume $m \leq n$, then $\Pi^m_n$ is of full column rank, and hence $(\Pi^m_n)^T\Pi^m_n$ is non-singular.
\end{enumerate}
\end{lem}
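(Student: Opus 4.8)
The plan is to extract the explicit entries of $\Pi^m_n$ from the averaging formula (\ref{3.3}) and to observe that its non-zero pattern is ``staircase shaped'', which forces the rows (when $m\ge n$) or the columns (when $m\le n$) to be linearly independent; non-singularity of the relevant Gram matrix then follows at once. Write $t=m\vee n$, $\a=t/m$, $\b=t/n$. Since $\eta=\xi\otimes\J_{\a}$ has $\eta_r=\xi_k$ for $(k-1)\a+1\le r\le k\a$, formula (\ref{3.3}) reads
$$
(\Pi^m_n)_{i,k}=\frac1{\b}\,\bigl|\{(i-1)\b+1,\dots,i\b\}\cap\{(k-1)\a+1,\dots,k\a\}\bigr|,
$$
so $(\Pi^m_n)_{i,k}>0$ exactly when the two integer intervals overlap, i.e. when $(i-1)\b<k\a$ and $(k-1)\a<i\b$. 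Hence, for a fixed row $i$, the columns $k$ with $(\Pi^m_n)_{i,k}>0$ are precisely the integers in $\bigl((i-1)\b/\a,\;i\b/\a+1\bigr)$; this set is non-empty and its largest element is $k_i:=\lceil i\b/\a\rceil=\lceil im/n\rceil$ (note $k_n=m$).

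For part (1) assume $m\ge n$, so $\b/\a=m/n\ge1$. Then $k_{i+1}=\lceil(i+1)m/n\rceil\ge\lceil im/n+1\rceil=k_i+1$, i.e. $k_1<k_2<\dots<k_n$: the last non-zero entry of $\Row_i(\Pi^m_n)$ lies strictly to the left of that of $\Row_{i+1}(\Pi^m_n)$. If $\sum_{i=1}^n c_i\Row_i(\Pi^m_n)=0$, then reading off coordinate $k_n$ annihilates every term but $c_n(\Pi^m_n)_{n,k_n}$ (the rows $i<n$ already vanish in column $k_n$), so $c_n=0$; descending through $k_{n-1},\dots,k_1$ gives $c_i=0$ for all $i$. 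Thus the $n$ rows are linearly independent, $\rank\Pi^m_n=n$, and $\Pi^m_n(\Pi^m_n)^T$ is the $n\times n$ Gram matrix of $n$ linearly independent vectors, hence positive definite and non-singular.

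Part (2) is obtained by transposition: a direct Kronecker-product computation from (\ref{3.6}) gives $\Pi^n_m=\tfrac{m}{n}(\Pi^m_n)^T$, so if $m\le n$ then $n\ge m$ and part (1) applied to $\Pi^n_m$ shows $(\Pi^m_n)^T$, and therefore $\Pi^m_n$, has full column rank $m$; consequently $(\Pi^m_n)^T\Pi^m_n$ is a non-singular $m\times m$ Gram matrix. (Equivalently, one repeats the staircase argument with rows and columns interchanged, now using $\a/\b=n/m\ge1$ so that the bottom-most non-zero entry of column $k$ sits in row $\lceil kn/m\rceil$, strictly increasing in $k$.) The only delicate point is the index bookkeeping — pinning down the last non-zero entry of each line from (\ref{3.3}) and checking that these positions are strictly monotone — where the hypothesis $m\ge n$ (equivalently $m/n\ge1$, forcing each ceiling to jump by at least one) is exactly what makes the staircase work; passing from full rank to non-singularity of the Gram matrix is then routine.
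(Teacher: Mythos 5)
Your proof is correct and follows essentially the same route as the paper: part (1) rests on the staircase structure of the supports of the rows of $\Pi^m_n$ (which the paper states informally and you make precise via the explicit entries and the strictly increasing last-nonzero-column indices $\lceil im/n\rceil$), and part (2) uses the transpose relation $\Pi^n_m=\frac{\b}{\a}(\Pi^m_n)^T$, which is exactly the paper's (\ref{3.7}). Your version is simply a more rigorous elaboration of the paper's argument.
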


\noindent{\it Proof.}
\begin{enumerate}
\item Assume $m\geq n$: When $m=n$, $\Pi^m_n (\Pi^m_n)^T$ is an identity matrix. So we assume $m>n$. Using the structure of $\Pi^m_n$, defined by (\ref{3.6}), it is easy to see that each row of $\Pi^m_n$ has at least two nonzero elements. Moreover, the columns of nonzero elements of $i$-th row pressed the columns of nonzero elements of $j>i$ rows, except $j=i+1$. In latter case, they may have one overlapped column. Hence,  $\Pi^m_n$ is of full row rank. It follows that  $\Pi^m_n (\Pi^m_n)^T$ is non-singular.
\item It follows from (\ref{3.6}) that
\begin{align}\label{3.7}
\Pi^n_m=\frac{\b}{\a}\left(\Pi^m_n\right)^T.
\end{align}
The conclusion is obvious.
\end{enumerate}
\hfill $\Box$

\subsection{Projection of Linear Systems}

Consider a linear system:
\begin{align}\label{4.1}
\xi(t+1)=A\xi(t),\quad \xi(t)\in \R^m.
\end{align}

Our purpose is to find a matrix $A_{\pi}\in {\cal M}_{n\times n}$, such that the
the projective system of (\ref{4.1}) on $\R^n$ is described as
\begin{align}\label{4.2}
x(t+1)=A_{\pi}x(t),\quad x(t)\in \R^n.
\end{align}
Of course, we want system (\ref{4.2}) represents the evolution of the projection $\pi(\xi(t))$. That is, the ``perfect" projective trajectory should satisfy that
\begin{align}\label{4.3}
x(t)=\pi^m_n(\xi(t)).
\end{align}
But it is, in general, not able to find such $A_{\pi}$. So we try to find a least square approximate system.

Plugging (\ref{4.3}) into (\ref{4.2}), we have
\begin{align}\label{4.4}
\Pi^m_n\xi(t+1)=A_{\pi}\Pi^m_n\xi(t).
\end{align}
Using (\ref{4.1}) and noticing that $\xi(t)$ is arbitrary, we have
\begin{align}\label{4.5}
\Pi^m_nA=A_{\pi}\Pi^m_n.
\end{align}
With the help of Lemma \ref{l3.3}, the least square solution can be obtained.

\begin{prp}\label{p4.1}
\begin{align}\label{4.6}
A_{\pi}=\begin{cases}
\Pi^m_nA(\Pi^m_n)^T\left(\Pi^m_n (\Pi^m_n)^T\right)^{-1}\quad m\geq n\\
\Pi^m_nA\left((\Pi^m_n)^T\Pi^m_n\right)^{-1}(\Pi^m_n)^T\quad m< n.
\end{cases}
\end{align}
\end{prp}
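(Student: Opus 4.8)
The plan is to read the design equation (\ref{4.5}), namely $\Pi^m_nA = A_\pi\Pi^m_n$, as a linear matrix equation $XP = PA$ in the single unknown $X = A_\pi$, where I abbreviate $P := \Pi^m_n$, and then to produce its least–squares solution by a normal–equations argument, splitting into the two rank regimes furnished by Lemma \ref{l3.3}.

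\emph{Case $m \geq n$.} Introduce the residual functional $J(X) = \|XP - PA\|_F^2 = \trace\big((XP - PA)(XP - PA)^T\big)$, a convex quadratic in the entries of $X$. Differentiating, $\partial J/\partial X = 2(XP - PA)P^T$, and setting this to zero gives the normal equation $X\,(PP^T) = PAP^T$. By Lemma \ref{l3.3}(1), $PP^T = \Pi^m_n(\Pi^m_n)^T$ is nonsingular, so $X = PAP^T(PP^T)^{-1}$; since $J$ is convex, this stationary point is the global minimizer, which is exactly the first branch of (\ref{4.6}).

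\emph{Case $m < n$.} Now $P$ has full column rank, so $P^TP = (\Pi^m_n)^T\Pi^m_n$ is nonsingular. Here I would first observe that $XP = PA$ is \emph{consistent}, by verifying directly that $X_0 := PA(P^TP)^{-1}P^T$ is a solution, since $X_0P = PA(P^TP)^{-1}(P^TP) = PA$. Every solution then has the form $X = X_0 + Z$ with $ZP = 0$; the rows of $X_0$ lie in the column space of $P$, whereas $ZP=0$ forces each row of $Z$ to be orthogonal to that subspace, so $\langle X_0, Z\rangle_F = 0$ and Pythagoras gives $\|X\|_F^2 = \|X_0\|_F^2 + \|Z\|_F^2$. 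Hence $X_0$ is the unique minimum–norm solution, i.e. the least–squares solution, matching the second branch of (\ref{4.6}). (Both branches can be stated uniformly as $A_\pi = PA\,P^{+}$ with $P^{+}$ the Moore–Penrose inverse of $\Pi^m_n$, specialized via $P^{+} = P^T(PP^T)^{-1}$ and $P^{+} = (P^TP)^{-1}P^T$ respectively.)

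The computations are routine; the only point that needs care is conceptual rather than technical. In the case $m < n$ the equation $XP = PA$ has infinitely many exact solutions, so ``least–squares solution'' has to be understood as the minimum–norm solution among them, and the orthogonal–decomposition step above is precisely what selects the particular formula in (\ref{4.6}). I would make this reading explicit, either in the statement or in a short remark accompanying the proof.
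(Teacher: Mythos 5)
Your proof is correct, and in the case $m\geq n$ it is essentially the paper's argument made explicit: the paper simply right-multiplies (\ref{4.5}) by $(\Pi^m_n)^T$ and inverts $\Pi^m_n(\Pi^m_n)^T$ via Lemma \ref{l3.3}, which is exactly your normal equation; you add the (routine but worthwhile) justification that strict convexity of $\|X\Pi^m_n-\Pi^m_n A\|_F^2$ makes this stationary point the unique global minimizer. The genuine divergence is in the case $m<n$. There the paper postulates the ansatz $A_{\pi}=\tilde{A}(\Pi^m_n)^T$ and solves $\tilde{A}(\Pi^m_n)^T\Pi^m_n=\Pi^m_n A$ exactly, without explaining in what sense the result is ``the'' least-squares solution, since for $m<n$ the equation $A_{\pi}\Pi^m_n=\Pi^m_n A$ is consistent and has infinitely many exact solutions. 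Your argument supplies precisely this missing interpretation: you verify consistency, write an arbitrary solution as $X_0+Z$ with $Z\Pi^m_n=0$, and use the orthogonality between the column space of $\Pi^m_n$ and the left null space to conclude that $X_0=\Pi^m_n A\left((\Pi^m_n)^T\Pi^m_n\right)^{-1}(\Pi^m_n)^T$ is the unique minimum-Frobenius-norm solution; the paper's ansatz is then seen to be exactly the restriction to matrices whose rows lie in the column space of $\Pi^m_n$, i.e.\ the restriction your orthogonal decomposition justifies rather than assumes. Your closing observation that both branches are $A_{\pi}=\Pi^m_n A\,(\Pi^m_n)^{+}$ with the Moore--Penrose inverse also unifies the two formulas, which the paper does not do. What the paper's route buys is brevity; what yours buys is a precise optimality statement (residual minimization when $m\geq n$, minimum-norm selection among exact solutions when $m<n$), and your suggestion to record that reading in a remark would indeed strengthen the exposition.
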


\noindent{\it Proof.} Assume $m\geq n$: Right multiplying both sides of (\ref{4.5}) by $\left(\Pi^{m}_n\right)^T$ yields the first part of (\ref{4.6}).

Assume $m<n$: We may search a solution with the following form:
$$
A_{\pi}=\tilde{A}(\Pi^m_n)^T.
$$
Then the least square solution of $\tilde{A}$ is
$$
\tilde{A}=\Pi^m_nA\left((\Pi^m_n)^T\Pi^m_n\right)^{-1}.
$$
It follows that
$$
A_{\pi}=\Pi^m_nA\left((\Pi^m_n)^T\Pi^m_n\right)^{-1}(\Pi^m_n)^T,
$$
which is the second part of (\ref{4.6}).
\hfill $\Box$

\begin{dfn}\label{d4.101} Let $A\in {\cal M}_{m\times m}$. A mapping ${\pi}^m_n: {\cal M}_{m\times m} \ra {\cal M}_{n\times n}$ is defined as
\begin{align}\label{4.601}
 {\pi}^m_n(A):=A_{\pi},
\end{align}
where $A_{\pi}$ is defined by (\ref{4.6}).
\end{dfn}

\begin{cor}\label{c4.2}
Consider a continuous linear system
\begin{align}\label{4.7}
\dot{\xi}(t)=A\xi(t),\quad \xi(t)\in \R^m.
\end{align}
Its least square approximated system is
\begin{align}\label{4.8}
\dot{x}(t)=A_{\pi}x(t),\quad x(t)\in \R^n,
\end{align}
where $A_{\pi}$ is defined by (\ref{4.6}).
\end{cor}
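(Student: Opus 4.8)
The plan is to mirror the discrete-time argument of Proposition~\ref{p4.1}, observing that the linear-algebraic equation characterizing $A_{\pi}$ is insensitive to whether time is discrete or continuous. First I would impose the ``perfect'' projection requirement $x(t)=\pi^m_n(\xi(t))=\Pi^m_n\xi(t)$, exactly as in (\ref{4.3}). Differentiating this identity in $t$ and substituting the dynamics (\ref{4.7}) gives
$$
\dot{x}(t)=\Pi^m_n\dot{\xi}(t)=\Pi^m_nA\xi(t).
$$
On the other hand, the target system (\ref{4.8}) demands $\dot{x}(t)=A_{\pi}x(t)=A_{\pi}\Pi^m_n\xi(t)$. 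Equating the two expressions and using that the initial state $\xi(0)$, hence $\xi(t)$, is arbitrary in $\R^m$, one recovers precisely equation (\ref{4.5}):
$$
\Pi^m_nA=A_{\pi}\Pi^m_n.
$$

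Since this is the same (in general over- or under-determined) matrix equation already treated in Proposition~\ref{p4.1}, its least square solution is obtained in exactly the same manner: for $m\geq n$, right-multiply both sides by $(\Pi^m_n)^T$ and invert the nonsingular matrix $\Pi^m_n(\Pi^m_n)^T$ furnished by Lemma~\ref{l3.3}(1); for $m<n$, look for a solution of the form $A_{\pi}=\tilde{A}(\Pi^m_n)^T$ and solve the resulting least square problem for $\tilde{A}$ using the nonsingularity of $(\Pi^m_n)^T\Pi^m_n$ from Lemma~\ref{l3.3}(2). In both cases the solution is exactly the $A_{\pi}$ of (\ref{4.6}), i.e.\ the map ${\pi}^m_n(A)$ of Definition~\ref{d4.101}, so (\ref{4.8}) with this $A_{\pi}$ is the claimed least square approximated system.

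I do not anticipate a genuine obstacle here; the only point requiring care is conceptual rather than computational. One should make explicit that differentiating (\ref{4.3}) reduces the continuous-time problem to the identical algebraic problem (\ref{4.5}) already solved, so that the notion of ``least square'' and the pseudo-inverse formulas carry over verbatim, and no re-derivation is needed. Thus the proof is essentially a one-line reduction plus a citation of Proposition~\ref{p4.1} and Lemma~\ref{l3.3}.
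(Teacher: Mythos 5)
Your proposal is correct and follows the paper's own route: the paper likewise reduces the continuous-time case to Proposition~\ref{p4.1} by noting that $\dot{\xi}(t)\in\R^m$, so plugging $x(t)=\Pi^m_n\xi(t)$ into (\ref{4.8}) yields the same algebraic equation (\ref{4.5}) whose least square solution is (\ref{4.6}). You merely make explicit the differentiation step that the paper leaves implicit.
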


\noindent{\it Proof.} Since $\dot{\xi}(t)\in \R^m$, the proof is exactly the same as the one for system (\ref{4.2}).
\hfill $\Box$

Similarly, we have the following results for linear control systems.

\begin{cor}\label{c4.3}
\begin{enumerate}
\item Consider a discrete time linear control system
\begin{align}\label{4.9}
\begin{cases}
\xi(t+1)=A\xi(t)+Bu,\quad \xi(t)\in \R^m\\
y(t)=C\xi(t),\quad y(t)\in \R^p.
\end{cases}
\end{align}
Its least square approximated linear control system is
\begin{align}\label{4.10}
\begin{cases}
x(t+1)=A_{\pi}x(t)+\Pi^m_nBu,\quad x(t)\in \R^n\\
y(t)=C_{\pi}x(t),
\end{cases}
\end{align}
where $A_{\pi}$ is defined by  (\ref{4.6}), and
\begin{align}\label{4.11}
C_{\pi}=\begin{cases}
C(\Pi^m_n)^T\left(\Pi^m_n(\Pi^m_n)^T\right)^{-1},\quad m\geq n\\
C\left((\Pi^m_n)^T\Pi^m_n\right)^{-1}(\Pi^m_n)^T,\quad m< n.
\end{cases}
\end{align}

\item Consider a continuous time linear control system
\begin{align}\label{4.12}
\begin{cases}
\dot{\xi}(t)=A\xi(t)+Bu,\quad \xi(t)\in \R^m\\
y(t)=C\xi(t),\quad y(t)\in \R^p.
\end{cases}
\end{align}
Its least square approximated linear control system is
\begin{align}\label{4.13}
\begin{cases}
\dot{x}(t)=A_{\pi}x(t)+\Pi^m_nBu,\quad x(t)\in \R^n\\
y(t)=C_{\pi}x(t),\quad y(t)\in \R^p,
\end{cases}
\end{align}
where $A_{\pi}$ is defined by  (\ref{4.6}), and
$C_{\pi}$ is defined by (\ref{4.11}).
\end{enumerate}
\end{cor}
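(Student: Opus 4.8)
The plan is to reduce everything to the projection results already established for autonomous systems (Proposition~\ref{p4.1} and Corollary~\ref{c4.2}), treating the state/input part and the output part separately, and then note that the continuous case is obtained verbatim.

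\textbf{State and input equations.} Following (\ref{4.3}), I would impose that the reduced trajectory $x(t)$ be the ``perfect'' projection $x(t)=\pi^m_n(\xi(t))=\Pi^m_n\xi(t)$ and substitute this into the candidate reduced system $x(t+1)=A_{\pi}x(t)+\Pi^m_nBu$. Using (\ref{4.9}) on the left, this becomes $\Pi^m_nA\xi(t)+\Pi^m_nBu=A_{\pi}\Pi^m_n\xi(t)+\Pi^m_nBu$; the forcing terms cancel identically, leaving exactly the relation (\ref{4.5}), $\Pi^m_nA=A_{\pi}\Pi^m_n$. Hence the least-square choice of $A_{\pi}$ is unchanged and is given by (\ref{4.6}) through Proposition~\ref{p4.1}, while the correct input matrix is obtained simply by projecting the forcing vector $Bu\in\R^m$ onto $\R^n$, namely $\pi^m_n(Bu)=\Pi^m_n(Bu)=(\Pi^m_nB)u$. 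This yields the first line of (\ref{4.10}).

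\textbf{Output equation.} The output $y(t)=C\xi(t)$ stays in $\R^p$, so I would look for $C_{\pi}\in{\cal M}_{p\times n}$ making $C_{\pi}x(t)=C_{\pi}\Pi^m_n\xi(t)$ best match $C\xi(t)$ for all $\xi(t)$, i.e. minimizing $\|C_{\pi}\Pi^m_n-C\|$. This is the ``transpose'' of the problem solved in Proposition~\ref{p4.1}: here the known matrix $\Pi^m_n$ multiplies the unknown $C_{\pi}$ on the right. When $m\geq n$, Lemma~\ref{l3.3} gives $\Pi^m_n$ full row rank, so the normal equations $C_{\pi}\Pi^m_n(\Pi^m_n)^T=C(\Pi^m_n)^T$ give $C_{\pi}=C(\Pi^m_n)^T(\Pi^m_n(\Pi^m_n)^T)^{-1}$. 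When $m<n$, Lemma~\ref{l3.3} gives full column rank, the system $C_{\pi}\Pi^m_n=C$ is consistent, and seeking a solution of the form $C_{\pi}=\tilde{C}(\Pi^m_n)^T$ forces $\tilde{C}=C((\Pi^m_n)^T\Pi^m_n)^{-1}$, hence $C_{\pi}=C((\Pi^m_n)^T\Pi^m_n)^{-1}(\Pi^m_n)^T$. These are the two cases of (\ref{4.11}), completing part~1.

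\textbf{Continuous case and main difficulty.} For part~2, since $\dot{\xi}(t)\in\R^m$ exactly as $\xi(t+1)$ in the discrete case, the substitution $x(t)=\Pi^m_n\xi(t)$, $\dot{x}(t)=\Pi^m_n\dot{\xi}(t)$ carries the discrete argument over word for word (as in the proof of Corollary~\ref{c4.2}), and the output analysis is identical. The only point requiring care — and what I regard as the main obstacle — is the bookkeeping that the state-dynamics least-square problem and the input projection genuinely decouple (the $\Pi^m_nBu$ terms must cancel exactly, so that the optimal $A_{\pi}$ really is the one from the autonomous problem), together with keeping the two non-commuting sides straight when passing from the $A_{\pi}$ least-square problem, where the unknown sits to the left of $\Pi^m_n$, to the $C_{\pi}$ least-square problem, where it sits to the right; this is precisely why the pseudo-inverse formulas for $C_{\pi}$ in (\ref{4.11}) have a different shape from those for $A_{\pi}$ in (\ref{4.6}).
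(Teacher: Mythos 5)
Your proposal is correct and follows essentially the route the paper intends: the paper states Corollary~\ref{c4.3} without an explicit proof (``Similarly, \dots''), relying on the least-squares argument of Proposition~\ref{p4.1} and Corollary~\ref{c4.2}, and your substitution $x(t)=\Pi^m_n\xi(t)$, the cancellation of the forcing terms leaving (\ref{4.5}), the projection $\Pi^m_nBu$ of the input, and the full-rank/normal-equation treatment of $C_{\pi}\Pi^m_n=C$ via Lemma~\ref{l3.3} reproduce exactly (\ref{4.6}), (\ref{4.10}) and (\ref{4.11}). One small slip in your closing remark: in the output problem the unknown $C_{\pi}$ also sits to the \emph{left} of $\Pi^m_n$, just as $A_{\pi}$ does in (\ref{4.5}), so (\ref{4.11}) is simply (\ref{4.6}) with the factor $\Pi^m_nA$ replaced by $C$ rather than a ``transposed'' problem --- but this mislabeling does not affect your computations, which are correct.
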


\section{Linear Systems on Quotient Space}

\subsection{Linear Systems on Dimension-Free State Space}

The trajectory of any causal dynamic system should have semi-group property. That is,
\begin{align}\label{6.1}
x(t,\tau, x(\tau,t_0,x_0))=x(t,t_0,x_0).
\end{align}
Based on this consideration, the theory of Semi-group system (briefly, S-system) has been developed as a fundamental framework for causal dynamic systems \cite{ahs87,liu08}.

\begin{dfn}\label{d6.1}
\begin{enumerate}
\item Let $G$ be a semigroup and $X$ a set. If there is an action $\varphi:G\times X\ra X$, satisfying
\begin{align}\label{6.2}
\varphi(g_1,\varphi(g_2,x))=\varphi (g_1\circ g_2,x),\quad g_1,g_2\in G, x\in X,
\end{align}
then $(G,\varphi,X)$ is called an $S_0$-system.

\item If, in addition, $G$ is a monoid (i.e., there is an identity $e\in G$), and
\begin{align}\label{6.3}
\varphi(e,x)=x,\quad \forall x\in X,
\end{align}
then $(G,\varphi,X)$ is called an $S$-system
\end{enumerate}
\end{dfn}

Consider a classical linear system
\begin{align}\label{6.4}
x_{t+1}=A(t)x(t),\;\; x(0)=x_0,\quad x(t)\in \R^n.
\end{align}
It is obviously an S-system because $A(t)\in {\cal M}_{n\times n}$. Because ${\cal M}_{n\times n}$ is a semi-group with identity $e=I_n$. Moreover, for any $x\in \R^n$ (\ref{6.2}) and (\ref{6.3}) are satisfied.

Recall our purpose, we are going to define a cross-dimensional system. To remove the dimension restriction, we set
$$
{\cal M}:=\bigcup_{m=1}^{\infty}\bigcup_{n=1}^{\infty}{\cal M}_{m,n},
$$
and consider the action of ${\cal M}$ on ${\cal V}$, which will be the model of our cross-dimensional linear systems.

To pose a semi-group structure on ${\cal M}$, we have to define a product on ${\cal M}$. The semi-tensor product (STP) is a proper product on it.

\begin{dfn}\label{d6.2} \cite{che12} The first STP of matrices is defined as follows:
\begin{align}\label{6.5}
A\ltimes B:=\left(A\otimes I_{t/n}\right)\left(B\otimes I_{t/p}\right)\in {\cal M}_{tm/n\times tq/p},
\end{align}
where $\otimes$ is Kronecker product of matrices.
\end{dfn}

It is worth noting that the STP of matrices has been proposed and investigated for near two decades, and received many applications \cite{for16,lu17,muh16}.

\begin{rem}\label{r6.201} Throughout this paper the default matrix product is the first STP. Since it is a generalization of classical matrix product,  as a convention, the symbol $\ltimes$ is mostly omitted. That is, we always assume
\begin{align}\label{6.501}
AB:=A\ltimes B.
\end{align}
Of course, when $A$ and $B$ meet the dimension requirement, i.e., classical matrix product $AB$ is defined, then (\ref{6.501}) is obviously true.
\end{rem}

In this paper for our special purpose we define another STP, called the second STP, as follows:

\begin{dfn}\label{d6.3} Let $A\in {\cal M}_{m\times n}\subset {\cal M}$ and $B\in {\cal M}_{p\times q}\subset {\cal M}$. The second STP product on ${\cal M}$ is defined as follows: Assume $t=n\vee p$, then
\begin{align}\label{6.6}
A\circ B:=\left(A\otimes J_{t/n}\right)\left(B\otimes J_{t/p}\right)\in {\cal M}_{tm/n\times tq/p},
\end{align}
where $J_k:=\frac{1}{k}{\bf 1}_{k\times k}$.
\end{dfn}

The following proposition is a key for constructing a dynamic system.

\begin{prp}\label{p6.4} $({\cal M},\circ)$ is a semi-group.
\end{prp}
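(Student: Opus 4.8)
The plan is to verify that the binary operation $\circ$ on ${\cal M}$ is associative; the closure of ${\cal M}$ under $\circ$ is immediate from Definition~\ref{d6.3}, since for $A\in {\cal M}_{m\times n}$ and $B\in {\cal M}_{p\times q}$ the product $A\circ B$ lands in ${\cal M}_{tm/n\times tq/p}\subset {\cal M}$. So the whole content of the proposition is associativity. First I would fix $A\in {\cal M}_{m\times n}$, $B\in {\cal M}_{p\times q}$, $C\in {\cal M}_{r\times s}$ and compute both $(A\circ B)\circ C$ and $A\circ(B\circ C)$ directly from the definition, reducing everything to Kronecker-product identities.

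The key algebraic facts I would lean on are the mixed-product rule $(X\otimes Y)(Z\otimes W)=(XZ)\otimes(YW)$ whenever the dimensions match, associativity of $\otimes$, and the crucial idempotence/absorption properties of the averaging blocks $J_k=\tfrac1k{\bf 1}_{k\times k}$: namely $J_a J_b$-type products and $J_a\otimes J_b = $ a scalar multiple of ${\bf 1}$, together with ${\bf 1}_{a\times b}{\bf 1}_{b\times c}=b\,{\bf 1}_{a\times c}$, so that $J_a\otimes\J_b$ and similar terms collapse correctly. Concretely, writing $A\otimes J_{t/n}$ as $(A\otimes I_{t/n})(I_n\otimes J_{t/n})$ lets me shuffle the $I$-factors past the matrices via the mixed-product rule and isolate the purely combinatorial $J$-factors; the whole computation then amounts to checking that the "overhead" dimensions introduced on the two sides — $t_1=n\vee p$ for the first product and $t_2=q\vee r$ (or its analog after the first multiplication) for the second — recombine to the same common multiple, which is governed by elementary $\gcd$/$\lcm$ arithmetic on $n,p,q,r$. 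This is exactly parallel to the known associativity proof for the first STP $\ltimes$ in \cite{che12}, with $I_k$ replaced throughout by $J_k$ and the multiplicativity $I_aI_b=I_{ab}$ under $\otimes$ replaced by the corresponding identity $J_a\otimes J_b$ behaving like an averaging projector.

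I would organize it as: (i) reduce $(A\circ B)\circ C$ to a single expression of the form $\big(A\otimes J_\alpha\big)\big(B\otimes J_\beta\big)\big(C\otimes J_\gamma\big)$ times appropriate scalars, pushing all the $\otimes I$ and $\otimes J$ factors together using the mixed-product rule; (ii) do the same for $A\circ(B\circ C)$; (iii) observe both collapse to the same normal form once one checks the exponents $\alpha,\beta,\gamma$ agree, which follows from $\operatorname{lcm}$ associativity. The main obstacle is purely bookkeeping: tracking the various $t/n$, $t/p$, $tm/n$ factors through two nested applications of the definition without sign/scalar errors, and confirming that the scalar normalizations coming from the $\tfrac1k$ in each $J_k$ match on both sides — there is no conceptual difficulty, only the risk of a dimension mismatch, so I would be careful to state upfront the common dimension $T=n\vee p\vee q\vee r$ (suitably scaled) into which both triple products embed and verify both associativity orders yield precisely the same matrix in ${\cal M}_{Tm/(\cdot)\times Ts/(\cdot)}$.
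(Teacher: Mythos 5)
Your proposal is correct and follows essentially the same route as the paper: expand $(A\circ B)\circ C$ and $A\circ(B\circ C)$ using the multiplicativity $J_a\otimes J_b=J_{ab}$ to bring both sides to the normal form $(A\otimes J_\alpha)(B\otimes J_\beta)(C\otimes J_\gamma)$, and then verify the three subscript identities by the associativity of $\lcm$, exactly as in the paper's bookkeeping with $n_1,p_1,q_1,r_1,p_2,q_2,n_2,r_2$. The only caveat is that the common inner dimension is $\lcm(qn,pq,pr)$ rather than literally $n\vee p\vee q\vee r$, but since you left it as ``suitably scaled'' this does not affect the argument.
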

\begin{proof}
It is enough to prove the associativity, that is,
\begin{align}\label{6.7}
(A\circ B)\circ C=A\circ (B\circ C),\quad A,~B,~C\in {\cal M}.
\end{align}

Let $A\in {\cal M}_{m\times n}$,  $B\in {\cal M}_{p\times q}$, $C\in {\cal M}_{r\times s}$, and denote
$$
\begin{array}{ll}
\lcm(n,p)=nn_1=pp_1,&\lcm(q,r)=qq_1=rr_1,\\
\lcm(r,qp_1)=rr_2=qp_1p_2,&\lcm(n,pq_1)=nn_2=pq_1q_2.
\end{array}
$$
Note that
$$
J_p\otimes J_q=J_{pq}.
$$
Then
$$
\begin{array}{ccl}
(A\circ B)\circ C&=&((A\otimes J_{n_1}) (B\otimes J_{p_1}))\circ C\\
~&=&(((A\otimes J_{n_1}) (B\otimes J_{p_1}))\otimes J_{p_2})(C\otimes J_{r_2})\\
~&=&(A\otimes J_{n_1p_2}) (B\otimes J_{p_1p_2})(C\otimes J_{r_2}).
\end{array}
$$
$$
\begin{array}{ccl}
A\circ (B\circ C)&=&A\circ((B\otimes J_{q_1}) (C\otimes J_{r_1}))\\
~&=&(A\otimes J_{n_2})(((B\otimes J_{q_1})(C\otimes J_{r_1}))\otimes J_{q_2})\\
~&=&(A\otimes J_{n_2}) (B\otimes J_{q_1q_2})(C\otimes J_{r_1q_2}).
\end{array}
$$

To prove (\ref{6.7}) it is enough to prove the following three equalities:

\begin{align}\label{6.8}
\begin{array}{lcr}
n_1p_2=n_2&~&(a)\\
p_1p_2=q_1q_2&~&(b)\\
r_2=r_1q_2&~&(c)
\end{array}
\end{align}

Using the associativity of least common multiple £¨or greatest common divisor) \cite{hua57}
\begin{align}\label{6.9}
\lcm(i,\lcm(j,k))=\lcm(\lcm(i,j),k),\quad i,j,k\in \N,
\end{align}
we have
\begin{align}\label{6.10}
\lcm(qn,\lcm(pq,pr))=\lcm(\lcm(qn,pq),pr).
\end{align}
Using (\ref{6.10}), we have
$$
\begin{array}{ccl}
\mbox{LHS of (\ref{6.8}) (b)}
&=&\lcm(qn,p\lcm(q,r))\\
~&=&\lcm(qn,pqq_1)\\
~&=&q\lcm(n,pq_1)\\
~&=&qpq_1q_2.
\end{array}
$$
$$
\begin{array}{ccl}
\mbox{RHS of (\ref{6.8}) (b)}
&=&\lcm(q\lcm(n,p),pr)\\
~&=&\lcm(qpp_1,pr)\\
~&=&p\lcm(qp_1,r)\\
~&=&pqp_1p_2.
\end{array}
$$
 (\ref{6.8}) (b) follows.

Using (\ref{6.8}) (b), we have
$$
\begin{array}{ccl}
n_1p_2&=&n_1\frac{q_1q_2}{p_1}=n_1\frac{q_1q_2p}{p_1p}\\
~&=&\frac{\lcm(n,p)}{n}\frac{\lcm(n,pq_1)}{pp_1}\\
~&=&\frac{\lcm(n,pq_1)}{n}=n_2.
\end{array}
$$
which proves (\ref{6.8}) (a).

Similarly,
$$
\begin{array}{ccl}
r_1q_2&=&r_1\frac{p_1p_2}{q_1}=t_1\frac{p_1p_2q}{q_1q}\\
~&=&\frac{\lcm(q,r)}{r}\frac{\lcm(r,qp_1)}{q_1q}\\
~&=&\frac{\lcm(r,qp_1)}{r}=r_2.
\end{array}
$$
which shows (\ref{6.8}) (c).
\end{proof}
%
%
%
%

Our purpose is to construct an $S_0$ system $\left({\cal M},\varphi, {\cal V}\right)$.  We already know that $\left({\cal M},\circ\right)$ is a semigroup. We also need to define an action $\varphi:{\cal M}\times {\cal V}\ra {\cal V}$, which is a product of an arbitrary matrix with an arbitrary vector, called MV-product:

\begin{dfn}\label{d6.5} Let $A\in {\cal M}_{m\times n}\subset {\cal M}$ and $x\in {\cal V}_r\subset {\cal V}$. Assume $t=n\vee r$. Then the product of $A$ with $x$, called the MV-2 product, is defined as
\begin{align}\label{6.11}
A\vec{\circ} x:=\left(A\otimes J_{t/n}\right)\left(x\otimes \J_{t/r}\right).
\end{align}
\end{dfn}

\begin{prp}\label{p6.6} $\left( ({\cal M},\circ),\vec{\circ},{\cal V}\right)$ is an $S_0$-system.
\end{prp}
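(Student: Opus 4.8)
The plan is to verify the defining identity~(\ref{6.2}) for the action $\vec{\circ}$, namely
\begin{align}\label{plan:target}
A\vec{\circ}(B\vec{\circ}x)=(A\circ B)\vec{\circ}x,\qquad A,B\in {\cal M},\ x\in {\cal V}.
\end{align}
Since $({\cal M},\circ)$ is already known to be a semigroup by Proposition~\ref{p6.4}, no monoid/identity axiom is needed for an $S_0$-system, so~(\ref{plan:target}) is the only thing to prove. I would fix $A\in {\cal M}_{m\times n}$, $B\in {\cal M}_{p\times q}$, and $x\in {\cal V}_r$, and set up the relevant least common multiples exactly as in the proof of Proposition~\ref{p6.4}: write $\lcm(n,p)=nn_1=pp_1$, $\lcm(q,r)=qq_1=rr_1$, $\lcm(r,qp_1)=rr_2=qp_1p_2$, and $\lcm(n,pq_1)=nn_2=pq_1q_2$. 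The point is that the matrix appearing on the left of $x$ in both sides is built from the same Kronecker-padded products of $A$ and $B$ as in Proposition~\ref{p6.4}, while $x$ gets padded by an all-ones vector $\J$ rather than a matrix.

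The key computation is to expand both sides using the defining formulas~(\ref{6.6}) and~(\ref{6.11}), together with the elementary identities $J_p\otimes J_q=J_{pq}$, $\J_p\otimes \J_q=\J_{pq}$, and the mixed-product rule $(P\otimes Q)(R\otimes S)=(PR)\otimes(QS)$. Expanding the right-hand side gives, with $A\circ B=(A\otimes J_{n_1})(B\otimes J_{p_1})\in {\cal M}_{(mn_1)\times(qp_1)}$ and then pairing with $x$ over $\lcm(qp_1,r)=qp_1p_2=rr_2$,
\begin{align}\label{plan:rhs}
(A\circ B)\vec{\circ}x=(A\otimes J_{n_1p_2})(B\otimes J_{p_1p_2})(x\otimes \J_{r_2}).
\end{align}
Expanding the left-hand side, $B\vec{\circ}x=(B\otimes J_{q_1})(x\otimes \J_{r_1})\in {\cal V}_{pq_1}$, and then pairing $A$ with it over $\lcm(n,pq_1)=nn_2=pq_1q_2$,
\begin{align}\label{plan:lhs}
A\vec{\circ}(B\vec{\circ}x)=(A\otimes J_{n_2})(B\otimes J_{q_1q_2})(x\otimes \J_{r_1q_2}).
\end{align}
Comparing~(\ref{plan:lhs}) and~(\ref{plan:rhs}) term by term, the proof reduces to the three exponent identities
$$
n_1p_2=n_2,\qquad p_1p_2=q_1q_2,\qquad r_2=r_1q_2,
$$
which are precisely~(\ref{6.8})(a),(b),(c) already established in the proof of Proposition~\ref{p6.4} via the associativity of $\lcm$. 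Hence the left factor matrices agree and the $x$-padding factors agree, giving~(\ref{plan:target}).

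I expect the main obstacle to be purely bookkeeping: making sure the dimension-tracking is consistent, in particular that the index $t/n$, $t/p$ in the $\circ$-product and the index $t/n$, $t/r$ in the $\vec{\circ}$-product line up so that the very same $\lcm$ quantities $n_1,p_1,q_1,r_1,n_2,q_2,p_2,r_2$ appear here as in Proposition~\ref{p6.4} — this works because $\vec{\circ}$ treats the vector $x$ as an $r\times 1$ matrix and $\J_{t/r}=J_{t/r}\otimes$(nothing in the column direction), so the padding of $x$ by $\J$ is the column-free shadow of padding a matrix by $J$. Once that correspondence is made explicit, invoking~(\ref{6.8}) finishes the argument with no new inequality or case analysis; there is no need to re-derive the $\lcm$ identities. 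A closing sentence would note that associativity of the action together with the semigroup property of $({\cal M},\circ)$ is exactly Definition~\ref{d6.1}(1), so $\big(({\cal M},\circ),\vec{\circ},{\cal V}\big)$ is an $S_0$-system.
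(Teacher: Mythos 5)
Your proposal is correct and follows essentially the same route as the paper: the paper's own proof simply says to mimic the argument of Proposition~\ref{p6.4}, and your expansion of both sides via the Kronecker mixed-product rule, reducing the identity to the three lcm relations in (\ref{6.8}), is exactly that mimicry carried out explicitly. No gaps; the only implicit step is $J_k\J_k=\J_k$ when factoring the padded products, which is at the same level of detail the paper itself uses.
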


\begin{proof}
Let $A\in {\cal M}_{m\times n}$,  $B\in {\cal M}_{p\times q}$, $x\in {\cal V}_{r}$. We have only to prove that
\begin{align}\label{6.12}
(A\circ B)\vec{\circ} x=A\vec{\circ}(B\vec{\circ}x).
\end{align}
Mimic to the proof of Proposition \ref{p6.4}, (\ref{6.12}) can be proved.
\end{proof}

\subsection{Continuity of Generalized Linear Mapping}

In a general S- or $S_0$- system, there is no topological structure on state space $X$, and hence no continuity can be defined. But continuity is one of the most important properties of a dynamic system. Hence we need a topological structure on $X$, and then a continuity about the mapping.

\begin{dfn} \label{d6.9} Let $(G,\varphi,X)$ be an S- ($S_0$-) system.
\begin{enumerate}
\item If $X$ is a topological space and for each $g\in G$, $\varphi|_g:X\ra X$ is continuous, then $(G,\varphi,X)$  is called a weak dynamic S- ($S_0$-)system.
\item In addition, if $X$ is a Hausdorff space, then $(G,\varphi,X)$  is called a dynamic S- ($S_0$-)system.
\end{enumerate}
\end{dfn}

Recall $\left({\cal M},\vec{\circ},{\cal V}\right)$. From Section 2 we know that ${\cal V}$ is a topological space, but not Hausdorff. To show the continuity of $A\vec{\circ} x$, for a fixed $A\in {\cal M}$,  we consider the norm of $A$.

\begin{dfn}\label{d6.901} The norm of $A$, denoted by $\|A\|_{{\cal V}}$, is defined as
\begin{align}\label{6.12}
\|A\|_{{\cal V}}:=\sup_{0\neq x\in {\cal V}}\frac{\|A\vec{\circ} x\|_{{\cal V}}}{\|x\|_{{\cal V}}}.
\end{align}
\end{dfn}

First, we give two lemmas, which will be used to estimate the norm $\|A\|_{{\cal V}}$.

\begin{lem}\label{l6.902}
Assume $x\in \R^r$. Then
\begin{align}\label{6.1201}
\|x\|_{{\cal V}}=\sqrt{\frac{1}{r}}\|x\|,
\end{align}
where $\|x\|$ is the standard Euclidian norm of $x$.
\end{lem}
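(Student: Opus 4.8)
The plan is to unwind the definitions of the dimension-free inner product (Definition \ref{d2.3}) and of the norm (Definition \ref{d2.4}) in the degenerate case where both arguments already live in the same Euclidean space. Since $x\in\R^r$ means $x\in{\cal V}_r$, applying Definition \ref{d2.3} with $m=n=r$ gives $t=r\vee r=r$, hence $t/m=t/n=1$ and $x\otimes\J_1=x$. Therefore
$$
\langle x,x\rangle_{{\cal V}}=\frac{1}{r}\langle x\otimes\J_1,\,x\otimes\J_1\rangle=\frac{1}{r}\langle x,x\rangle=\frac{1}{r}\|x\|^2 .
$$
Taking square roots and invoking Definition \ref{d2.4} then yields $\|x\|_{{\cal V}}=\sqrt{\langle x,x\rangle_{{\cal V}}}=\sqrt{1/r}\,\|x\|$, which is exactly (\ref{6.1201}).

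There is essentially no obstacle here: the only observation needed is that for $x\in{\cal V}_r$ the padding Kronecker factor $\J_{t/r}$ collapses to the scalar $1$, so the dimension-free inner product restricted to ${\cal V}_r$ is nothing but the standard one rescaled by the constant $1/r$. (Equivalently, the natural inclusion ${\cal V}_r\hookrightarrow{\cal V}$ shrinks the Euclidean norm by the factor $1/\sqrt r$; this is precisely the rescaling that makes $d_{{\cal V}}$ blind to the operation $x\mapsto x\otimes\J_k$, i.e.\ the phenomenon behind Proposition \ref{p2.601}.) The lemma will subsequently be combined with an estimate for $\|A\vec{\circ} x\|_{{\cal V}}$ in order to bound the operator norm $\|A\|_{{\cal V}}$ of Definition \ref{d6.901}.
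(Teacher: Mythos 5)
Your proof is correct and is exactly the argument the paper intends: the paper's own proof simply cites (\ref{2.4}) and (\ref{2.5}), and your computation spells out that for $x\in{\cal V}_r$ the factor $\J_{t/r}$ degenerates to $1$, so $\left<x,x\right>_{{\cal V}}=\frac{1}{r}\|x\|^2$ and (\ref{6.1201}) follows by taking square roots. No discrepancies.
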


\begin{proof}. It is a consequence of (\ref{2.4}) and (\ref{2.5}).
\end{proof}

\begin{lem}\label{l6.903} Assume $A\in {\cal M}$. Then for any $J_r$
\begin{align}\label{6.1202}
\|A\otimes J_{r}\|=\|A\|.
\end{align}
\end{lem}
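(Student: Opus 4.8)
The plan is to reduce the identity to two standard facts: that $J_r=\tfrac1r\mathbf 1_{r\times r}$ is an orthogonal projector of norm $1$, and that singular values multiply under Kronecker products. Throughout I take $\|\cdot\|$ to be the spectral (operator) norm, for which $\|M\|^2=\lambda_{\max}(M^TM)$. The case $r=1$ is trivial since then $J_1=1$ and $A\otimes J_1=A$, so assume $r\ge 2$.

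First I would record the elementary algebra of $J_r$. From $\mathbf 1_{r\times r}\mathbf 1_{r\times r}=r\,\mathbf 1_{r\times r}$ we get $J_r^2=J_r$, and plainly $J_r^T=J_r$; hence $J_r$ is the orthogonal projection onto $\Span\{\mathbf 1_r\}$, with spectrum $\{1,0,\cdots,0\}$ ($0$ with multiplicity $r-1$), so $\|J_r\|=1$. Next I would compute, using the mixed-product rule for Kronecker products together with $J_r^T=J_r$ and $J_r^2=J_r$,
$$(A\otimes J_r)^T(A\otimes J_r)=(A^T\otimes J_r)(A\otimes J_r)=(A^TA)\otimes J_r.$$
The eigenvalues of the Kronecker product $(A^TA)\otimes J_r$ are exactly the products $\mu\nu$ with $\mu$ an eigenvalue of $A^TA$ and $\nu\in\{0,1\}$ an eigenvalue of $J_r$. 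Since $A^TA$ is positive semidefinite, every $\mu\ge 0$, so the largest such product is $\lambda_{\max}(A^TA)$, attained at $\nu=1$. Therefore $\|A\otimes J_r\|^2=\lambda_{\max}\bigl((A^TA)\otimes J_r\bigr)=\lambda_{\max}(A^TA)=\|A\|^2$, and taking square roots gives the claim. Equivalently and more briefly, one may simply invoke $\|X\otimes Y\|=\|X\|\,\|Y\|$ with $Y=J_r$ and $\|J_r\|=1$.

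I do not expect any genuine obstacle; the argument is routine linear algebra. The only step meriting a word of care is the assertion that the maximum over the products $\mu\nu$ is attained at $\nu=1$ rather than somewhere involving the zero eigenvalue of $J_r$ — this is exactly where positive semidefiniteness of $A^TA$ is used. If one preferred a self-contained derivation avoiding the Kronecker singular-value fact, it could be replaced by a direct estimate of $\|(A\otimes J_r)z\|$ over unit vectors $z\in\R^{nr}$ (decomposing $z$ along $\mathbf 1_r$ within each block), but that is more tedious than simply citing the known fact.
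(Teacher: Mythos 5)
Your proof is correct and follows essentially the same route as the paper's: both reduce $\|A\otimes J_r\|^2$ to $\lambda_{\max}\bigl((A^TA)\otimes (J_r^TJ_r)\bigr)=\lambda_{\max}\bigl((A^TA)\otimes J_r\bigr)$ via $J_r^TJ_r=J_r$ and the multiplicativity of extremal eigenvalues under the Kronecker product. The only difference is cosmetic: you get $\sigma_{\max}(J_r)=1$ by observing that $J_r$ is a rank-one orthogonal projector, whereas the paper invokes Perron--Frobenius facts for primitive Markov matrices; your version is simpler and also makes explicit the positive-semidefiniteness point (maximum attained at the eigenvalue $\nu=1$ of $J_r$) that the paper leaves implicit.
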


\begin{proof}. We need the following facts, which are either easily verifiable or well known facts:
\begin{itemize}
\item[(i)]~~
$$
J_r^TJ_r=J_r.
$$
\item[(ii)]~~ Denote by $\sigma(A)$ the set of eigenvalues of $A$. Then \cite{hor85}
$$
\sigma(A\otimes B)=\left\{\lambda\mu|\lambda\in \sigma(A), \mu\in \sigma(B)\right\}.
$$
It follows that
$$
\sigma_{\max}(A\otimes B)=\sigma_{\max}(A)\sigma_{\max}(B).
$$
\item[(iii)]~~ A matrix $P\in {\cal M}_{n\times n}$ is called a Markov transition matrix, if $P_{i,j}\geq 0$, $\forall i,j$, and $\dsum_{j=1}^nP_{i,j}=1$, $i=1,\cdots,n$. A markov transition matrix $P$ is a primitive matrix, if there is an integer $k\geq 1$ such that $P^k>0$ (where $P^k>0$ means $(P^k)_{i,j}>0$, $\forall i,j$ \cite{hor85}.

\item[(iv)]~~  ~$J_r$ is a primitive matrix.

\item[(v)]~~  Let $P$ be a primitive matrix. Then \cite{hor85}
$$
\sigma_{\max}(P)=1.
$$
Hence $\sigma_{\max}(J_r)=1$.
\end{itemize}

Using above facts, we have
$$
\begin{array}{ccl}
\|A\otimes J_r\|&=&\sqrt{\sigma_{\max}\left[(A^T\otimes J_r^T)(A\otimes J_r)\right]}\\
~&=&\sqrt{\sigma_{\max}\left[(A^TA)\otimes(J_r^TJ_r)\right]}\\
~&=&\sqrt{\sigma_{\max}\left[(A^TA)\right]}\\
~&=&\|A\|\\
\end{array}
$$
\end{proof}

\begin{prp}\label{p6.10} Let $A\in {\cal M}_{m\times n}$. Then
\begin{align}\label{6.13}
\|A\|_{{\cal V}}=\sqrt{\frac{n}{m}}\sqrt{\sigma_{\max}(A^TA)}.
\end{align}
\end{prp}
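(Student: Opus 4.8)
The plan is to reduce the claimed formula to an ordinary operator-norm computation on a finite-dimensional Euclidean space, using the two preceding lemmas to strip away the dimension-free bookkeeping. Fix $A\in{\cal M}_{m\times n}$ and a nonzero $x\in{\cal V}_r$. First I would substitute the definitions: by Definition~\ref{d6.5}, with $t=n\vee r$, we have $A\vec{\circ}x=(A\otimes J_{t/n})(x\otimes\J_{t/r})$, a vector in $\R^{tm/n}$. Applying Lemma~\ref{l6.902} to both numerator and denominator of the Rayleigh quotient in Definition~\ref{d6.901} converts $\|\cdot\|_{{\cal V}}$ into $\sqrt{1/\dim}$ times the standard norm, so
$$
\frac{\|A\vec{\circ}x\|_{{\cal V}}}{\|x\|_{{\cal V}}}
=\sqrt{\frac{r}{tm/n}}\,\frac{\|(A\otimes J_{t/n})(x\otimes\J_{t/r})\|}{\|x\|}
=\sqrt{\frac{rn}{tm}}\,\frac{\|(A\otimes J_{t/n})(x\otimes\J_{t/r})\|}{\|x\|}.
$$

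Next I would analyze the Euclidean norm $\|(A\otimes J_{t/n})(x\otimes\J_{t/r})\|$. Write $k=t/r$, so $\|x\otimes\J_k\|^2=k\|x\|^2$, i.e. $\|x\otimes\J_k\|=\sqrt{k}\,\|x\|$. The key observation is that the map $x\mapsto x\otimes\J_k$ is (up to the scalar $\sqrt{k}$) an isometry onto the subspace $S=\{x\otimes\J_k:x\in\R^r\}\subset\R^t$, and that $(A\otimes J_{t/n})$ restricted to behavior on such vectors can be understood via $J_{t/n}\J_k$-type identities. I would either (a) show directly that $\|(A\otimes J_{t/n})(x\otimes\J_k)\|=\|(A\otimes J_{t/n})\|\cdot\|x\otimes\J_k\|$ is attained, using Lemma~\ref{l6.903} which gives $\|A\otimes J_{t/n}\|=\|A\|=\sqrt{\sigma_{\max}(A^TA)}$, together with the fact that the worst-case direction for $A\otimes J_{t/n}$ can be chosen inside $S$; or (b) compute $\sup_{x\neq 0}\|(A\otimes J_{t/n})(x\otimes\J_k)\|^2/\|x\|^2$ explicitly as the top eigenvalue of $(x\otimes\J_k)^T(A^T\otimes J_{t/n}^T)(A\otimes J_{t/n})(x\otimes\J_k)$ viewed as a quadratic form in $x$, simplifying via $\J_k^TJ_{t/n}^TJ_{t/n}\J_k$ and the Kronecker mixed-product rule. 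Either way the supremum over $x$ (for this fixed $r$) works out to $\sqrt{k}\,\sqrt{\sigma_{\max}(A^TA)}=\sqrt{t/r}\,\sqrt{\sigma_{\max}(A^TA)}$, so the Rayleigh quotient has supremum $\sqrt{rn/(tm)}\cdot\sqrt{t/r}\cdot\sqrt{\sigma_{\max}(A^TA)}=\sqrt{n/m}\,\sqrt{\sigma_{\max}(A^TA)}$, which is independent of $r$.

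Finally I would take the supremum over all $r\geq 1$: since the per-$r$ supremum is the constant $\sqrt{n/m}\,\sqrt{\sigma_{\max}(A^TA)}$, the overall supremum in Definition~\ref{d6.901} is the same value, giving (\ref{6.13}). The main obstacle I anticipate is step (a)/(b): justifying that the worst-case $x$ for the composite operator $x\mapsto(A\otimes J_{t/n})(x\otimes\J_k)$ achieves the full product of norms — equivalently, that restricting $A\otimes J_{t/n}$ to the subspace $S$ does not decrease its operator norm. This should follow because $\J_k$ (being a right eigenvector of $J_{t/n}^TJ_{t/n}$-related Gram structure, or because $J_{t/n}\J_k$ is again proportional to an all-ones vector) lines up precisely with the maximal singular direction of $J_{t/n}$, which by facts (i)--(v) in Lemma~\ref{l6.903} has $\sigma_{\max}=1$; combined with choosing $x$ along a top eigenvector of $A^TA$, one gets the extremal vector. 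Care is needed to handle the Kronecker factorization $\R^t=\R^n\otimes\R^{t/n}$ versus $\R^t=\R^r\otimes\R^{t/r}$ simultaneously, but these are compatible because $n\mid t$ and $r\mid t$, and the all-ones vectors tensor together cleanly ($\J_a\otimes\J_b=\J_{ab}$, $J_a\otimes J_b=J_{ab}$).
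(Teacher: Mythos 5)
There is a genuine gap, and it sits exactly at the step you flagged as the main obstacle. Your plan requires the per-$r$ supremum $\sup_{0\neq x\in{\cal V}_r}\|A\vec{\circ}x\|_{{\cal V}}/\|x\|_{{\cal V}}$ to equal $\sqrt{n/m}\,\|A\|$ for \emph{every} $r$, i.e.\ that restricting $A\otimes J_{t/n}$ to the subspace $S=\{x\otimes \J_{t/r}:x\in\R^r\}\subset\R^t$ does not decrease its operator norm. That is false in general. The maximizers of $\|(A\otimes J_{\alpha})z\|/\|z\|$ (with $\alpha=t/n$) are exactly the vectors $v\otimes \J_{\alpha}$ with $v$ in the top singular subspace of $A$; such a vector is constant on consecutive blocks of length $\alpha$, whereas elements of $S$ are constant on blocks of length $k=t/r$, and these patterns are compatible only when $n\mid r$ or when $v$ itself happens to be block-constant. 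Your heuristic justification only uses that the $J_{t/n}$-factor of the extremal vector is an all-ones vector; the $A$-factor $v$ is not, and that is where the argument breaks. Concrete counterexample: $A=(1,-1)\in{\cal M}_{1\times 2}$ (so $m=1$, $n=2$, $\sqrt{n/m}\,\|A\|=2$) and $r=3$, hence $t=6$. Then $A\vec{\circ}x=\tfrac{2}{3}(x_1-x_3)\J_3$, so $\sup_{0\neq x\in\R^3}\|A\vec{\circ}x\|_{{\cal V}}/\|x\|_{{\cal V}}=\tfrac{2\sqrt{6}}{3}<2$: the per-$r$ supremum is \emph{not} independent of $r$, so the intermediate statement you want to prove in step (a)/(b) is simply not true.

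The repair is easy and is precisely the paper's proof: for each $r$ you only need an upper bound, obtained by noting $\|x\otimes\J_{t/r}\|_{{\cal V}}=\|x\|_{{\cal V}}$ and enlarging $S$ to all of ${\cal V}_t$, so the quotient is at most $\sup_{0\neq z\in{\cal V}_t}\|(A\otimes J_{t/n})z\|_{{\cal V}}/\|z\|_{{\cal V}}=\sqrt{n/m}\,\|A\otimes J_{t/n}\|=\sqrt{n/m}\,\|A\|$ by Lemma~\ref{l6.903}; the matching lower bound comes from the single choice $r=n$, where $A\vec{\circ}x=Ax$ is the classical product and Lemma~\ref{l6.902} gives exactly $\sqrt{n/m}\,\|A\|=\sqrt{n/m}\sqrt{\sigma_{\max}(A^TA)}$. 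Your dimension bookkeeping (the factor $\sqrt{rn/(tm)}$ via Lemma~\ref{l6.902}) is correct; only the claimed equality on the subspace $S$ must be abandoned in favor of this two-sided (upper bound for all $r$, lower bound at $r=n$) argument.
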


\begin{proof}. First, it follows from Lemma \ref{l6.902} that for $x\in {\cal V}$,
assume $x\in \R^n$, then
\begin{align}\label{6.1301}
\begin{array}{ccl}
\|A\|_{{\cal V}}&\geq& \sup_{0\neq x\in \R^n}\frac{\|A\vec{\circ} x\|_{{\cal V}}}{\|x\|_{{\cal V}}}\\
~&=& \sup_{0\neq x\in \R^n}\frac{\sqrt{\frac{1}{m}}\|Ax\|}{\sqrt{\frac{1}{n}}\|x\|}\\
~&=&\sqrt{\frac{n}{m}}\|A\|= \sqrt{\frac{n}{m}}\sqrt{\sigma_{\max}(A^TA)}.
\end{array}
\end{align}
The last equality can be found from \cite{hor85}.

On the other hand, for any $x\in {\cal V}$, say, $x\in {\cal V}_r$, then
\begin{align}\label{6.1302}
\begin{array}{ccl}
\frac{\|A\vec{\circ}x\|_{{\cal V}}}{ \|x\|_{{\cal V}}}&\leq&
\sup_{x\in {\cal V}_r} \frac{\|(A\otimes J_{t/n})(x\otimes \J_{t/r})\|_{{\cal V}}}{ \|x\otimes \J_{t/r}\|_{{\cal V}}}\\
~&\leq& \sup_{z\in {\cal V}_t} \frac{\|(A\otimes J_{t/n})z\|_{{\cal V}}}{ \|z\|_{{\cal V}}}\\
~&=& \sup_{z\in {\cal V}_t} \frac{\sqrt{\frac{n}{mt}}\|(A\otimes J_{t/n})z\|}{ \sqrt{\frac{1}{t}}\|z\|}\\
~&=&\sqrt{\frac{n}{m}}\|A\otimes J_{t/n}\|=\sqrt{{\frac{n}{m}}}\|A\|.
\end{array}
\end{align}
(\ref{6.13}) follows from (\ref{6.1301}) and (\ref{6.1302}) immediately.
\end{proof}

Using this proposition, the following result is obvious.

\begin{thm}\label{t6.11} $\left({\cal M},\vec{\circ},{\cal V}\right)$ is a weak dynamic $S_0$-system.
\end{thm}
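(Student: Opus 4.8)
The plan is to verify the two conditions in Definition~\ref{d6.9}(1): that $\left({\cal M},\vec{\circ},{\cal V}\right)$ is an $S_0$-system (already established in Proposition~\ref{p6.6}), and that for each fixed $A\in{\cal M}$ the partial action $\vec{\circ}|_A:{\cal V}\ra{\cal V}$, $x\mapsto A\vec{\circ}x$, is continuous with respect to the metric topology ${\cal T}_d$ on ${\cal V}$. The first condition is done, so the entire content is the continuity claim. Since ${\cal T}_d$ is a metric topology, it suffices to show $\vec{\circ}|_A$ is Lipschitz, i.e. there is a constant $L_A$ with $d_{{\cal V}}(A\vec{\circ}x,\,A\vec{\circ}y)\le L_A\, d_{{\cal V}}(x,y)$ for all $x,y\in{\cal V}$.

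The key step is to reduce the Lipschitz estimate to the operator norm $\|A\|_{{\cal V}}$ computed in Proposition~\ref{p6.10}. First I would check that $\vec{\circ}|_A$ is ``linear enough'': for $x\in{\cal V}_r$ and $y\in{\cal V}_s$ with $t=r\vee s$, one has $A\vec{\circ}x - A\vec{\circ}y$ and $A\vec{\circ}(x\lvminus y)$ agreeing up to a Kronecker factor of the form $\cdot\otimes\J_k$ (using $(x\otimes\J_a)\otimes\J_b = x\otimes\J_{ab}$ and the fact that $J_{t/n}$ distributes over the V-addition after lifting both arguments to ${\cal V}_t$), and hence have the same ${\cal V}$-norm by Lemma~\ref{l6.902} (which gives $\|z\otimes\J_k\|_{{\cal V}}=\|z\|_{{\cal V}}$). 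Thus $d_{{\cal V}}(A\vec{\circ}x,A\vec{\circ}y)=\|A\vec{\circ}x\lvminus A\vec{\circ}y\|_{{\cal V}}=\|A\vec{\circ}(x\lvminus y)\|_{{\cal V}}\le\|A\|_{{\cal V}}\,\|x\lvminus y\|_{{\cal V}}=\|A\|_{{\cal V}}\,d_{{\cal V}}(x,y)$, so $L_A=\|A\|_{{\cal V}}$ works, which is finite by Proposition~\ref{p6.10}. Finally, given an open set $O\subseteq{\cal V}$ and a point $x$ with $A\vec{\circ}x\in O$, pick an ${\cal T}_d$-ball around $A\vec{\circ}x$ inside $O$; its preimage contains the ball of radius (that radius)$/(\|A\|_{{\cal V}}+1)$ about $x$, establishing continuity; the case $A=0$ or $\|A\|_{{\cal V}}=0$ is trivial since the map is then the zero map, which is continuous.

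The main obstacle is the bookkeeping in the reduction that $A\vec{\circ}(x\lvminus y)$ and $(A\vec{\circ}x)\lvminus(A\vec{\circ}y)$ are V-equivalent (equal up to a tensor-by-$\J_k$ factor): the two sides a priori live in ${\cal V}_{tm/n}$ for possibly different values of $t$ (one from $n\vee r$ versus $n\vee s$ versus $n\vee(r\vee s)$), so one must lift everything to a common dimension and carefully commute the Kronecker products $A\otimes J_\bullet$ past $x\otimes\J_\bullet$, invoking the mixed-product rule for $\otimes$ and the idempotence-type identities $\J_a\otimes\J_b=\J_{ab}$, $J_a\otimes J_b=J_{ab}$. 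This is exactly the same flavor of least-common-multiple juggling as in the proof of Proposition~\ref{p6.4}, and I expect it to go through by the associativity of $\vee$ (equation~(\ref{6.9})); once that identification is in hand, the norm bound and the topological conclusion are immediate from Proposition~\ref{p6.10} and the definition of the metric topology.
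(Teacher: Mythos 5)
Your proposal is correct and follows essentially the same route as the paper: the paper also deduces continuity of $x\mapsto A\vec{\circ}x$ in the metric topology ${\cal T}_d$ from the bound $\|A\vec{\circ}x\lvminus A\vec{\circ}y\|_{{\cal V}}\leq \|A\|_{{\cal V}}\,\|x\lvminus y\|_{{\cal V}}$ supplied by Proposition~\ref{p6.10}, merely phrasing it via sequential continuity ($x_n\ra x_0$ implies $A\vec{\circ}x_n\ra A\vec{\circ}x_0$). The only difference is that you spell out the Kronecker-product bookkeeping showing $(A\vec{\circ}x)\lvminus(A\vec{\circ}y)$ agrees with $A\vec{\circ}(x\lvminus y)$, a step the paper leaves implicit (and which indeed goes through, even with exact equality rather than mere V-equivalence).
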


\begin{proof}. We have only to prove the continuity. Since the topology adopted is the metric topology, the sequence continuity is enough. Let $x_n\ra x_0$. Then
$$
\|A\vec{\circ}x_n\lvminus A\vec{\circ} x_0\|_{{\cal V}} \leq \|A\|_{{\cal V}}\|x_n\lvminus x_0\|_{{\cal V}}\ra 0.
$$
\end{proof}

In fact, $\left({\cal M},\vec{\circ},{\cal V}\right)$ is a very general class of dimension-varying systems. We give an example to depict it.

\begin{exa}\label{e6.1101} Consider a constant linear system
\begin{align}\label{6.100}
x(t+1)=A\vec{\circ}x(t),
\end{align}
where
$$
A=\begin{bmatrix}
1&0&-1&0\\
0&-1&0&1
\end{bmatrix}.
$$

It is obvious that this system is quite different from the classical linear systems, because in a classical linear system the matrix $A$ must be square, and hence the trajectory remains on fixed dimensional Euclidian space. But the trajectory of this system is evolving on ${\cal V}$.

Find the trajectory for $x(0)=x_0=(1,0,1)^T$.

It is easy to calculate that
$$
x(1)=A\vec{\circ}x(0)=\frac{2}{3}(1,1,1,1,1,1)^T.
$$

Next, it is easy to see that $\R^6$ is invariant under the action of $A\vec{\circ}:=\vec{\circ}_{A}$. Moreover, when $\vec{\circ}_{A}$ is restricted on $\R^6$ it has a matrix expression as
$$
\vec{\circ}_{A}\big|_{\R^6}:=A_*,
$$
where
$$
A_*=\frac{1}{3}\begin{bmatrix}
2&1&0&-2&-1&0\\
2&1&0&-2&-1&0\\
2&1&0&-2&-1&0\\
0&-1&-2&0&1&2\\
0&-1&-2&0&1&2\\
0&-1&-2&0&1&2\\
\end{bmatrix}.
$$
Then the overall trajectory after $t=1$ is
$$
x(t+1)=(A_*)^tx(1), \quad t\geq 1.
$$
\end{exa}

Though in this paper the second STP and the MV-2 product are used to deduce the dynamic systems to meet the least square requirement, the dynamic systems constructed by first STP and MV-1 product have been discussed in \cite{chepr1}. Many properties of these two kinds of linear systems are similar.

\subsection{Quotient Vector Space}

Since ${\cal V}$ is not a standard vector space and $({\cal V}, {\cal T}_d)$ is not a Hausdorff space, it is reasonable to glue equivalent points together to form a real vector space as a Hausdorff space. To this end, we have to find proper equivalence relation.

\begin{dfn}\label{d7.4}  $x,~y\in {\cal V}$ are said to be equivalent, denoted by $x\lra y$, if there exist $\J_{\a}$ and $\J_{\b}$ such that
\begin{align}\label{7.3}
x\otimes \J_{\a}=y\otimes \J_{\b}.
\end{align}

The equivalence class is denoted by
$$
\bar{x}=\left\{y\in {\cal V}\;|\; y\lra x\right\}.
$$

The quotient space is denoted by
$$
\Omega:={\cal V}/\lra.
$$
\end{dfn}

\begin{rem}\label{r7.5}
It is necessary to verify that the relation determined by (\ref{7.3}) is an equivalence relation (i.e., it is reflexive, symmetric, and transitive). The verification is straightforward.
\end{rem}

\begin{prp}\label{p7.6} Let $x,~y\in {\cal V}$.
$d_{{\cal V}}(x,y)=0$, if and only if, $x\lra y$.
\end{prp}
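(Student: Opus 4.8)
The plan is to unfold both sides of the claimed equivalence in terms of Kronecker products and reduce everything to a single Euclidean space of dimension $t = m\vee n$, where $x\in{\cal V}_m$ and $y\in{\cal V}_n$. First I would establish the easy direction: if $x\lra y$, then by Definition \ref{d7.4} there are $\J_\a,\J_\b$ with $x\otimes\J_\a = y\otimes\J_\b$. Both sides live in ${\cal V}_s$ for $s=m\a=n\b$. I would then observe that $d_{{\cal V}}(x,y)=\|x\lvminus y\|_{{\cal V}}$ is computed by blowing $x$ and $y$ up to their least common multiple dimension; since $x$ and $y$ already agree after being blown up to dimension $s$ (a common multiple of $m$ and $n$), and since the inner product (\ref{2.4}) is defined consistently — blowing up further by $\J_k$ only replicates entries and rescales by $1/k$, leaving $\left<\cdot,\cdot\right>_{{\cal V}}$ unchanged — it follows that $x\otimes\J_{t/m}$ and $y\otimes\J_{t/n}$ are equal in ${\cal V}_t$, hence $d_{{\cal V}}(x,y)=0$. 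Here I would want a small preliminary lemma (or an appeal to the consistency already implicit in Definition \ref{d2.3}) stating $\left<u\otimes\J_k, v\otimes\J_k\right>_{{\cal V}} = \left<u,v\right>_{{\cal V}}$, so that the value of $d_{{\cal V}}$ does not depend on which common multiple one uses to compare the two vectors.

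For the converse, suppose $d_{{\cal V}}(x,y)=0$. With $t=m\vee n$, $\a=t/m$, $\b=t/n$, this says $\frac{1}{t}\|x\otimes\J_\a - y\otimes\J_\b\|^2 = 0$, i.e. $x\otimes\J_\a = y\otimes\J_\b$ as vectors in $\R^t$. But $\J_\a$ and $\J_\b$ are exactly the one-entry vectors appearing in Definition \ref{d7.4}, so this equality is precisely the defining relation (\ref{7.3}) with this particular choice of $\a,\b$. Hence $x\lra y$. This direction is essentially immediate once the distance is written out via (\ref{2.4})--(\ref{2.6}); the only thing to note is that a sum of squares being zero forces each term to vanish, giving genuine entrywise equality rather than mere equality of norms.

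I expect the main (though still modest) obstacle to be the bookkeeping in the forward direction: one must be careful that the pair $(\a,\b)$ witnessing $x\lra y$ need not be the pair $(t/m,t/n)$ used to compute $d_{{\cal V}}$, so the argument genuinely needs the invariance of $\left<\cdot,\cdot\right>_{{\cal V}}$ under further tensoring with $\J_k$. Concretely, if $x\otimes\J_\a = y\otimes\J_\b$ with $m\a = n\b = s$, then $s$ is a common multiple of $m$ and $n$, so $t\mid s$; writing $s = t\ell$ one checks $x\otimes\J_\a = (x\otimes\J_{\a/ (\a\cdot m/t)})\otimes\J_\ell = (x\otimes\J_{t/m})\otimes\J_\ell$ and similarly for $y$, whence $(x\otimes\J_{t/m})\otimes\J_\ell = (y\otimes\J_{t/n})\otimes\J_\ell$, and cancelling the common $\otimes\J_\ell$ (legitimate because the blocks are constant) gives $x\otimes\J_{t/m} = y\otimes\J_{t/n}$, i.e. $d_{{\cal V}}(x,y)=0$. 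Once this indexing is handled, the proposition follows.
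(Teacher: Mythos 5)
Your proof is correct and follows the same route as the paper, which simply asserts that the equivalence follows from definitions (\ref{2.4})--(\ref{2.6}); you have just written out the details the paper leaves implicit. In particular, your reduction of an arbitrary witnessing pair $(\a,\b)$ to the canonical pair $(t/m,t/n)$ by factoring through $s=t\ell$ and cancelling the common $\otimes\J_{\ell}$ is exactly the bookkeeping needed to make the ``follows from definitions'' claim rigorous.
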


\begin{proof}. Observing (\ref{2.4})-(\ref{2.6}), the conclusion follows from definitions.
\end{proof}

Next, we transfer the vector space structure from ${\cal V}$ to $\Omega$.

\begin{dfn}\label{d7.7} Let $\bar{x},\bar{y}\in \Omega$ and $a\in \R$. Then
\begin{enumerate}
\item
\begin{align}\label{7.4}
\bar{x}\lvplus \bar{y}:=\overline{x\lvplus y}.
\end{align}
\item
\begin{align}\label{7.5}
\bar{x}\lvminus \bar{y}:=\overline{x\lvminus y}.
\end{align}
\item
\begin{align}\label{7.6}
a\bar{x}:=\overline{ax}.
\end{align}
\end{enumerate}
\end{dfn}

As a corollary of Proposition \ref{p7.6}, it is ready to check the following result:

\begin{cor}\label{c7.8}
\begin{enumerate}
\item Operators defined by (\ref{7.4})-(\ref{7.6}) are properly defined.
\item $\Omega$ with addition/subtraction defined by (\ref{7.4})-(\ref{7.5}) and scalar product defined by (\ref{7.6}) is a vector space.
\end{enumerate}
\end{cor}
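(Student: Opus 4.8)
The plan is to reduce everything to the corresponding (pseudo-)structure on $\mathcal{V}$ already established in Proposition \ref{p2.2}, together with the characterization of the equivalence class provided by Proposition \ref{p7.6}. The whole corollary has two parts: first, that the three operations in Definition \ref{d7.7} are well-defined on equivalence classes (independent of the chosen representatives); second, that $\Omega$ equipped with them satisfies the seven vector-space axioms — crucially, including the uniqueness of the zero element and of additive inverses, which failed on $\mathcal{V}$.

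For the well-definedness (part 1), I would fix $x \lra x'$ and $y \lra y'$ and show $x \lvplus y \lra x' \lvplus y'$, $x \lvminus y \lra x' \lvminus y'$, and $ax \lra ax'$. The cleanest route is via Proposition \ref{p7.6}: $x\lra x'$ iff $d_{\mathcal{V}}(x,x')=0$, and similarly for $y,y'$. Then, using the triangle inequality for $d_{\mathcal{V}}$ together with the estimate
$$
d_{\mathcal{V}}(x\lvplus y,\; x'\lvplus y') \le d_{\mathcal{V}}(x,x') + d_{\mathcal{V}}(y,y'),
$$
which follows directly from the defining formula (\ref{2.2})–(\ref{2.6}) of $\lvplus$ and $\|\cdot\|_{\mathcal{V}}$ (expand both sides after tensoring up to a common multiple of all four dimensions), one gets $d_{\mathcal{V}}(x\lvplus y, x'\lvplus y')=0$, hence $x\lvplus y \lra x'\lvplus y'$. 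Subtraction is the same with $y$ replaced by $-y$, and scalar multiplication is immediate since $d_{\mathcal{V}}(ax,ax') = |a|\,d_{\mathcal{V}}(x,x') = 0$. Alternatively, one can argue directly from (\ref{7.3}) by tensoring the two witnessing identities up to a common dimension; I would mention this but push the distance-based argument as the slick version.

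For part 2, axioms (1), (2), (4), (5), (6), (7) transfer verbatim from the pseudo-vector-space structure on $\mathcal{V}$ (Proposition \ref{p2.2}) by applying the quotient map to each identity — e.g. $\bar{x}\lvplus\bar{y} = \overline{x\lvplus y} = \overline{y\lvplus x} = \bar{y}\lvplus\bar{x}$, and likewise for associativity and the distributive/scalar laws. The one genuinely new point is axiom (3): one must exhibit a \emph{unique} zero in $\Omega$ and, for each $\bar{x}$, a \emph{unique} inverse. I would take $\bar{0}$ to be the class of the scalar $0 \in \mathcal{V}_1$ (equivalently any zero vector of any dimension — note all of these are $\lra$-equivalent, which is exactly why they collapse to one class in the quotient); then $\bar{x}\lvplus\bar{0} = \overline{x\lvplus 0} = \bar{x}$ since $x \lvplus 0_k = x\otimes \J_{k'}$ for appropriate $k'$, which is $\lra x$. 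Uniqueness of the zero: if $\bar{e}$ also satisfies $\bar{x}\lvplus\bar{e}=\bar{x}$ for all $\bar{x}$, then in particular $\bar{0}\lvplus\bar{e}=\bar{0}$, so $\bar{e}=\overline{0\lvplus e}$; computing, $0_1 \lvplus e = e \otimes \J_1 + 0 = e$ tensored suitably, forcing $e \lra 0$, i.e. $\bar{e}=\bar{0}$. For inverses, set $-\bar{x} := \overline{-x}$; then $\bar{x}\lvplus(\overline{-x}) = \overline{x \lvminus x}$, and $x\lvminus x = x\otimes\J_1 - x\otimes\J_1 = 0$ (same dimension, no tensoring needed), so this equals $\bar{0}$. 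Uniqueness of the inverse then follows formally from axioms (1)–(3) in the usual way (if $\bar{x}\lvplus\bar{z}=\bar{0}$ then $\bar{z} = \bar{z}\lvplus\bar{0} = \bar{z}\lvplus(\bar{x}\lvplus\overline{-x}) = (\bar{z}\lvplus\bar{x})\lvplus\overline{-x} = \overline{-x}$).

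The main obstacle — really the only non-routine step — is verifying that the class $\bar{0}$ genuinely acts as a two-sided identity and is \emph{unique}, i.e. pinning down which vectors of $\mathcal{V}$ lie in $\bar{0}$ and checking that $x \lvplus 0_k \lra x$ holds for every $x$ and every dimension $k$ of the zero vector used; this is where the dimension bookkeeping in $\lvplus$ (the tensoring by $\J_{t/m}$, $\J_{t/n}$) must be handled carefully rather than waved away. Everything else is either inherited from Proposition \ref{p2.2} by pushing identities through the quotient map, or is the standard abstract-algebra consequence of the group axioms once the identity and inverses are in place.
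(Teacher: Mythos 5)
Your proof is correct and takes essentially the approach the paper intends: the paper offers no explicit argument, merely stating the corollary is ``ready to check'' as a consequence of Proposition \ref{p7.6}, and your verification---well-definedness via the pseudo-distance (equivalently, by tensoring the witnessing identities of (\ref{7.3})), transfer of the pseudo-vector-space identities of Proposition \ref{p2.2} through the quotient map, and the explicit identification of the zero class (all zero vectors) and of inverses to recover the uniqueness that fails on ${\cal V}$---is exactly that omitted check, carried out in full and without gaps.
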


To get a topological structure on quotient space $\Omega$, we define the norm of $\bar{x}$ as follows:
\begin{align}\label{7.7}
\|\bar{x}\|_{{\cal V}}:= \|x\|_{{\cal V}}.
\end{align}

\begin{prp}\label{p7.9} Let $\bar{x}\in \Omega$. Then the norm of $\bar{x}$, defined by (\ref{7.7}), is well defined.
\end{prp}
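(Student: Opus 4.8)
The plan is to verify that the right-hand side of (\ref{7.7}) is independent of the choice of representative: once this is established, (\ref{7.7}) assigns to each class $\bar{x}\in\Omega$ a single nonnegative real number, which is exactly what ``well defined'' means (the expression $\|x\|_{{\cal V}}$ for a fixed vector $x\in\R^r$ is itself unambiguous, being given outright by the formulas (\ref{2.4})--(\ref{2.5})). So I would take two representatives $x$ and $y$ of the same class, i.e. $x\lra y$, and show $\|x\|_{{\cal V}}=\|y\|_{{\cal V}}$. By Definition \ref{d7.4} there are integers $\a,\b\geq 1$ with $x\otimes\J_{\a}=y\otimes\J_{\b}$.

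The crux is the elementary identity
\[
\|z\otimes\J_k\|_{{\cal V}}=\|z\|_{{\cal V}},\qquad z\in{\cal V},\ k\geq 1 .
\]
If $z\in\R^r$, then $z\otimes\J_k\in\R^{rk}$ is obtained by repeating each coordinate of $z$ exactly $k$ times, so $\|z\otimes\J_k\|^2=k\|z\|^2$; Lemma \ref{l6.902} then gives $\|z\otimes\J_k\|_{{\cal V}}=\sqrt{\frac{1}{rk}}\,\|z\otimes\J_k\|=\sqrt{\frac{1}{rk}}\,\sqrt{k}\,\|z\|=\sqrt{\frac{1}{r}}\,\|z\|=\|z\|_{{\cal V}}$. (Alternatively one computes $\langle z\otimes\J_k,\,z\otimes\J_k\rangle_{{\cal V}}$ straight from (\ref{2.4}) and checks it equals $\langle z,z\rangle_{{\cal V}}$; the normalizing factor $1/t$ in (\ref{2.4}) is chosen precisely so that duplicating coordinates is free.) Applying this identity to both sides of $x\otimes\J_{\a}=y\otimes\J_{\b}$ yields
\[
\|x\|_{{\cal V}}=\|x\otimes\J_{\a}\|_{{\cal V}}=\|y\otimes\J_{\b}\|_{{\cal V}}=\|y\|_{{\cal V}},
\]
as required, completing the proof.

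I do not expect a genuine obstacle: the statement collapses to the $\J_k$-invariance of the (pseudo-)norm, an immediate arithmetic consequence of (\ref{2.4}). The only point meriting a word of care is that Definition \ref{d7.4} supplies the relation $x\otimes\J_{\a}=y\otimes\J_{\b}$ for \emph{some} exponents rather than for all of them; but the invariance identity holds for every exponent, so nothing goes wrong. One could also route the argument through Proposition \ref{p7.6} ($x\lra y\iff d_{{\cal V}}(x,y)=0$), but that would require a reverse-triangle estimate for the pseudo-norm under $\lvplus$, whereas the direct computation above is shorter and entirely self-contained; I would therefore present it as above.
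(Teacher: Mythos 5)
Your proof is correct and takes essentially the same route as the paper: both hinge on the single computation that $\|z\otimes\J_k\|_{{\cal V}}=\|z\|_{{\cal V}}$, because repeating each coordinate $k$ times multiplies the squared Euclidean norm by $k$ while the dimension normalization $1/\sqrt{rk}$ absorbs it. The only cosmetic difference is that the paper passes through the smallest representative $z$ of the class ($x=z\otimes\J_r$), whereas you compare two arbitrary representatives directly via $x\otimes\J_{\a}=y\otimes\J_{\b}$, which avoids even invoking the existence of a minimal element.
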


\begin{proof}. Assume the smallest vector in $\bar{x}$ is $z\in {\cal V}_t$. Then any  $x\in \bar{x}$ can be expressed as $x=z\otimes \J_r$ for certain $r$. According to (\ref{2.5})-(\ref{2.6})
$$
\|z\|_{{\cal V}}=\frac{1}{\sqrt{t}}\|z\|.
$$
Now for $x$ we have
$$
\begin{array}{ccl}
\|x\|_{{\cal V}}&=&\|z\otimes \J_r\|_{{\cal V}}\\
~&=&\frac{1}{\sqrt{tr}}\|z\otimes \J_r\|\\
~&=&\frac{1}{\sqrt{tr}}\sqrt{r(x_1^2+x_2^2+\cdots+x_t^2)}\\
~&=&\frac{1}{\sqrt{t}}\|z\|\\
~&=&\|z\|_{{\cal V}}.
\end{array}
$$
That is, $\|\bar{x}\|_{{\cal V}}$ is independent of the choice of $x$. Hence, (\ref{7.7}) is properly defined.
\end{proof}

Using (\ref{7.7}), a distance can also be defined on $\Omega$ as
\begin{align}\label{7.8}
d_{{\cal V}}(\bar{x},~\bar{y}):= \|\bar{x}\lvminus \bar{y}\|_{{\cal V}}=d_{{\cal V}}(x,y).
\end{align}

Then we can also verify the following result:
\begin{cor}\label{c7.10}
\begin{enumerate}
\item The distance defined by (\ref{7.8}) is properly defined.
\item $\Omega$ with the corresponding metric topology  is a Hausdorff space.
\end{enumerate}
\end{cor}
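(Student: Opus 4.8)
The plan is to prove the two parts of Corollary \ref{c7.10} separately, each reducing to facts already established. For part (1), well-definedness of the distance $d_{{\cal V}}(\bar x,\bar y)$, the issue is that the formula $d_{{\cal V}}(\bar x,\bar y):=\|\bar x\lvminus\bar y\|_{{\cal V}}$ depends a priori on the representatives $x\in\bar x$ and $y\in\bar y$. But $\bar x\lvminus\bar y=\overline{x\lvminus y}$ is already known to be well-defined by Corollary \ref{c7.8}(1), and the norm $\|\cdot\|_{{\cal V}}$ on $\Omega$ is well-defined by Proposition \ref{p7.9}. Composing these two facts gives immediately that $d_{{\cal V}}(\bar x,\bar y)$ is independent of the choice of representatives. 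The identity $d_{{\cal V}}(\bar x,\bar y)=d_{{\cal V}}(x,y)$ then follows from $\|\bar z\|_{{\cal V}}=\|z\|_{{\cal V}}$ applied to $z=x\lvminus y$, together with Definition \ref{d2.5}.

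For part (2), I would first check that $d_{{\cal V}}$ is a genuine metric on $\Omega$ (not merely a pseudo-metric). Symmetry and the triangle inequality descend from the corresponding pseudo-metric properties of $d_{{\cal V}}$ on ${\cal V}$ via the identity $d_{{\cal V}}(\bar x,\bar y)=d_{{\cal V}}(x,y)$ from part (1). The one new point is positivity: $d_{{\cal V}}(\bar x,\bar y)=0$ must force $\bar x=\bar y$. This is exactly where Proposition \ref{p7.6} enters — $d_{{\cal V}}(x,y)=0$ iff $x\lra y$ iff $\bar x=\bar y$ — so the quotient precisely collapses the degeneracy of the pseudo-distance. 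Once $d_{{\cal V}}$ is a metric on $\Omega$, the induced metric topology is automatically Hausdorff: given $\bar x\neq\bar y$, set $\varepsilon:=d_{{\cal V}}(\bar x,\bar y)>0$ and take the open balls $B(\bar x,\varepsilon/2)$ and $B(\bar y,\varepsilon/2)$, which are disjoint by the triangle inequality.

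I expect the only mild subtlety — and it is genuinely mild — to be bookkeeping around what "the metric topology on $\Omega$" means: one should note that the quotient topology on $\Omega$ inherited from $({\cal V},{\cal T}_d)$ coincides with the metric topology induced by $d_{{\cal V}}$ on $\Omega$, so that there is no ambiguity in the statement. Since $d_{{\cal V}}$ on ${\cal V}$ is constant on equivalence classes (Proposition \ref{p7.6} shows the zero-distance relation is exactly $\lra$, and part (1) shows the distance between classes is well-defined), this identification is routine and I would merely remark on it rather than belabor it. In short, the whole corollary is an assembly of Corollary \ref{c7.8}, Proposition \ref{p7.9}, and Proposition \ref{p7.6}, with the ``hard part'' being nothing more than recognizing that Proposition \ref{p7.6} is precisely the statement that passing to the quotient repairs the failure of the Hausdorff axiom noted in Proposition \ref{p2.601}.
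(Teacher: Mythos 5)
Your proof is correct and is essentially the verification the paper has in mind: the paper states Corollary \ref{c7.10} without proof (``we can also verify the following result''), and your assembly of Corollary \ref{c7.8} and Proposition \ref{p7.9} for well-definedness, plus Proposition \ref{p7.6} to upgrade the pseudo-metric to a genuine metric on $\Omega$ (so that disjoint balls separate distinct classes), is exactly the intended argument. The extra remark identifying the metric topology with the quotient topology is harmless but not needed, since the statement only concerns the metric topology induced by $d_{{\cal V}}$ on $\Omega$.
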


Before ending this subsection we propose a vector equivalence for two matrices, which is used in the sequel for describing control systems on quotient space.

\begin{dfn}\label{d7.11} Let $B, ~C\in {\cal M}$. $B$ and $C$ are said to be vector equivalent, denoted by $B\lra C$, if there exist $\J_{\a}$ and $\J_{\b}$ such that
$$
B\otimes \J_{\a}=C\otimes \J_{\b}.
$$
The equivalent class of $B$ is denoted by
$$
\bar{B}:=\{C\;|\;C\lra B\}.
$$
\end{dfn}

Note that when the vector equivalence of two matrices are considered, it means both matrices are considered as sets of vectors, consisting of their columns.

\subsection{Quotient Space of Matrices}

Let $A\in {\cal M}_{m\times m}$, $m|n$, and $n=km$, where $m,n,k\in \N$. Using (\ref{4.6}), a straightforward computation shows the following result:

\begin{prp}\label{p72.1} Assume $A\in {\cal M}_{m\times m}$ and $n=km$. Then
\begin{align}\label{72.1}
{\pi}^m_n(A)=A\otimes J_{k}.
\end{align}
\end{prp}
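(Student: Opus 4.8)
The plan is to specialize the least-squares formula (\ref{4.6}) to the divisible case and then collapse the resulting matrix expression using elementary Kronecker-product identities. Since $m\mid n$ with $n=km$, $k\in\N$, we have $m\le n$, so by Definition \ref{d4.101} and (\ref{4.6})
$$
{\pi}^m_n(A)=\Pi^m_nA\left((\Pi^m_n)^T\Pi^m_n\right)^{-1}(\Pi^m_n)^T
$$
(the borderline $k=1$ is trivial, as then $\Pi^m_n=I_m$ and $J_1=1$). The first step is therefore to write $\Pi^m_n$ out explicitly from (\ref{3.6}). Here $t=m\vee n=n$, hence $\a=t/m=k$ and $\b=t/n=1$; because $\J_1^T$ is just the scalar $1$, formula (\ref{3.6}) degenerates to $\Pi^m_n=\tfrac11(I_n\otimes\J_1^T)(I_m\otimes\J_k)=I_m\otimes\J_k$, an $n\times m$ matrix (a pure replication operator, not an averaging one).

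The second step is to evaluate the normal-equation factor. Using the mixed-product rule $(P\otimes Q)(R\otimes S)=(PR)\otimes(QS)$ on conformable blocks, together with $\J_k^T\J_k=k$, we get $(\Pi^m_n)^T\Pi^m_n=(I_m\otimes\J_k^T)(I_m\otimes\J_k)=I_m\otimes(\J_k^T\J_k)=kI_m$, which is nonsingular (as Lemma \ref{l3.3} already guarantees) with inverse $\tfrac1kI_m$. Substituting back yields ${\pi}^m_n(A)=\tfrac1k(I_m\otimes\J_k)\,A\,(I_m\otimes\J_k^T)$, and it remains only to recognise this as a Kronecker product.

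For the last step I would apply the mixed-product rule twice: writing $A=A\otimes 1$ gives $(I_m\otimes\J_k)A=A\otimes\J_k$, and then $(A\otimes\J_k)(I_m\otimes\J_k^T)=A\otimes(\J_k\J_k^T)=A\otimes\J_{k\times k}$. Since $\J_{k\times k}=kJ_k$ by the definition of $J_k$ in Definition \ref{d6.3}, we conclude ${\pi}^m_n(A)=\tfrac1k\bigl(A\otimes\J_{k\times k}\bigr)=A\otimes J_k$, which is (\ref{72.1}). All juxtapositions above are ordinary matrix products (the blocks are conformable), so the STP convention of Remark \ref{r6.201} plays no role.

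I expect no obstacle of real substance: the only point demanding attention is the degenerate parameter $\b=1$, where one must remember that $I_n\otimes\J_1^T$ equals $I_n$ rather than a genuine averaging block, so that $\Pi^m_n$ comes out as the replication map $I_m\otimes\J_k$. Once that is settled, every manipulation is a routine instance of the Kronecker mixed-product identity on matrices of matching sizes; an entrywise verification of $A_\pi=A\otimes J_k$ is also possible but would only reintroduce the index bookkeeping that the Kronecker-algebra route avoids.
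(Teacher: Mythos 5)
Your proof is correct and follows essentially the same route as the paper's: specialize (\ref{3.6}) to get $\Pi^m_n=I_m\otimes\J_k$, note $(\Pi^m_n)^T\Pi^m_n=kI_m$, and collapse $\tfrac1k(I_m\otimes\J_k)\,A\,(I_m\otimes\J_k^T)$ to $A\otimes J_k$ by the Kronecker mixed-product rule. The only cosmetic difference is that you treat $A$ as $A\otimes 1$, whereas the paper replaces $A$ by $A\otimes I_k$ and appeals to the STP convention of Remark \ref{r6.201}; your version is, if anything, tidier since every product involved is classically conformable.
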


\begin{proof}
Since $n=km$, then $\b=frac{m\vee n}{n}=1$ and $\a=frac{m\vee n}{m}=k$. Using formula (\ref{3.6}), we have
$$
\begin{array}{ccl}
\Pi^m_n&=&\frac{1}{\b}\left(I_n\otimes \J_{\b}^T\right)\left(I_m\otimes \J_{\a}\right)\\
~&=&I_n\left(I_m\otimes \J_{k}\right)=I_m\otimes \J_k.
\end{array}
$$
Plugging it into formula (\ref{4.6}) yields
$$
\begin{array}{ccl}
\pi^m_n(A)&=&\Pi^m_n A \left((\Pi^m_n)^T\Pi^m_n\right)^{-1}(\Pi^m_n)^T\\
~&=&(I_m\otimes \J_k)A(\frac{1}{k}I_m)(I_m\otimes \J_k^T)\\
~&=&\frac{1}{k}(I_m\otimes \J_k)A(I_m\otimes \J_k^T)\\
~&=&\frac{1}{k}(I_m\otimes \J_k)(A\otimes I_k)(I_m\otimes \J_k^T)\\
~&=&\frac{1}{k}(A\otimes \J_k)(I_m\otimes \J_k^T)\\
~&=&\frac{1}{k}\left(A\otimes (\J_k\J_k^T)\right)\\
~&=&A\otimes J_k.
\end{array}
$$
Note that in the above deduction $A$ was replaced by $A\otimes I_k$. This is because of Remark \ref{r6.201}.
\end{proof}

Recall
$
{\cal M}:=\bigcup_{m=1}^{\infty}\bigcup_{n=1}^{\infty}{\cal M}_{m\times n}.
$
Then a natural topology on ${\cal M}$ is defined as follows: (i) Each ${\cal M}_{m\times n}$ is a clopen subset; (ii) Within each clopen subset ${\cal M}_{m\times n}$ the Euclidean topology of $\R^{mn}$ is adopted.

Motivated by Proposition \ref{p72.1}, we propose an equivalence relation on ${\cal M}$ as follows.

\begin{dfn}\label{d72.2} Let $A,~B\in {\cal M}$. $A$ and $B$ are said to be equivalent, denoted by $A\approx B$, if there exist $J_{\a}$ and $J_{\b}$, such that
\begin{align}\label{72.2}
A\otimes J_{\a}=B\otimes J_{\b}.
\end{align}

The equivalence class is denoted by
$$
\hat{A}=\{B\;|\;B\approx A\}.
$$

The quotient space is denoted by
$$
\Xi={\cal M}/\approx.
$$
\end{dfn}

\begin{rem}\label{r72.3} It is ready to verify that (\ref{72.2}) defines an equivalence relation.
\end{rem}

Define a product on $\Xi$ as
\begin{align}\label{72.201}
\hat{A}\circ \hat{B}:= \widehat{A\circ B}.
\end{align}

Similarly to the above argument for vector case, one can verify the following easily:

\begin{prp}\label{r72.301}
\begin{enumerate}
\item (\ref{72.201}) is properly defined.
\item $\left(\Xi,~\circ\right)$ is a semi-group.
\end{enumerate}
\end{prp}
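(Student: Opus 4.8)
The plan is to reduce both claims to the corresponding facts already established for the vector equivalence and for the semi-group $({\cal M},\circ)$, since the definitions of $\approx$ and of $\circ$ on $\Xi$ mirror exactly those of $\lra$ on ${\cal M}$ (Definition~\ref{d7.11}) and of the induced operation on the vector quotient (Definition~\ref{d7.7} and Corollary~\ref{c7.8}). Thus I would follow the same two-step pattern: first show well-definedness of the quotient product, then lift associativity from $({\cal M},\circ)$.

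For part (1), well-definedness of $\hat A\circ\hat B:=\widehat{A\circ B}$, I would argue that if $A'\approx A$ and $B'\approx B$ then $A'\circ B'\approx A\circ B$. The key sub-lemma is that the second STP is compatible with the equivalence-generating operation $X\mapsto X\otimes J_r$: concretely, $(A\otimes J_r)\circ B = A\circ B$ and $A\circ(B\otimes J_s)=A\circ B$ for all admissible $r,s$. This follows directly from the definition (\ref{6.6}) of $\circ$ together with the identity $J_p\otimes J_q=J_{pq}$ and the mixed-product rule for Kronecker products, in exactly the same spirit as the associativity computation inside the proof of Proposition~\ref{p6.4}; one just has to recompute the lcm factors $t/n$ when $n$ is replaced by $rn$. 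Iterating this sub-lemma (and using that $A'\approx A$ means $A'\otimes J_\a = A\otimes J_\b$ for suitable $\a,\b$, hence $A'$ and $A$ have a common "multiple" $A\otimes J_\b$ reachable from either side) gives $A'\circ B' \approx A\circ B$, so the class $\widehat{A\circ B}$ depends only on $\hat A$ and $\hat B$.

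For part (2), that $(\Xi,\circ)$ is a semi-group, associativity is immediate once (1) is in hand: for any representatives $A,B,C$,
\[
(\hat A\circ\hat B)\circ\hat C=\widehat{(A\circ B)\circ C}=\widehat{A\circ(B\circ C)}=\hat A\circ(\hat B\circ\hat C),
\]
where the middle equality is Proposition~\ref{p6.4}. No identity element is claimed, so nothing further is needed.

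The only genuine obstacle is the well-definedness sub-lemma, i.e.\ verifying $(A\otimes J_r)\circ B=A\circ B$; the bookkeeping of which Kronecker factors $J_k$ appear is slightly fiddly because the exponent $t/n$ in (\ref{6.6}) changes when $n\to rn$, and one must check the two resulting expressions land in the same matrix space ${\cal M}_{tm/n\times tq/p}$ and agree entrywise. But this is the same type of gcd/lcm manipulation already carried out in the proof of Proposition~\ref{p6.4}, so I expect it to go through routinely; the paper is right to call it "easily verified."
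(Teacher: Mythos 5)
Your overall route is the right one, and it is essentially what the paper intends: the paper offers no explicit proof of Proposition~\ref{r72.301}, merely asserting that it can be verified ``similarly to the vector case,'' and the natural verification is exactly your two steps (compatibility of $\circ$ with $\approx$, then descent of associativity from Proposition~\ref{p6.4}).

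One correction to your key sub-lemma, though: the identity $(A\otimes J_r)\circ B = A\circ B$ is in general \emph{false} as an equality of matrices, because the two sides do not live in the same space. For $A\in{\cal M}_{m\times n}$, $B\in{\cal M}_{p\times q}$, the product $A\circ B$ is of size $(tm/n)\times(tq/p)$ with $t=n\vee p$, while $(A\otimes J_r)\circ B$ is of size $(sm/n)\times(sq/p)$ with $s=rn\vee p$, and $s>t$ unless $rn\vee p=n\vee p$. What the Kronecker computation (mixed-product rule, $J_a\otimes J_b=J_{ab}$, and idempotence $J_aJ_a=J_a$) actually gives is
\begin{align*}
(A\otimes J_r)\circ B=\bigl(A\otimes J_{s/n}\bigr)\bigl(B\otimes J_{s/p}\bigr)
=\bigl[(A\otimes J_{t/n})(B\otimes J_{t/p})\bigr]\otimes J_{s/t}
=(A\circ B)\otimes J_{s/t},
\end{align*}
hence $(A\otimes J_r)\circ B\approx A\circ B$, and symmetrically $A\circ(B\otimes J_r)\approx A\circ B$. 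Since only equivalence of the products is needed for $\widehat{A\circ B}$ to depend solely on $\hat A$ and $\hat B$, this corrected statement repairs your argument with no further change: from $A'\otimes J_{\a}=A\otimes J_{\b}$ and $B'\otimes J_{\g}=B\otimes J_{\d}$, transitivity of $\approx$ yields $A'\circ B'\approx A\circ B$. Part (2) then follows exactly as you wrote, lifting associativity from Proposition~\ref{p6.4}; no identity element is claimed, so nothing more is required. (An equivalent organization, closer to the paper's later proof of Proposition~\ref{p7.601}, is to take the smallest representatives $\Lambda_1\in\hat A$, $\Lambda_2\in\hat B$ and show $A\circ B=(\Lambda_1\circ\Lambda_2)\otimes J_k$; both are the same lcm/Kronecker bookkeeping.)
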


\subsection{Linear System on Quotient Space}

%

Now we are ready to define a linear system on quotient space $\Omega$. It has been proved that $\Omega$ is a vector space and topologically it is a Hausdorff space. Hence, $\Omega$ is a nice state space for investigation. A more important fact is: at $\Omega$ a point $\bar{x}$ could be the image of points in Euclidean spaces of different dimensions, hence, it is proper to describe cross-dimension dynamic systems.

We use $\Xi$ and $\Omega$ to build linear systems on quotient space.

Denote the action of  $\Xi$ on $\Omega$ as
\begin{align}\label{7.401}
\hat{A}\vec{\circ}\bar{x}:=\overline{A\vec{\circ} x}.
\end{align}

\begin{prp}\label{p7.601} The action of  $\Xi$ on $\Omega$, defined by (\ref{7.401}), is properly defined.
\end{prp}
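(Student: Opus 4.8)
To say that (\ref{7.401}) is properly defined means: whenever $A\approx A'$ in ${\cal M}$ and $x\lra x'$ in ${\cal V}$, we must have $A\vec{\circ}x\lra A'\vec{\circ}x'$ in ${\cal V}$, so that $\overline{A\vec{\circ}x}=\overline{A'\vec{\circ}x'}$ in $\Omega$; the right-hand side is automatically an element of $\Omega$ since $A\vec{\circ}x\in{\cal V}$. Unwinding Definitions \ref{d72.2} and \ref{d7.4}, the hypotheses yield $\alpha,\beta,\gamma,\delta\in\N$ with $A\otimes J_{\alpha}=A'\otimes J_{\beta}$ and $x\otimes\J_{\gamma}=x'\otimes\J_{\delta}$. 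The plan is to first prove one computational lemma on how $\vec{\circ}$ interacts with the blow-up maps $A\mapsto A\otimes J_{k}$ and $x\mapsto x\otimes\J_{l}$, and then blow $A,A'$ and $x,x'$ up to their common forms and apply the lemma on each side.

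The lemma is: for all $A\in{\cal M}_{m\times n}$, $x\in{\cal V}_{r}$ and $k,l\in\N$,
$$(A\otimes J_{k})\vec{\circ}(x\otimes\J_{l})=(A\vec{\circ}x)\otimes\J_{c},\qquad c:=\frac{(nk)\vee(rl)}{n\vee r}.$$
First I would note that $c\in\N$: writing $t:=n\vee r$ and $s:=(nk)\vee(rl)$ one has $n\mid nk\mid s$ and $r\mid rl\mid s$, so $s$ is a common multiple of $n$ and $r$, hence $t\mid s$. Then I would expand the left-hand side by Definition \ref{d6.5} (its column dimension is $nk$, the vector's dimension is $rl$, and $(nk)\vee(rl)=s$), obtaining $\big((A\otimes J_{k})\otimes J_{s/(nk)}\big)\big((x\otimes\J_{l})\otimes\J_{s/(rl)}\big)$, collapse the nested Kronecker products using $J_{a}\otimes J_{b}=J_{ab}$ and $\J_{a}\otimes\J_{b}=\J_{ab}$ to reach $(A\otimes J_{s/n})(x\otimes\J_{s/r})$, then substitute $s/n=c(t/n)$ and $s/r=c(t/r)$, re-split the factors as $(A\otimes J_{t/n})\otimes J_{c}$ and $(x\otimes\J_{t/r})\otimes\J_{c}$, and finish with the mixed-product identity $(P\otimes J_{c})(Q\otimes\J_{c})=(PQ)\otimes(J_{c}\J_{c})=(PQ)\otimes\J_{c}$, applied with $P=A\otimes J_{t/n}$ and $Q=x\otimes\J_{t/r}$ so that $PQ=A\vec{\circ}x$; here $PQ$ and $J_{c}\J_{c}$ are ordinary (dimension-compatible) matrix products and $J_{c}\J_{c}=\J_{c}$. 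This is the only step that requires genuine work, and it is the same kind of least-common-multiple and Kronecker-product bookkeeping as in the proof of Proposition \ref{p6.4}; I expect it to be the main obstacle.

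Granting the lemma, the rest is formal. Applying it with $(k,l)=(\alpha,\gamma)$ and then with $(k,l)=(\beta,\delta)$, and using that $(A\vec{\circ}x)\otimes\J_{c}\lra A\vec{\circ}x$ for every $c$ (Definition \ref{d7.4}), we get $A\vec{\circ}x\lra(A\otimes J_{\alpha})\vec{\circ}(x\otimes\J_{\gamma})$ and $A'\vec{\circ}x'\lra(A'\otimes J_{\beta})\vec{\circ}(x'\otimes\J_{\delta})$. But $A\otimes J_{\alpha}=A'\otimes J_{\beta}$ and $x\otimes\J_{\gamma}=x'\otimes\J_{\delta}$, so the two right-hand sides are literally the same vector of ${\cal V}$; by symmetry and transitivity of $\lra$ (Remark \ref{r7.5}) it follows that $A\vec{\circ}x\lra A'\vec{\circ}x'$. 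Hence $\overline{A\vec{\circ}x}$ depends only on $\hat{A}$ and $\bar{x}$, which is the assertion.

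A remark on a possible shortcut: the vector variable alone can be treated ``softly'', since $x\lra x'$ forces $d_{{\cal V}}(x,x')=0$ by Proposition \ref{p7.6}, and the operator-norm estimate from the proof of Theorem \ref{t6.11} then gives $d_{{\cal V}}(A\vec{\circ}x,A\vec{\circ}x')\leq\|A\|_{{\cal V}}\,d_{{\cal V}}(x,x')=0$, whence $A\vec{\circ}x\lra A\vec{\circ}x'$ by Proposition \ref{p7.6}. The matrix variable, however, still requires either the lemma above or the companion norm identity $\|A\otimes J_{k}\|_{{\cal V}}=\|A\|_{{\cal V}}$ (in the spirit of Lemma \ref{l6.903}) plus a further argument, so it seems cleanest to prove the single lemma directly.
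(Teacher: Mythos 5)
Your proof is correct and follows essentially the same route as the paper: both rest on the single computational identity $(A\otimes J_{k})\vec{\circ}(x\otimes \J_{l})=(A\vec{\circ}x)\otimes \J_{c}$, established by exactly the Kronecker/lcm bookkeeping you describe (the paper's chain ending in $(\Lambda\vec{\circ}z)\otimes\J_{k}$ is your lemma in disguise). The only cosmetic difference is the direction of reduction: the paper descends to smallest representatives $\Lambda\in\hat{A}$, $z\in\bar{x}$, whereas you ascend to the common blow-ups $A\otimes J_{\alpha}=A'\otimes J_{\beta}$ and $x\otimes\J_{\gamma}=x'\otimes\J_{\delta}$, which spares you the (easy but unproved in the paper) existence of a minimal element in each equivalence class.
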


\begin{proof} We have only to show that (\ref{7.401}) is independent of the choice of $A\in \hat{A}$ and $x\in \bar{x}$. That is, to show that if $A\approx B$ and $x\lra y$, then
\begin{align}\label{7.501}
A\vec{\circ} x \lra B\vec{\circ}y.
\end{align}

It is obvious that in equivalence class $\hat{A}$ there exists a smallest $\Lambda\in {\cal M}_{n\times p}$ such that $A=\Lambda\otimes J_s$ and $B=\Lambda\otimes J_{\a}$. Similarly, there exists $z\in {\cal V}_q$ such that $x=z\otimes \J_t$ and $y=z\otimes \J_{\b}$. Denote $\xi=p\vee q$, $\eta=ps\vee qt$, and $\eta=k\xi$. Then we have
$$
\begin{array}{ccl}
A\vec{\circ} x&=&(\Lambda\otimes J_s)\vec{\circ} (z\otimes \J_t)\\
~&=&\left( \Lambda\otimes J_s\otimes J_{\eta/ps} \right)\left(z\otimes \J_t\otimes \J_{\eta/qt}\right)\\
~&=&\left( \Lambda\otimes J_{\xi/p}\otimes J_{k} \right)\left(z\otimes \J_{\xi/q}\otimes \J_{k}\right)\\
~&=&\left[\left( \Lambda\otimes J_{\xi/p}\right)\left(z\otimes \J_{\xi/q}\right)\right] \otimes \left(J_k\J_{k}\right)\\
~&~&=\left(\Lambda \vec{\circ} z\right)\otimes \J_{k}.
\end{array}
$$
Hence
$$
A\vec{\circ} x\lra \Lambda \vec{\circ} z.
$$
Similarly, we have
$$
B\vec{\circ} y\lra \Lambda \vec{\circ} z.
$$
(\ref{7.501}) follows.
\end{proof}

Now it is clear that $(\Xi,\vec{\circ},\Omega)$ is an $S_0$ system. Expressing it in classical form yields

\begin{itemize}
\item[(i)]~~ Discrete time linear system:
\begin{align}\label{7.502}
\bar{x}(t+1)=\hat{A}\vec{\circ} \bar{x}(t).
\end{align}
\item[(ii)]~~ Continuous time linear system:
\begin{align}\label{7.503}
\dot{\bar{x}}(t)=\hat{A}\vec{\circ} \bar{x}(t).
\end{align}
\end{itemize}

To prove such a system is a dynamic system, we have to show that for a given $\hat{A}$ the mapping $\bar{x}\mapsto \hat{A}\vec{\circ} \bar{x}$ is
continuous. To this end, we define the norm of $\hat{A}$. The following definition is natural.

\begin{dfn}\label{d7.602} Assume $\hat{A}\in \Xi$. Its norm is defined as
\begin{align}\label{7.801}
\|\hat{A}\|_{{\cal V}}:= \|A\|_{{\cal V}}.
\end{align}
\end{dfn}

\begin{prp}\label{p7.801} Let $\hat{A}\in \Xi$. Then the norm of $\hat{A}$, defined by (\ref{7.801}), is well defined.
\end{prp}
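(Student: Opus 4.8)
The statement to prove is Proposition~\ref{p7.801}: that $\|\hat{A}\|_{{\cal V}}:=\|A\|_{{\cal V}}$ is well defined, i.e.\ independent of the representative $A\in\hat{A}$. The plan is to exploit the explicit formula for $\|A\|_{{\cal V}}$ furnished by Proposition~\ref{p6.10}, namely $\|A\|_{{\cal V}}=\sqrt{n/m}\,\sqrt{\sigma_{\max}(A^TA)}$ for $A\in{\cal M}_{m\times n}$, and check that this quantity is invariant under $A\mapsto A\otimes J_k$. Since any two matrices in the same equivalence class $\hat{A}$ can be written as $\Lambda\otimes J_{\a}$ and $\Lambda\otimes J_{\b}$ for a common smallest representative $\Lambda\in{\cal M}_{p\times q}$ (the analogue of the ``smallest vector'' argument used in Proposition~\ref{p7.9} and Proposition~\ref{p7.601}), it suffices to prove $\|\Lambda\otimes J_k\|_{{\cal V}}=\|\Lambda\|_{{\cal V}}$ for every $k$, and then conclude by transitivity.

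So the single computation needed is the following. If $\Lambda\in{\cal M}_{p\times q}$, then $\Lambda\otimes J_k\in{\cal M}_{pk\times qk}$, so by Proposition~\ref{p6.10},
\begin{align*}
\|\Lambda\otimes J_k\|_{{\cal V}}=\sqrt{\frac{qk}{pk}}\,\sqrt{\sigma_{\max}\!\left((\Lambda\otimes J_k)^T(\Lambda\otimes J_k)\right)}=\sqrt{\frac{q}{p}}\,\sqrt{\sigma_{\max}\!\left((\Lambda^T\Lambda)\otimes(J_k^TJ_k)\right)}.
\end{align*}
Using $J_k^TJ_k=J_k$ (fact (i) in the proof of Lemma~\ref{l6.903}), $\sigma_{\max}(X\otimes Y)=\sigma_{\max}(X)\sigma_{\max}(Y)$ (fact (ii)), and $\sigma_{\max}(J_k)=1$ (fact (v), since $J_k$ is primitive), we get $\sigma_{\max}((\Lambda^T\Lambda)\otimes J_k)=\sigma_{\max}(\Lambda^T\Lambda)$, hence $\|\Lambda\otimes J_k\|_{{\cal V}}=\sqrt{q/p}\,\sqrt{\sigma_{\max}(\Lambda^T\Lambda)}=\|\Lambda\|_{{\cal V}}$. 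In fact this is essentially Lemma~\ref{l6.903} restated at the level of the ${\cal V}$-norm, so one may alternatively just cite Lemma~\ref{l6.903} together with the dimension-counting factor $\sqrt{n/m}$ in Proposition~\ref{p6.10} and observe the two scalings cancel.

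Finally, to close the argument: given $A,B\in\hat{A}$ with $A\otimes J_{\a}=B\otimes J_{\b}$, write $A=\Lambda\otimes J_{s}$, $B=\Lambda\otimes J_{s'}$ for the minimal $\Lambda$; then $\|A\|_{{\cal V}}=\|\Lambda\|_{{\cal V}}=\|B\|_{{\cal V}}$ by the computation above, so $\|\hat A\|_{{\cal V}}$ does not depend on the chosen representative. I do not expect any real obstacle here: the only mild point of care is justifying the existence of a common smallest representative $\Lambda$ for the whole class $\hat{A}$ (rather than just for a pair), but this is the same elementary divisibility observation already invoked twice in the excerpt, and it can be taken as established. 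The proof is therefore a short citation of Proposition~\ref{p6.10} and Lemma~\ref{l6.903} plus one line of Kronecker-product spectral bookkeeping.
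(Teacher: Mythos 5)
Your proposal is correct and follows essentially the same route as the paper's own proof: reduce to the smallest representative $\Lambda$ of the class and show $\|\Lambda\otimes J_k\|_{{\cal V}}=\|\Lambda\|_{{\cal V}}$ by combining Proposition~\ref{p6.10} with Lemma~\ref{l6.903} (your Kronecker-spectral computation is just that lemma unpacked, as you yourself note). The only difference is expository detail, so nothing further is needed.
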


\begin{proof}
Assume $\Lambda\in \hat{A}$ is the smallest element of the class. Then each $A\in \hat{A}$ can be expressed as
$A=\Lambda\otimes J_r$.

Using Lemma \ref{l6.903} and Proposition \ref{p6.10}, for $A\in{\cal M}_{m\times n}$ and any $J_s$, we have
$$
\|A\otimes J_s\|_{{\cal V}}=\sqrt{\frac{n}{m}}\|A\otimes J_s\|=\sqrt{\frac{n}{m}}\|A\|=\|A\|_{{\cal V}}.
$$
Hence, we can get
$$
\|A\|_{{\cal V}}=\|\Lambda\otimes J_r\|_{{\cal V}}= \|\Lambda\|_{\cal{V}},
$$
which is independent of the particular choice of $A$.
\end{proof}

Then we have the following  result:

\begin{cor}\label{c7.901}
The discrete time $S_0$-system (\ref{7.502}) or continuous time $S_0$-system (\ref{7.503}) on quotient space $\Omega$ are properly defined dynamic systems.
\end{cor}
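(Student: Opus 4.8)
The plan is to reduce Corollary~\ref{c7.901} to the continuity of the map $\bar{x}\mapsto \hat{A}\vec{\circ}\bar{x}$ on the Hausdorff space $(\Omega, \mathcal{T}_d)$ (Corollary~\ref{c7.10}), and then to derive that continuity from the norm estimate $\|\hat{A}\|_{\mathcal{V}}$, whose well-definedness is Proposition~\ref{p7.801}. Recall that, by Proposition~\ref{p7.601}, $(\Xi,\vec{\circ},\Omega)$ is already an $S_0$-system; what Definition~\ref{d6.9} requires in addition for it to be a (weak) dynamic $S_0$-system is that $\Omega$ be a topological (resp.\ Hausdorff) space and that for each fixed $\hat{A}\in\Xi$ the partial action $\varphi|_{\hat{A}}:\Omega\to\Omega$ be continuous. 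The Hausdorff part is exactly Corollary~\ref{c7.10}(2), so only continuity remains.

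First I would establish the Lipschitz bound
$$
d_{\mathcal{V}}\big(\hat{A}\vec{\circ}\bar{x},\ \hat{A}\vec{\circ}\bar{y}\big)\ \le\ \|\hat{A}\|_{\mathcal{V}}\,d_{\mathcal{V}}(\bar{x},\bar{y}),\qquad \bar{x},\bar{y}\in\Omega.
$$
Using the definitions (\ref{7.401}), (\ref{7.8}) this is equivalent to $\|\,\overline{A\vec{\circ}x}\lvminus\overline{A\vec{\circ}y}\,\|_{\mathcal{V}}\le\|A\|_{\mathcal{V}}\|\,\bar{x}\lvminus\bar{y}\,\|_{\mathcal{V}}$ for representatives $A\in\hat{A}$, $x\in\bar{x}$, $y\in\bar{y}$, which by (\ref{7.7}) and (\ref{7.5}) comes down to the corresponding inequality downstairs in $\mathcal{V}$. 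There one chooses representatives of common dimension (take $x,y\in\mathcal{V}_r$ with $r$ a common multiple, so that $x\lvminus y = x-y$ literally), writes $A\vec{\circ}x = (A\otimes J_{t/n})(x\otimes\J_{t/r})$ from Definition~\ref{d6.5}, and observes that $A\vec{\circ}x\lvminus A\vec{\circ}y = A\vec{\circ}(x-y)$ because $\vec{\circ}$ is linear in its vector argument and $\otimes$ distributes over subtraction; then apply the operator-norm bound $\|A\vec{\circ}z\|_{\mathcal{V}}\le\|A\|_{\mathcal{V}}\|z\|_{\mathcal{V}}$, which is just Definition~\ref{d6.901} together with Proposition~\ref{p6.10}. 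This is essentially the same computation that proved Theorem~\ref{t6.11}, lifted through the quotient.

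From the Lipschitz estimate, sequence continuity on the metric space $(\Omega, d_{\mathcal{V}})$ is immediate: if $\bar{x}_k\to\bar{x}_0$ then $d_{\mathcal{V}}(\hat{A}\vec{\circ}\bar{x}_k,\ \hat{A}\vec{\circ}\bar{x}_0)\le\|\hat{A}\|_{\mathcal{V}}\,d_{\mathcal{V}}(\bar{x}_k,\bar{x}_0)\to 0$, and since $\mathcal{T}_d$ is a metric topology, sequence continuity gives continuity. Combined with Corollary~\ref{c7.10} this shows $(\Xi,\vec{\circ},\Omega)$ satisfies Definition~\ref{d6.9}(2), i.e.\ it is a dynamic $S_0$-system, which is precisely the assertion that the discrete-time system (\ref{7.502}) and the continuous-time system (\ref{7.503}) are properly defined dynamic systems; the continuous-time case needs no separate treatment since $\dot{\bar{x}}(t)\in\Omega$ and the right-hand side is the same continuous (indeed bounded linear) operator, exactly as in Corollary~\ref{c4.2}.

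The only genuinely delicate point is making sure the Lipschitz inequality descends to the quotient without hidden dependence on representatives: one must check that $\overline{A\vec{\circ}x}\lvminus\overline{A\vec{\circ}y}$, computed via (\ref{7.5}), coincides with $\overline{A\vec{\circ}x\lvminus A\vec{\circ}y}$ and that its $\mathcal{V}$-norm is representative-independent --- both of which follow from Corollary~\ref{c7.8}, Proposition~\ref{p7.9}, and Proposition~\ref{p7.801}, but should be invoked explicitly. Everything else is bookkeeping with Kronecker products of the form $\J_a\otimes\J_b=\J_{ab}$ and $J_a\otimes J_b=J_{ab}$, already used repeatedly above. I would therefore write the proof as: (i) cite Corollary~\ref{c7.10} for the Hausdorff/topology structure; (ii) prove the Lipschitz bound via Proposition~\ref{p7.801} and Proposition~\ref{p6.10}, reducing to the already-proven estimate in $\mathcal{V}$; (iii) conclude continuity by sequence continuity in the metric topology; (iv) note the continuous-time case is identical. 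The expected obstacle is step (ii)'s representative-independence check, which is routine but must be stated carefully.
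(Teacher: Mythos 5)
Your proposal is correct and follows essentially the same route the paper intends: Corollary \ref{c7.901} is stated there without a separate proof precisely because it is meant to follow from Corollary \ref{c7.10} (the quotient is a Hausdorff metric space), Proposition \ref{p7.601} (the action of $\Xi$ on $\Omega$ is well defined), and Proposition \ref{p7.801} (the norm of $\hat{A}$ is well defined), combined with the same operator-norm bound and sequence-continuity argument used in Theorem \ref{t6.11}. Your explicit Lipschitz estimate and the representative-independence check merely fill in what the paper leaves implicit.
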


\section{Dynamics of Dimension-Varying Process}

Though the cross-dimensional systems discussed in previous sections are very general, this section is particulary focused on the transient dynamics of systems, which has classical fixed dimensions during normal time, and only on dimension transient period the system changes its model from one to another, which have different dimensions. This kind of dimension-varying systems are practically important.

\subsection{Modeling Transient dynamics via Equivalent Dynamic Systems}

\begin{dfn}\label{d9.1}
\begin{enumerate}
\item Assume a (standard) discrete time linear control system
\begin{align}\label{79.8}
\begin{array}{l}
x(t+1)=A(t) x(t)+B(t)u(t),\quad u(t)\in \R^m\\
y(t)=H(t) x(t),\quad y(t)\in \R^p,~~x(t)\in \R^n,
\end{array}
\end{align}
is given. The following system on quotient space $\Omega$ is called the projecting system of (\ref{79.8}):
\begin{align}\label{79.10}
\begin{array}{l}
\bar{x}(t+1)=\hat{A}(t)\vec{\circ} \bar{x}(t)+\bar{B}(t)u(t)\\
y(t)=\hat{H}(t)\vec{\circ} \bar{x}(t),\quad \bar{x}(t)\in \Omega.
\end{array}
\end{align}
\item Assume a (standard) continuous time linear control system
\begin{align}\label{79.9}
\begin{array}{l}
\dot{x}=A(t) x(t)+B(t)u(t),\quad x\in \R^n, ~u(t)\in \R^m\\
y(t)=H(t) x(t),\quad y(t)\in \R^p
\end{array}
\end{align}
is given. The following system on quotient space $\Omega$ is called the projecting system of (\ref{79.9}):
\begin{align}\label{79.11}
\begin{array}{l}
\dot{\bar{x}}(t)=\hat{A}(t)\vec{\circ} \bar{x}(t)+\bar{B}(t)u(t)\\
y(t)=\hat{H}(t)\vec{\circ} \bar{x}(t),\quad \bar{x}(t)\in \Omega.
\end{array}
\end{align}
\item Assume a discrete time linear control system on quotient space $\Omega$ as (\ref{79.10}) is given. System (\ref{79.8}) is called its lifting system on $\R^n$, if $A\in \hat{A}$, $B\in \bar{B}$, and $H\in \hat{H}$.
 \item Assume a continuous time linear control system on quotient space $\Omega$ as (\ref{79.11}) is given. System (\ref{79.9}) is called its lifting system on $\R^n$, if $A\in \hat{A}$, $B\in \bar{B}$, and $H\in \hat{H}$.
\end{enumerate}

\end{dfn}

Since a system on quotient space is a set of equivalent systems with various dimensions, dimension-varying is not a problem for such a system. Then the transient dynamics can be considered as a dynamic process on quotient space. This is our main idea for dealing with transient dynamics.

\begin{dfn}\label{d9.2} Let $\Theta_0$ be a linear control system on quotient space. $\Theta_n$ be its lifting on $\R^n$. Then all such lifting systems are said to be equivalent.
\end{dfn}

It follows from definition that
\begin{prp}\label{p9.3} Linear control systems $(A,B,C)$ and $(A',B',C')$ are equivalent, if and only if, there exist $r,~s\in \N$, such that
\begin{align}\label{9.1}
\begin{cases}
A\otimes J_r=A'\otimes J_s\\
B\otimes \J_r=B'\otimes \J_s\\
C\otimes J_r=C'\otimes J_s.
\end{cases}
\end{align}
\end{prp}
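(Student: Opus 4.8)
The plan is to prove both implications by translating ``equivalent'' (Def.~\ref{d9.2}) down through Def.~\ref{d9.1}(3)--(4), Def.~\ref{d72.2} and Def.~\ref{d7.11} into the three separate conditions $A\approx A'$, $B\lra B'$, $C\approx C'$, and then re-assembling them into a single pair $(r,s)$ as in (\ref{9.1}). The ``if'' direction is immediate: if $r,s\in\N$ realize the three identities of (\ref{9.1}), then by the very definitions of $\approx$ and $\lra$ we get $\hat A=\widehat{A'}$, $\bar B=\overline{B'}$ and $\hat C=\widehat{C'}$, so both $(A,B,C)$ and $(A',B',C')$ are lifting systems (Def.~\ref{d9.1}(3)) of one and the same quotient system $(\hat A,\bar B,\hat C)$ --- each standard system being trivially a lifting of its own projecting system, since $A\in\hat A$, $B\in\bar B$, $C\in\hat C$. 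Hence they are equivalent by Def.~\ref{d9.2}.

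For the ``only if'' direction, equivalence means the two triples lift a common quotient system $\Theta_0=(\hat A_0,\bar B_0,\hat C_0)$; since $A,A'\in\hat A_0$, $B,B'\in\bar B_0$, $C,C'\in\hat C_0$, transitivity of the equivalences (Remark~\ref{r72.3}, and likewise for $\lra$) gives $A\approx A'$, $B\lra B'$, $C\approx C'$. So there exist $\alpha_i,\beta_i\in\N$ with $A\otimes J_{\alpha_1}=A'\otimes J_{\beta_1}$, $B\otimes\J_{\alpha_2}=B'\otimes\J_{\beta_2}$, $C\otimes J_{\alpha_3}=C'\otimes J_{\beta_3}$. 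The crux is that these three scalings are forced to coincide: comparing sizes in the first identity, with $A\in{\cal M}_{n\times n}$, $A'\in{\cal M}_{n'\times n'}$, gives $n\alpha_1=n'\beta_1$; comparing row counts in the second (here also $m=m'$) gives $n\alpha_2=n'\beta_2$; comparing column counts in the third gives $n\alpha_3=n'\beta_3$. Hence $\alpha_i/\beta_i=n'/n$ for all three $i$. This single observation --- that the common state dimension pins down every scaling ratio --- is the only non-bookkeeping step, and is where I expect the actual content to sit.

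To finish, write $n'/n=a/b$ in lowest terms, so $\alpha_i=ac_i$ and $\beta_i=bc_i$ for some $c_i\in\N$. Using $J_{uv}=J_u\otimes J_v$ and $\J_{uv}=\J_u\otimes\J_v$, each identity becomes $(A\otimes J_a)\otimes J_{c_1}=(A'\otimes J_b)\otimes J_{c_1}$, and similarly for $B$ and $C$; since $na=n'b$ the two sides of each are matrices of one common size. Now $X\mapsto X\otimes J_c$ is injective on matrices of any fixed size (the $(i,j)$-block of $X\otimes J_c$ is the constant block with entry $X_{ij}/c$), and $x\mapsto x\otimes\J_c$ is injective on vectors of fixed length, hence on matrices read columnwise. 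Cancelling the common factor $J_{c_1}$ (resp.\ $\J_{c_2}$, $J_{c_3}$) therefore yields $A\otimes J_a=A'\otimes J_b$, $B\otimes\J_a=B'\otimes\J_b$, $C\otimes J_a=C'\otimes J_b$, which is exactly (\ref{9.1}) with $r=a$, $s=b$. The one obstacle worth flagging is the collapse of the three scalings onto $n'/n$; the rest is substitution plus this elementary cancellation lemma.
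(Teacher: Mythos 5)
Your proof is correct. The paper itself offers no argument at all --- Proposition \ref{p9.3} is prefaced only by ``It follows from definition that'', i.e.\ the authors treat the unwinding of Definitions \ref{d9.2}, \ref{d9.1}, \ref{d72.2} and \ref{d7.11} as immediate. Your ``if'' direction and the reduction of ``equivalent'' to the three componentwise relations $A\approx A'$, $B\lra B'$, $C\approx C'$ is exactly that unwinding. What you add beyond the paper is the one genuinely non-obvious step it glosses over: that the three separately witnessed relations admit a \emph{common} pair $(r,s)$. Your dimension-count argument (all three ratios $\alpha_i/\beta_i$ equal $n'/n$ because the state dimension appears in the row or column count of each identity), followed by $J_{uv}=J_u\otimes J_v$, $\J_{uv}=\J_u\otimes\J_v$ and the elementary injectivity of $X\mapsto X\otimes J_c$ and $x\mapsto x\otimes\J_c$ on objects of fixed size, settles this correctly; one could equally scale all three witnesses up to a common pair (tensoring each identity by a further $J_k$ or $\J_k$) instead of cancelling down to the reduced pair $(a,b)$, but either way the equal-ratio observation is the crux, and you identified and proved it. So your proposal is a complete proof of a statement the paper merely asserts.
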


Consider a dimension-varying system. Without loss of generality, we assume it is evolving from a model $\Sigma_1$ to another model $\Sigma_2$ over a transient period, where

\begin{align}\label{5.1}
\Sigma_1:~\dot{x}(t)=Ax(t)+Bu(t),\quad x\in \R^p;
\end{align}
and
\begin{align}\label{5.2}
\Sigma_2:~\dot{y}(t)=Ey(t)+Fu(t),\quad y\in \R^q.
\end{align}

We consider the transient dynamics of the system from starting time $t=t_0$ to ending time $t=t_e>t_0$.

To keep the dynamics of dimension-varying process a linear model, we assume the dynamics is a linear combination of the two models. That is,

\begin{itemize}
\item Assumption A1: The dynamics of dimension-varying process is
That is:
\begin{align}\label{5.3}
\begin{array}{l}
\dot{z}(t)=\mu \dot{x}(t)\lvplus (1-\mu)\dot{y}(t),\\
z(t_0)=x(t_0),\; z(t_e)=y(t_e).
\end{array}
\end{align}
\end{itemize}

We have two ways to choose $\mu$:
\begin{itemize}
\item[(i)]~~ Constant Parameter:

Choose $0 <\mu<1$ being constant, which leads to a constant linear system.
\item[(ii)]~~ Time-varying Parameter:

Choose $0\leq \mu(t)\leq 1$, and
$$
\mu(t)=\begin{cases}
1,\quad t=t_0\\
0,\quad t=t_e.
\end{cases}
$$
\end{itemize}

Let $n=p\vee q$ be the least common multiple of $p$ and $q$. Using (\ref{4.6}), we can project $\Sigma_1$ into $\R^n$ as
\begin{align}\label{5.4}
\dot{z}(t)=A_1z+B_1u,
\end{align}
where
$$
A_1=\Pi^p_nA\left((\Pi^p_n)^T\Pi^p_n\right)^{-1}(\Pi^p_n)^T,
$$
$$
B_1=\Pi^p_nB.
$$
Similarly, projecting $\Sigma_2$ into $\R^n$ yields
\begin{align}\label{5.5}
\dot{z}(t)=A_2z+B_2v,
\end{align}
where
$$
A_2=\Pi^q_nE\left((\Pi^q_n)^T\Pi^q_n\right)^{-1}(\Pi^q_n)^T,
$$
$$
B_2=\Pi^q_nF.
$$
According to (\ref{5.3}), the transient dynamics becomes
\begin{align}\label{5.6}
\dot{z}(t)=\left[\mu A_1+(1-\mu)A_2\right]z+\mu B_1u+(1-\mu) B_2v.
\end{align}

\begin{dfn}\label{d5.1} A dimension transience is properly realized if there exist $u(t)$ and $v(t)$ such that, stating from $z_0=x_0\otimes {\bf 1}_{n/p}$, the ending state of (\ref{5.6}) satisfies
\begin{align}\label{5.7}
z(t_e)=y(t_e)\otimes {\bf 1}_{n/q}\in \R^q\otimes {\bf 1}_{n/q}.
\end{align}
\end{dfn}

\begin{rem}\label{r5.2}
\begin{enumerate}
\item If the constant parameter is assumed, the parameter $\mu$ is determined by the system model. Assume $m_1$ and $m_2$ are ``formal masses"  of the two systems, then using the law of conservation of momentum, we have $\mu=\frac{m_1}{m_1+m_2}$.

\item  If the time-varying parameter is assumed, the easiest way is to assume the parameter is a linear function. That is
$$
\mu(t)=\frac{(t_e-t_0)-(t-t_0)}{t_e-t_0}.
$$
\end{enumerate}
\end{rem}

The following result is easily verifiable.

\begin{prp}\label{p5.3} A dimension transience is properly realized if $x(t_0)\otimes {\bf 1}_{n/p}$ is controllable to a point of $\R^q\otimes {\bf 1}_{n/q}$.
\end{prp}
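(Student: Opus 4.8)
The plan is to observe that Proposition~\ref{p5.3} is, essentially, Definition~\ref{d5.1} read in the language of classical controllability, so the proof amounts to matching the two formulations. First I would fix the natural embeddings $\iota_p:\R^p\ra\R^n$, $\iota_p(x):=x\otimes{\bf 1}_{n/p}$, and $\iota_q:\R^q\ra\R^n$, $\iota_q(y):=y\otimes{\bf 1}_{n/q}$, where $n=p\vee q$. Each is linear and injective (if $x\otimes{\bf 1}_{n/p}=0$, every repeated block vanishes, so $x=0$), hence a linear isomorphism onto its image; in particular $W_q:=\R^q\otimes{\bf 1}_{n/q}=\iota_q(\R^q)$ is a subspace of $\R^n$ carrying a well-defined inverse $\iota_q^{-1}:W_q\ra\R^q$. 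Note also that the initial state in Definition~\ref{d5.1} is $z_0=x(t_0)\otimes{\bf 1}_{n/p}=\iota_p(x(t_0))$.

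Next I would unwind the controllability hypothesis for the linear control system~(\ref{5.6}), namely $\dot z=[\mu A_1+(1-\mu)A_2]z+\mu B_1u+(1-\mu)B_2v$ on $\R^n$ with control pair $(u,v)$ (time-invariant if $\mu$ is constant, time-varying if $\mu=\mu(t)$). Saying that $z_0$ is controllable to a point of $W_q$ over $[t_0,t_e]$ means, by definition, that there exist controls $u(\cdot),v(\cdot)$ on $[t_0,t_e]$ and a point $\zeta\in W_q$ such that the solution of~(\ref{5.6}) with $z(t_0)=z_0$ satisfies $z(t_e)=\zeta$.

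Given such $u,v,\zeta$, I would close the argument by setting $y(t_e):=\iota_q^{-1}(\zeta)\in\R^q$, so that $z(t_e)=\zeta=y(t_e)\otimes{\bf 1}_{n/q}\in\R^q\otimes{\bf 1}_{n/q}$; this is exactly condition~(\ref{5.7}), hence those same controls witness that the dimension transience is properly realized in the sense of Definition~\ref{d5.1}. I do not anticipate a genuine obstacle: the only care needed is the injectivity of $\iota_q$, which makes the recovered terminal state $y(t_e)$ unambiguous, and a clean translation of the control-theoretic phrase ``controllable to a point of a set'' into the existential statement of Definition~\ref{d5.1}. One could add, though it is not asked, that the converse implication is equally immediate, so that ``properly realizable'' and ``$z_0$ controllable into $W_q$'' in fact coincide.
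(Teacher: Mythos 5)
Your proposal is correct and is exactly the ``easy verification'' the paper alludes to but does not write out: Proposition~\ref{p5.3} is Definition~\ref{d5.1} restated, and your unwinding of ``controllable to a point of $\R^q\otimes {\bf 1}_{n/q}$'' into the existence of controls $u,v$ on $[t_0,t_e]$ steering $z_0=x(t_0)\otimes {\bf 1}_{n/p}$ along (\ref{5.6}) to some $\zeta=y(t_e)\otimes{\bf 1}_{n/q}$, with $y(t_e)$ recovered via injectivity of the embedding, is precisely condition (\ref{5.7}). Nothing further is needed.
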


\subsection{Illustrative Examples}

In this section two examples are presented to illustrate our results. In first example constant parameter is assumed. In second example time-varying parameter is used.

\begin{exa}\label{e5.4} Consider a dimension-varying system, which has two models as
\begin{equation}\label{sys1}
\Sigma_1:\left\{\begin{split}
\dot{x}_1 &= x_2\\
\dot{x}_2 &= u;
\end{split}
\right.
\end{equation}
\begin{equation}\label{sys2}
\Sigma_2:\left\{\begin{split}
\dot{y}_1 &= y_3\\
\dot{y}_2 &= v\\
\dot{y}_3 &= y_2.
\end{split}
\right.
\end{equation}

Assume during the period $[0,10]$ seconds, the system runs in $\Sigma_1$, whereas at the tenth second, the system changes and evolves in the transient dynamics. Then, after one second, the system arrives at $\Sigma_2$. The initial time and the end time of the transient dynamics are denoted as $t_0=10$ and $t_e=11$ respectively. Let $x(0)=(0,0)^\mathrm{T}$, $x(t_0)=(1,-1)^{\mathrm{T}}$, $y(t_0) = (0,1,1)^\mathrm{T}$, $\mu=0.5$ (\sl i.e, $m_1=m_2$).

Here we have $p=2$ and $q=3$, hence $n=p\vee q=6$. Using (\ref{3.6}) and (\ref{4.6}), the projective systems of $\Sigma_1$ and $\Sigma_2$, denoted by $\Sigma^{\pi}_1$ and $\Sigma^{\pi}_2$, respectively, are
$$
\dot{z}=A^{\pi}_1z+B^{\pi}_1u;
$$
and
$$
\dot{z}=A^{\pi}_2z+B^{\pi}_2v,
$$
where
$$
\begin{array}{ccl}
A^{\pi}_1&=&\Pi^2_6A_1\left[(\Pi^2_6)^T(\Pi^2_6)\right]^{-1}(\Pi^2_6)^T\\
~&=&\frac{1}{3}\begin{bmatrix}
0&0&0&1&1&1\\
0&0&0&1&1&1\\
0&0&0&1&1&1\\
0&0&0&0&0&0\\
0&0&0&0&0&0\\
0&0&0&0&0&0\\
\end{bmatrix};
\end{array}
$$
$$
B^{\pi}_1=\Pi^2_6B_1=[0,0,0,1,1,1]^T.
$$
$$
\begin{array}{ccl}
A^{\pi}_2&=&\Pi^3_6A_2\left[(\Pi^3_6)^T(\Pi^3_6)\right]^{-1}(\Pi^3_6)^T\\
~&=&\frac{1}{2}\begin{bmatrix}
0&0&0&0&1&1\\
0&0&0&0&1&1\\
0&0&0&0&0&0\\
0&0&0&0&0&0\\
0&0&1&1&0&0\\
0&0&1&1&0&0\\
\end{bmatrix};
\end{array}
$$
$$
B^{\pi}_2=\Pi^2_6B_2=[0,0,1,1,0,0]^T.
$$
Then the transient dynamics becomes
\begin{align}\label{5.8}
\dot{z}=A^*z+B^*_1u+B^*_2v,
\end{align}
where
$$
\begin{array}{ccl}
A^*&=&\mu A^{\pi}_1+(1-\mu) A^{\pi}_2\\
~&=&\begin{bmatrix}
0 & 0 & 0 & 1/6 & 5/12 & 5/12 \\
0 & 0 & 0 & 1/6 & 5/12 & 5/12 \\
0 & 0 & 0 & 1/6 & 1/6 & 1/6 \\
0 & 0 & 0 & 0 & 0 & 0 \\
0 & 0 & 1/4 & 1/4 & 0 & 0 \\
0 & 0 & 1/4 & 1/4 & 0 & 0 \\
\end{bmatrix}.
\end{array}
$$
$$
\begin{array}{l}
B^*_1=\mu B^{\pi}_1=[0,0,0,1/2,1/2,1/2]^T\\
B^*_2=(1-\mu) B^{\pi}_2=[0,0,1/2,1/2,0,0]^T.
\end{array}
$$

\begin{align}\label{5.9}
z(t_0)=\Pi^2_6 x(t_0).
\end{align}

When $t\in[0,t_0]$, we choose a PD controller ($K_p=10$, $K_d=5$) to control system (\ref{sys1}) to reach $x(t_0)=(1,-1)$. Then, during $[t_0,t_e]$, to verify whether the dimension transience can be properly realized, we may choose
$$
z(t_e)=[1,1,2,2,1,1]^T\in \R^3\otimes {\bf 1}_2
$$
to see if the system (\ref{5.8}) is controllable from $z(t_0)$ to $z(t_e)$. Then we deign an open-loop control law for the transient system. When $t\in[t_e,25]$, we design a state-feedback controller to stabilize the system (\ref{sys2}). The time response of the system according to the three period, $[0,10]$, $[10,11]$, and $[11,25]$, are shown in Fig. \ref{fig1}, Fig. \ref{fig2}, and Fig. \ref{fig3}, respectively. Furthermore, the whole trajectory in the state space with three, actually from two-dimension to the three-dimension, is as shown in Fig. \ref{fig4}, where the dashed line represents the projective system of the transient system (\ref{5.8}) in $\mathbb{R}^3$. The time response of the projective system of the system (\ref{5.8}) is shown in Fig. \ref{fig5}. It should be noted that the trajectory during the transient period is re-coordinated as shown in the note due to the large scale.

\begin{figure}[!htb]
  \centering
  \includegraphics[width=9.5cm,height=5.5cm]{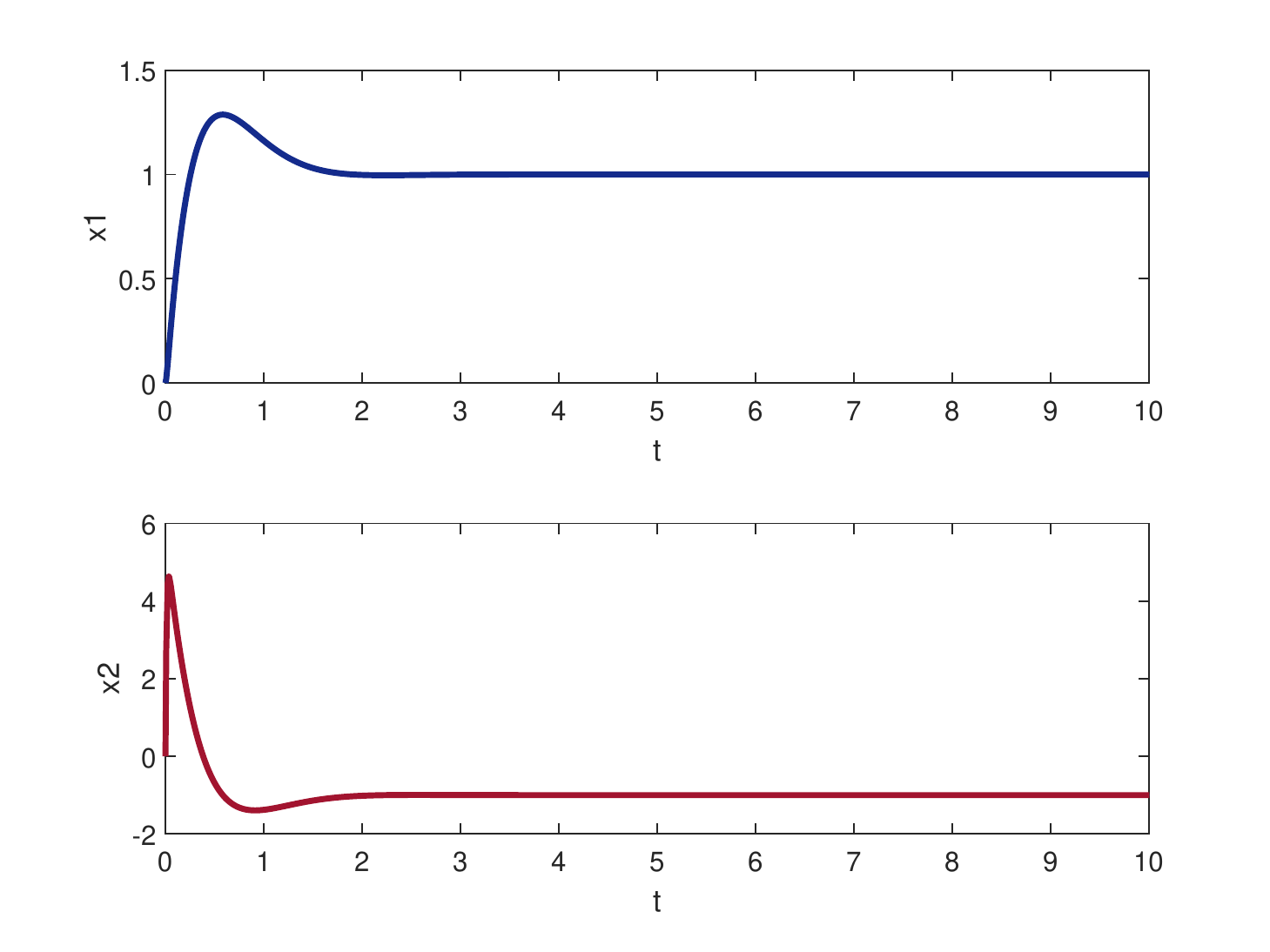}
  \caption{ The profile of states of the system (\ref{sys1}) }\label{fig1}
\end{figure}
\begin{figure}[!htb]
  \centering
  \includegraphics[width=9.6cm,height=5.5cm]{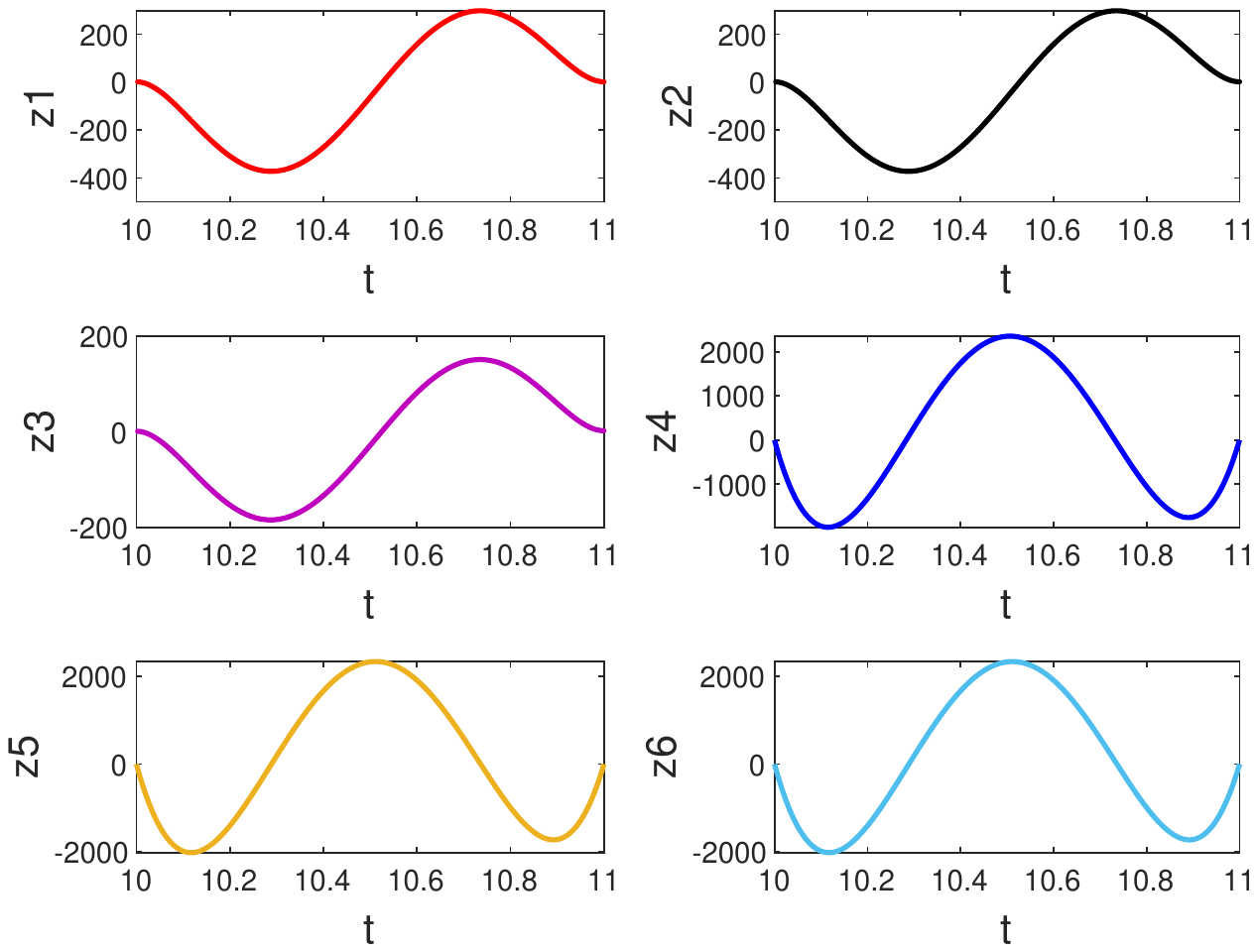}
  \caption{The profile of states of system (\ref{5.8})}\label{fig2}
\end{figure}

\begin{figure}[!htb]
  \centering
  \includegraphics[width=9.6cm,height=5.5cm]{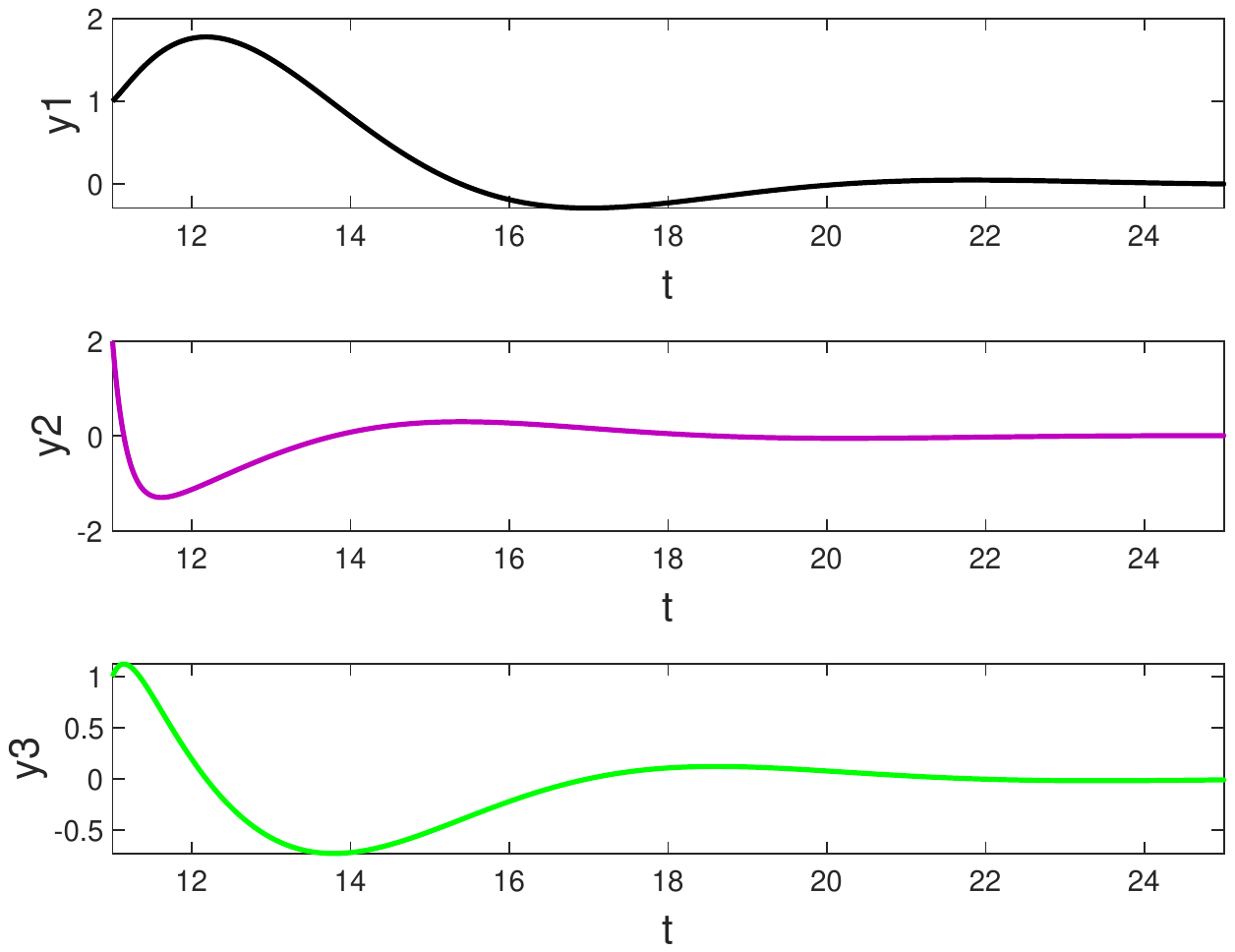}
  \caption{ The profile of states of the system (\ref{sys2}) }\label{fig3}
\end{figure}

\begin{figure}[t]
  \centering
  \includegraphics[width=9.5cm,height=5.5cm]{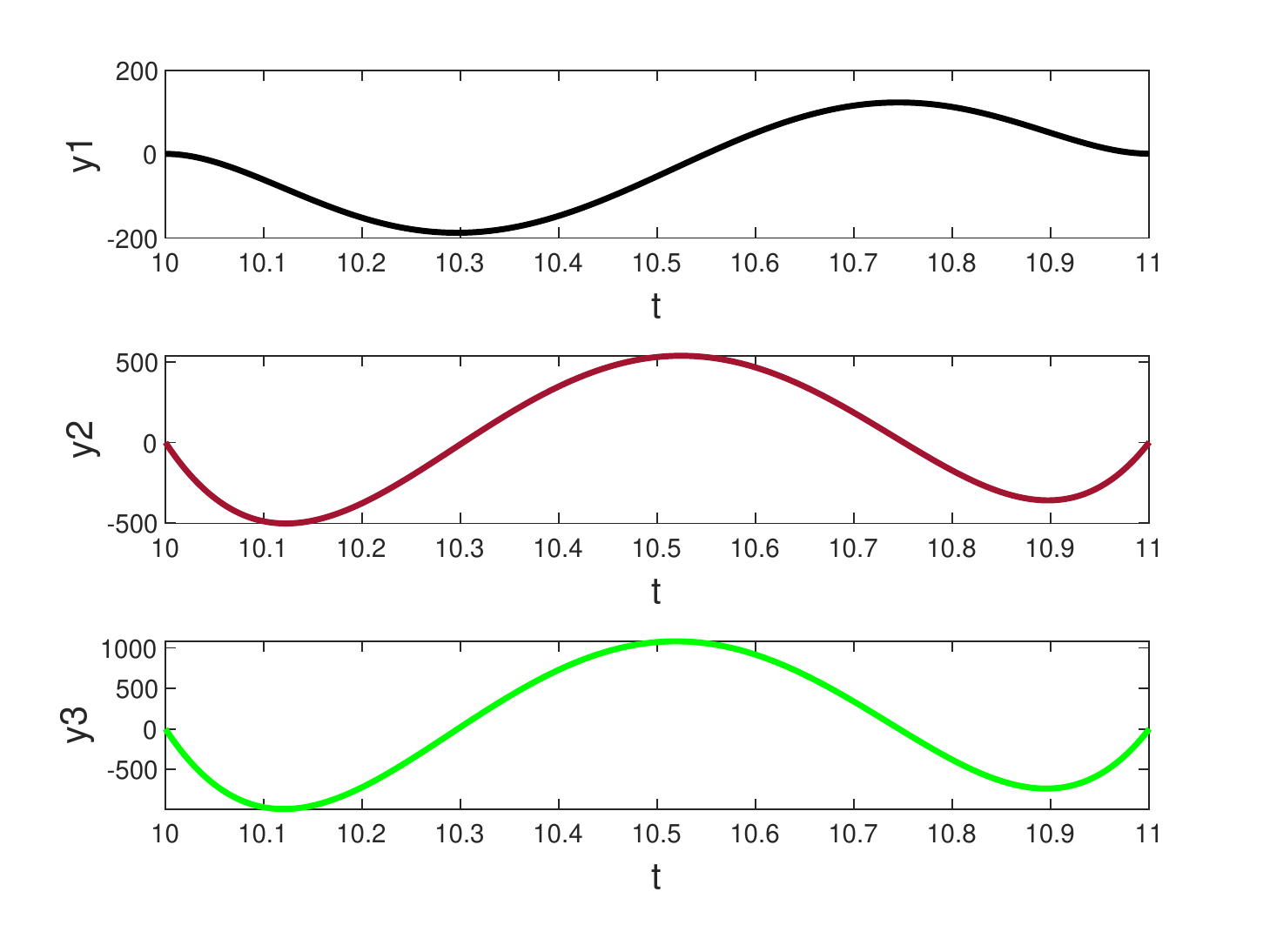}
  \caption{ The profile of states of the projective system of (\ref{5.8}) in $\mathbb{R}^3$ }\label{fig5}
\end{figure}

\begin{figure}[!htb]
  \centering
  \includegraphics[scale=0.32]{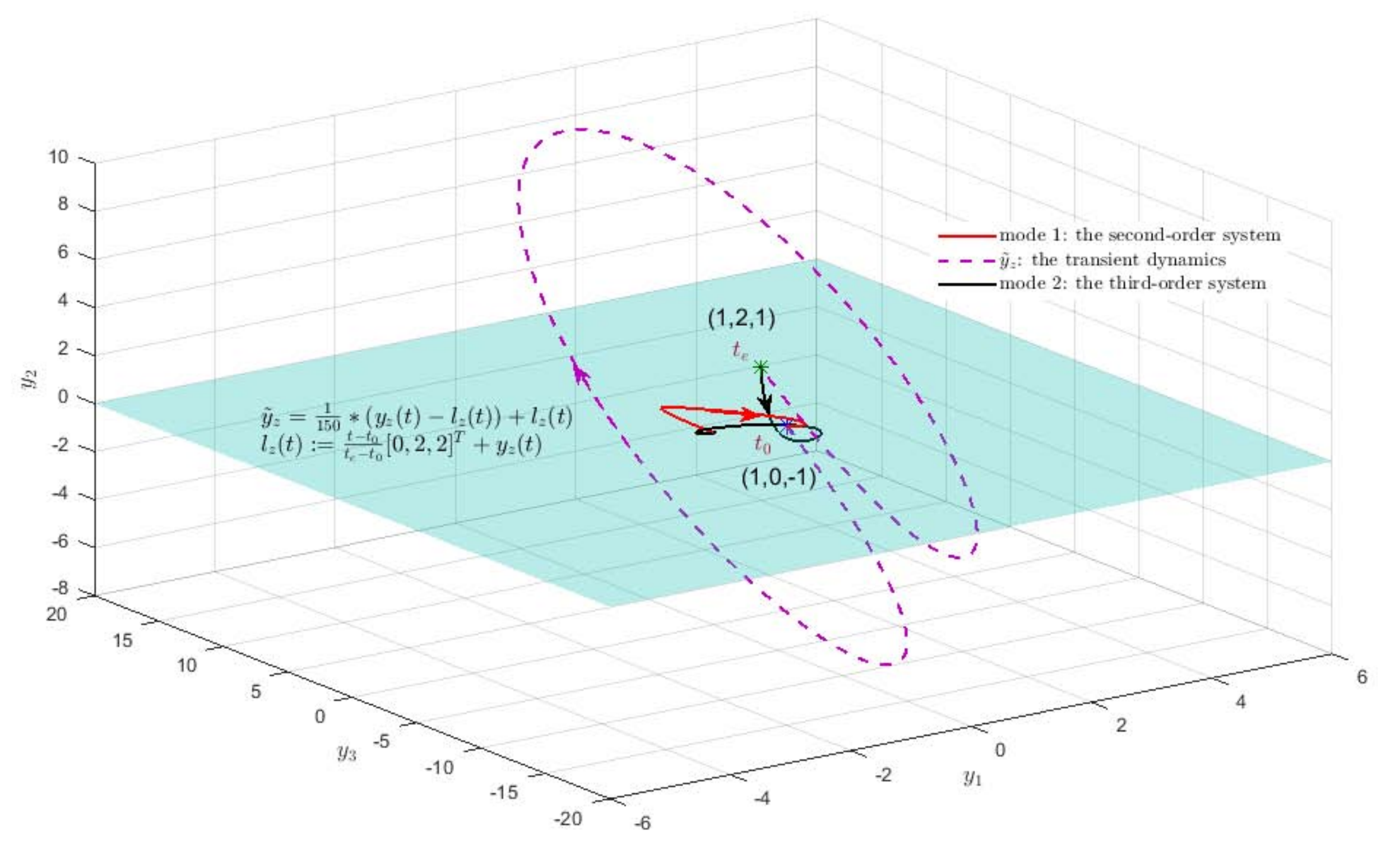}
 \caption{ The trajectory of the cross-dimension system }\label{fig4}
\end{figure}

\end{exa}

\begin{exa}\label{e5.5} The clutch is a typical device widely used in automotive engineering. Dynamics of a clutch system is illustrated in Figure. \ref{clutch}, where only two inertia system is sketched for sake of simplicity. In Figure. \ref{clutch}, the left side of the clutch is connected to the power source such as combustion engine, electric motor, etc. And the right side is connected to the load, usually the input axis of the transmission box connected to the differential gear and the wheel along the powertrain. Obviously, when the clutch is disengaged the motion of the system consists of two rotational mass, and the motion of the two inertia does not couple each other. The dynamics can be represented with two decoupled rotational dynamics:
\begin{equation}\label{e2sys1}
\Sigma_1:\left\{\begin{split}
J_i\dot{\omega}_i &= -d_i\omega_i+\tau_i \\
J_o\dot{\omega}_o &= -d_o\omega_o-\tau_o,
\end{split}
\right.
\end{equation}
where $\omega_i$ and $\omega_o$ denote the rotational speed of the axis, $\tau_i$ and $\tau_o$ denote the active torque generated by the power source and the load torque which is reacting torque to force the load, respectively. $d_i$ and $d_o$ denote the friction coefficient of the corresponding axis.

On the other hand, when the clutch is engaged the two axis connected rigidly and rotational motion becomes one inertia and one dimensional dynamics.

\begin{equation}\label{e2sys2}
\Sigma_2:(J_i+J_o)\dot{\omega}_o =-(d_i+d_o)\omega_o+\tau_i-\tau_o,
\end{equation}
and $\omega_i = \omega_o$.

\begin{figure}[!htb]
  \centering
  \includegraphics[scale=0.5]{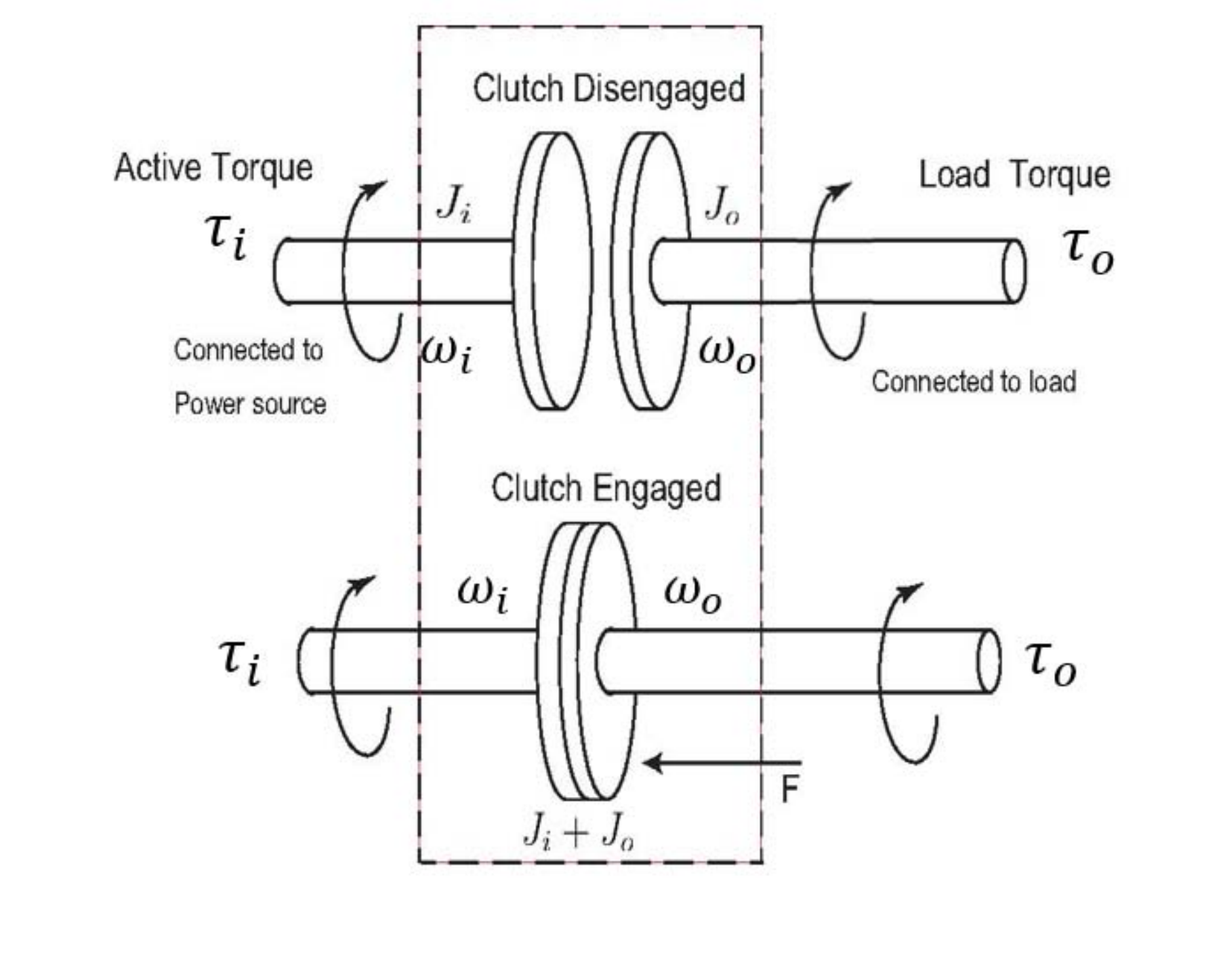}
  \caption{The clutch system}\label{clutch}
\end{figure}

It means that the clutch system can be described as two dimensional system or one dimensional system according to the state of the clutch. If the state of clutch is ''disengaged'', the system is two dimension, and the state of ''engaged'' leads to one dimension. The transient process of this system from ''disengaged'' to ''engaged'' is conducted by adjusting the interacting torque $\tau_c$ between the two inertia, {\sl i.e.}, during the transient process the total torque acts on $J_i$ is $\tau_i-\tau_c$, and on $J_o$ is $\tau_c-\tau_o$, respectively. Adjusting these total torques by $\tau_c$ will complete the transient from two mass to one combined mass system. In automotive practice, it is physically implemented by external force $F$ that acts on the clutch disc since the interacting torque is generated as $\tau_c=F c R_a\psi(\omega_i,\omega_o)$ by this operation, in which $c$ is the friction coefficient of the clutch surface material, $R_a$ is the active radius of the clutch plates, and $\psi(\omega_i,\omega_o)$ is a nonlinear function, see \cite{ser04,tem018}. Usually, the transient process is requested to be finished quickly, less than $0.8\sim1.0$ sec. Equivalently, this clutch operating process is nothing but adjusting the total torque acting on the two inertia to get complete synchronized speed for connecting rigidly.

This process can be described with the proposed model of varying dimensional system. Here we have $p=2$ and $q=1$, hence $n=p\vee q=2$. Using (\ref{3.6}) and (\ref{4.6}), the projective systems of $\Sigma_1$ and $\Sigma_2$, denoted by $\Sigma^{\pi}_1$ and $\Sigma^{\pi}_2$, respectively, are
$$
\dot{z}=A^{\pi}_1z+B^{\pi}_1u;
$$
and
$$
\dot{z}=A^{\pi}_2z+B^{\pi}_2u,
$$
where
$$
\begin{array}{ccl}
A^{\pi}_1=\begin{bmatrix}
-\frac{d_i}{J_i}&0\\
0&-\frac{d_o}{J_o}\\
\end{bmatrix};
\end{array}
$$
$$
\begin{array}{ccl}
B^{\pi}_1=\begin{bmatrix}
\frac{1}{J_i}&0\\
0&-\frac{1}{J_o}\\
\end{bmatrix};
\end{array}
$$
$$
\begin{array}{ccl}
A^{\pi}_2&=&\Pi^1_2A_2\left[(\Pi^1_2)^T(\Pi^1_2)\right]^{-1}(\Pi^1_2)^T\\
~&=&\frac{1}{2}\begin{bmatrix}
-\frac{d_i+d_o}{J_i+J_o}&-\frac{d_i+d_o}{J_i+J_o}\\
-\frac{d_i+d_o}{J_i+J_o}&-\frac{d_i+d_o}{J_i+J_o}\\
\end{bmatrix};
\end{array}
$$
$$
\begin{array}{ccl}
B^{\pi}_2&=&\Pi^1_2B_2\\
~&=&\begin{bmatrix}
\frac{1}{J_i+J_o}&-\frac{1}{J_i+J_o}\\
\frac{1}{J_i+J_o}&-\frac{1}{J_i+J_o}\\
\end{bmatrix};
\end{array}
$$
and
\begin{align*}
u=\left[\begin{array}{c}
                  \tau_i \\
                  \tau_o
                \end{array}
                \right].
\end{align*}

Then the transient process from $\Sigma_1$ to $\Sigma_2$ can be represented as
\begin{align}\label{e25.8}
\Sigma^*: ~\dot{z}=A^*z+B^*u,
\end{align}
by defining
\begin{equation*}
\begin{split}
A^*=&(1-\mu)\begin{array}{cc}
\left[
               \begin{array}{cc}
                 -\frac{d_i}{J_i} & 0 \\
                 0 & -\frac{d_o}{J_o} \\
               \end{array}
             \right]\end{array}+\frac{\mu}{2}\left[\begin{array}{cc}
                   -\frac{d_i+d_o}{J_i+J_o}& -\frac{d_i+d_o}{J_i+J_o} \\
                   -\frac{d_i+d_o}{J_i+J_o} & -\frac{d_i+d_o}{J_i+J_o} \\
                 \end{array}\right],
\end{split}
\end{equation*}
\begin{equation*}
\begin{array}{cc}
B^*=&(1-\mu)\left[
               \begin{array}{cc}
                \frac{1}{J_i}& 0 \\
                   0 & \frac{-1}{J_o} \\
               \end{array}
             \right]\end{array}\\
+\mu\left[\begin{array}{cc}
                 \frac{1}{J_i+J_o} & \frac{-1}{J_i+J_o} \\
                 \frac{1}{J_i+J_o} & \frac{-1}{J_i+J_o} \\
                 \end{array}\right].
\end{equation*}
with $\mu = (t-t_0)/T$, where $T$ is the period of the transient process. This leads to $\Sigma^* = \Sigma_1$ when $t=t_0$ ($\mu=0$) and $\Sigma^*=\Sigma_2$ when $t = T-t_0$ ($\mu=1$), respectively.

To do the simulation, we choose $J_i=0.2kgm^2$, $J_o=0.7753kgm^2$, $d_i=0.03Nms$, $d_o=0.03Nms$, and $T=0.86$. The initial time of the transient dynamics is denoted as $t_0=0$, and the terminal time is denoted as $t_1=1$. Let $(\omega_i(t_0),\omega_o(t_0))=(150,0)$  $\omega_i(t_1)=\omega_o(t_1)=25$, which means the clutch is engaged at $t_1$. We design a control law such that the trajectory of the closed-loop system composed of (\ref{e25.8}) and the control law starting from $(\omega_i(t_0),\omega_o(t_0))$ can reach approximately $(\omega_i(t_1),\omega_o(t_1))$ when $t=t_1$. Simulation result is shown in Fig. \ref{e2r1} for the time response of the closed-loop system.
\begin{figure}[!htb]
  \centering
  \includegraphics[width=9.5cm,height=5.5cm]{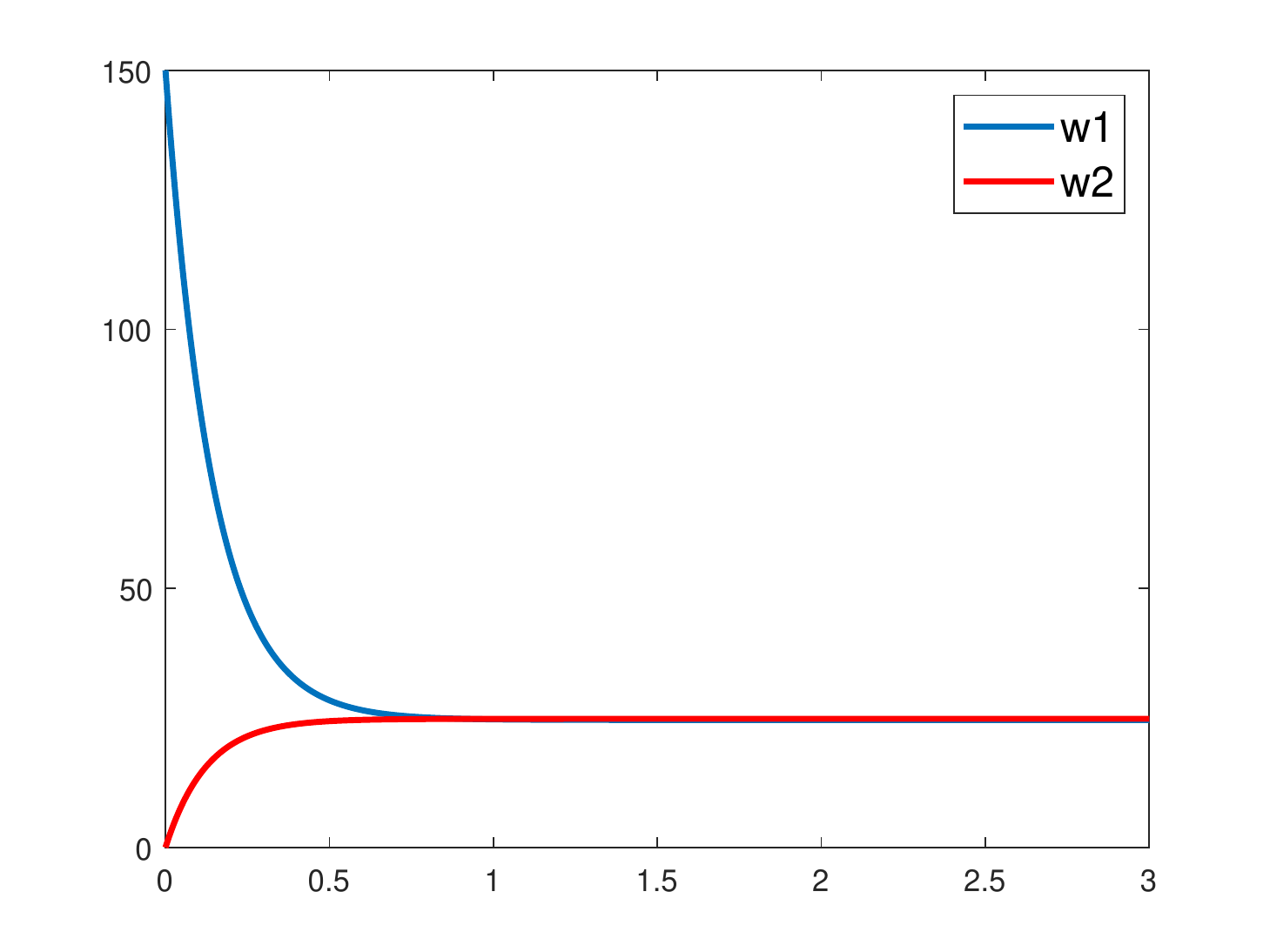}
  \caption{ The profile of states of the system (\ref{e25.8}) }\label{e2r1}
\end{figure}

\end{exa}

\section{Conclusion}

The problem of modeling  dimension-varying dynamic process of linear systems is investigated. First, the Euclidian spaces of various dimensions are put together to form a state space of dimension free systems. An cross dimensional addition is introduced, which provides a pseudo vector space structure on this dimension free state space. The inner product is then introduced, which suggests norm and distance. The metric topology follows, which makes the dimension free state space a path-wise connected topological space. The projection of vectors and then linear systems on different dimensional spaces are proposed. Set of matrices with different dimensions is considered as the general linear mappings on dimension free state space. Semi-tensor product is introduced on set of matrices, which turns the set into a semi-group. Finally, the S-system is obtained as the action of the semi-group of dimension-varying matrices on the dimension free vector space.

To make a trajectory ``cross" different dimensional Euclidian spaces an equivalence relation is proposed, which is basically deduced from the distance. Then the quotient space is obtained, which is a vector, metric, and Hausdorff space. A dimension-varying system can be properly projected on this quotient space, and a dynamic system on quotient space can be lifted to to Euclidean space of various dimensions. This project-lift process yields a technique to model dynamics of dimension-varying process.

Two examples are presented to demonstrate the design technique. One is a numerical example, which shows (to be completed). The other one is an engineering application. It demonstrated the control design technique for dimension-varying process of clutch system. A comparison with traditional method is also presented.

There are several interesting and challenging problems remain for further investigation. Some of them are as follows:
\begin{itemize}
\item[(i)]~~ What is the relationship of a linear (control) system with its projected system? Do they share some common properties?
\item[(ii)]~~ What is the practically meaningful model of the dynamics of dimensional process? To make the dynamics linear we propose to use a linear combination of the pre and after dynamic models. Is this approximation reasonable?
\item[(iii)]~~How to model large scale dimension-varying systems, such as internet?
\item[(iv)]~~ How to extend this approach to nonlinear case?
\end{itemize}


\begin{thebibliography}{00}


\bibitem{abr78} R. Abraham, J. Marsden, {\it Foundations of Mechanics}, 2nd Ed.,Benjamin/Cummings Pub., London, 1978.
%
\bibitem{ahs87} J. Ahsan, Monoids characterized by their quasi-injective S-systems, {\it Semigroup Fourm}, 36, 285-292, 1987.
%
\bibitem{bur81} S. Burris, H.P. Sankappanavar, {\it A Course in Universal Algebra}, Springer, New York, 1981.
%
\bibitem{che11} D. Cheng, H. Qi, Z. Li, {\it Analysis and Control of Boolean Networks - A Semi-tensor Product Approach}, Springer, London, 2011.
%
\bibitem{che12} D. Cheng, H. Qi, Y. Zhao, {\it An Introduction to Semi-tensor Product of Matrices and Its Applications}, World Scientific, Singapore, 2012.
%
\bibitem{chepr1}  D. Cheng, On equivalence of Matrices, {\it Asian J. Mathematics}, accepted, (preprint: arXiv:1605.09523).
\bibitem{dug66} J. Dugundji, {\it Topology}, Allyn and Bacon, Inc., Boston, 1966.
%
\bibitem{for16} E. Fornasini, M.E. Valcher, Recent developments in Boolean networks control, {\it J. Contr. Dec.}, Vol. 3, No. 1, 1-18, 2016.
%
\bibitem{hor85} R.A. Horn, C.R. Johnson, {\it Matrix Analysis}, Cambridge Univ. Press, Cambridge, 1985.
\bibitem{how95} J.M. Howie, {\it Fundamentals of Semigroup Theory}, Clarendon Press, Oxford, 1995.
\bibitem{hua57} G. Hua, {\it Fundation of Number Theory}, Science Press, Beijing, 1957 (in Chinese).
%
\bibitem{hua95} R. Huang, Z. Ye, An improved dimension-changeable merix method of simulating the insect population dynamics, {\it Entomological Knowledge}, Vol. 32, No. 3, 162-164, 1995 (in Chinese)
%
\bibitem{jan84} K. Jamich, {\it Topology}, Springer-Verlag, New York, 1984.

\bibitem{jia14} P. Jiao, Y. Hao, J. Bin, Modeling and control of spacecraft formation based on impulsive switching with variable dimensions, {\it Computer Simulation}, Vol. 31, No. 6, 2014, 124-128.
%
\bibitem{kak99} M. Kaku, Introduction to Supersting and M-Theory, 2nd Ed., Springer-Verlag, New York, 1999.
%
\bibitem{liu08} Z. Liu, H. Qiao, {\it S-System Theory of Semigroup}, 2nd Ed., Science Press, Beijing, 2008 (in Chinese).
\bibitem{lu17} J. Lu, H. Li, Y. Liu, F. Li, Survey on semi-tensor product method with its applications in logical networks and other finite-valued systems, {\it IET Contr. Thm\& Appl.}, Vol. 11, No. 13, 2040-2047, 2017.
%
\bibitem{mac97} J. Machowski, J.W. Bialek, J.R. Bumby, {\it Power System Dynamics and Stability}, John Wiley and Sons, Inc., Chichester, 1997.
\bibitem{muh16} A. Muhammad, A. Rushdi, F.A. M. Ghaleb, A tutorial exposition of semi-tensor products of matrices with a stress on their representation of Boolean function, {\it JKAU Comp. Sci}, Vol. 5, 3-30, 2016.
%
\bibitem{pak17} A. Pakniyat, P.E. Caines, On the relation between the minimum principle and dynamic programming for classical and hybrid systems, {\it IEEE Trans. Aut. Contr.}, Vol. 62, No. 9, 4347-4362, 2017.
\bibitem{pak17b} A. Pakniyat, P.E. Caines, Hubrid optimal control of an electric vehicle with a dual-planetary transmission, {\it Nonlinear Analysis: Hybrid Systems}, Vol. 25, 263-282, 2017.
\bibitem{pas04} R. Pastor-Satprras, A. Vespignani, {\it Evolution and Structure of the Internet, A Statistical Physics Approach}, Cambridge Univ. Press, Cambridge, 2004.
\bibitem{ser04} A. Serrarens, M. Dassen, M. Steinbuch,   Simulation and control of an automotive dry clutch, {\it American Control Conference}, 2004. Vol. 5, 4078-4083, Proceedings of the 2004.
\bibitem{tem018} R. Temporelli, M. Boisvert, P. Micheau, Accurate Clutch Slip Controllers During Vehicle Steady and Acceleration States, {\it IEEE/ASME Transactions on Mechatronics}, Vol. 23, No. 5, 2078-2089, 2018.
\bibitem{vid11} M. Vidal, M.E. Cusich, A.L. Barabasi, Interactome networks and human disease, {\it Cell}, Vol. 144, No. 6, 986-995, 2011.
\bibitem{wan95} R. Huang, Z. Ye, An improved dimension-changeable matrix model of simulating the insect population dynamics, {\it Entomological Knowledge}, Vol. 32, No. 3, 162-164, 1995.
%
\bibitem{won74} W.M. Wonham, {\it Linear Multivariabel Control - A Geometric Approach}, Springer-Verlag, New York, 1974.
%
\bibitem{xu81} R. Xu, L. Liu, Q. Zhu, J. Shen, Application of a dimension-changeable matrix model on the simulation of the population dynamics of greenhouse whiteflies, {\it ACTA Ecologica Sinica}, Vol. 1, No. 2, 147-158, 1981.
%
\bibitem{yan14} H. Yang, B. Jiang, V. Cocquempot, {\it Stabilization of Switched Nonlinear Systems with Unstable Modes}, Chapter 4. Switched Nonlinear Systems with Varying states, Springer, Switzerland, 2014.
\end{thebibliography}
\end{document}